\documentclass[10pt]{amsart}
\usepackage{latexsym}
\usepackage{amsmath}
\usepackage{amssymb}
\usepackage{mathrsfs}
\usepackage{graphicx}
\usepackage{color}
\usepackage{pgfpages}
\usepackage{ifthen}
\usepackage{leftidx,tensor}
\usepackage[T1]{fontenc}
\usepackage[latin1]{inputenc}
\usepackage{mathtools}
\usepackage{comment}
\usepackage{dsfont}
\usepackage[nocompress]{cite}

\usepackage[shortlabels]{enumitem}
\usepackage{aliascnt}
\usepackage[bookmarks=true,pdfstartview=FitH, pdfborder={0 0 0}, colorlinks=true,citecolor=red, linkcolor=blue]{hyperref}
\usepackage{bbm}
\usepackage{nicefrac}

\theoremstyle{plain}
\newtheorem{thm}{Theorem}[section]
\newaliascnt{cor}{thm}
\newaliascnt{prop}{thm}
\newaliascnt{lem}{thm}
\newtheorem{cor}[cor]{Corollary}

\newtheorem{prop}[prop]{Proposition}
\newtheorem{lem}[lem]{Lemma}
\aliascntresetthe{cor}
\aliascntresetthe{prop}
\aliascntresetthe{lem} 
\theoremstyle{definition}
\newaliascnt{defn}{thm}
\newaliascnt{asu}{thm}
\newaliascnt{con}{thm}
\newtheorem{defn}[defn]{Definition}
\newtheorem{asu}[asu]{Assumption}

\aliascntresetthe{defn}
\aliascntresetthe{asu}
\aliascntresetthe{con}
\newcounter{stp}
\newcounter{stpi}
\newcounter{stpci}
\newcounter{stpiii}

\newtheorem{step}[stp]{Step}

\theoremstyle{thm}
\newaliascnt{rem}{thm}
\newaliascnt{exa}{thm}
\newaliascnt{masu}{thm}
\newaliascnt{nota}{thm}
\newaliascnt{sett}{thm}
\newtheorem{rem}[rem]{Remark}

\aliascntresetthe{rem}
\aliascntresetthe{exa}
\aliascntresetthe{masu}
\aliascntresetthe{nota}
\aliascntresetthe{sett}
\numberwithin{equation}{section}

\setlist[enumerate]{font = \normalfont}

\newcommand{\eps}{\varepsilon}
\renewcommand{\bar}[1]{\overline{#1}}

\newcommand{\R}{\mathbb{R}}
\newcommand{\C}{\mathbb{C}}
\newcommand{\N}{\mathbb{N}}

\newcommand{\E}{\mathbb{E}}

\newcommand{\rC}{\mathrm{C}}
\newcommand{\rL}{\mathrm{L}}
\newcommand{\rW}{\mathrm{W}}
\newcommand{\rH}{\mathrm{H}}
\newcommand{\rB}{\mathrm{B}}
\newcommand{\rD}{\mathrm{D}}

\newcommand{\rX}{\mathrm{X}}

\newcommand{\rF}{\mathrm{F}}

\newcommand{\rLq}{\rL^q}
\newcommand{\rLp}{\rL^p}

\newcommand{\Hinfty}{\mathcal{H}^\infty}

\renewcommand{\div}{\mathrm{div} \,}

\newcommand{\dk}[1]{\partial_{#1}}
\newcommand{\dt}{\dk{t}} 
 
\renewcommand{\d}{\mathrm{d}}

\begin{document}

\title[Free-boundary value problem for stochastic compressible Navier--Stokes equations]{Noise-Driven Free Boundaries in the Compressible Navier-Stokes Equations}

\author{Gianmarco Del Sarto}
\address{Technische Universit\"{a}t Darmstadt\\
Fachbereich Mathematik\\
	Schlossgartenstr.\ 7\\
	64289 Darmstadt\\
	Germany}
\email{delsarto@mathematik.tu-darmstadt.de}

\author{Matthias Hieber}
\address{Technische Universit\"at Darmstadt\\
	Fachbereich Mathematik\\
	Schlossgartenstr.\ 7\\
	64289 Darmstadt\\
	Germany}
\email{hieber@mathematik.tu-darmstadt.de}

\author{Tarek Z\"{o}chling}
\address{Technische Universit\"at Darmstadt\\
	Fachbereich Mathematik\\
	Schlossgartenstr.\ 7\\
	64289 Darmstadt\\
	Germany}
\email{zoechling@mathematik.tu-darmstadt.de}
\subjclass[2020]{35Q30, 35R60, 35R35, 60H15, 76N10}
\keywords{Compressible Navier-Stokes Equations, Free boundary value problem, Transport noise, Stochastic Lagrangian Flow}

\maketitle
\begin{abstract}
A stochastic free-boundary problem for the three-dimensional barotropic compressible Navier--Stokes equations is studied. The main feature of the model is that the free boundary is transported by a Stratonovich stochastic flow, so that the noise enters the kinematic boundary condition and hence the evolution of the moving domain. An additional It\^o forcing in the momentum equation is also allowed.
The problem is transformed by a stochastic Lagrangian map generated by the velocity and the transport vector fields. In these coordinates the density is represented through the Jacobian of the flow, and the remaining system is solved by combining stochastic maximal regularity, deterministic \(\rL^p\)-\(\rL^q\) estimates, and a localized contraction argument. Local pathwise well-posedness is obtained up to an a.s. positive stopping time, with strictly positive density and pathwise uniqueness.
\end{abstract}

\section{Introduction}\label{sec:intro}
\noindent
Free-boundary problems for viscous fluids describe situations in which the region occupied by the fluid is itself unknown and evolves together with the fluid motion. Such problems arise naturally in the study of free surfaces, liquid drops, interfaces with vacuum, and moving-boundary models in continuum mechanics. From the mathematical point of view, the main difficulty is that the partial differential equations have to be solved on a time-dependent domain whose boundary is coupled to the solution itself.

\smallskip

\noindent
The deterministic theory of free-boundary problems for the Navier--Stokes equations has a long and rich history. For incompressible fluids, fundamental contributions go back to the work of Solonnikov and have since been developed in many directions using energy methods, Lagrangian coordinates, and maximal regularity techniques; see, for instance, \cite{Solonnikov,PadulaSolonnikov,ShibataFreeBoundary,ShibataMaxReg}. For a survey of the classical incompressible theory, we also refer to the handbook chapter of Denisova and Solonnikov \cite{SolonnikovDenisovaHandbook}. Related incompressible two-phase free-boundary problems are treated in the monograph of Pr\"uss and Simonett \cite{PruessSimonettBook} and \cite{KoehnePruessWilke}. For compressible viscous fluids, the problem is even more delicate because the motion of the density has to be coupled with the evolution of the free surface. In this direction, we mention in particular the works of Solonnikov--Tani as well as further developments by Enomoto and Shibata; see \cite{SolonnikovTani,EnomotoBelowShibata,EnomotoShibata}. For an overview of local and global solvability results for compressible free-boundary problems near equilibria, we refer to the handbook \cite{DenisovaSolonnikovHandbook}. In these works, a central role is played by a transformation to a fixed reference domain and by estimates for the corresponding linearized free-boundary problem.

\smallskip

\noindent
In contrast, stochastic effects in free-boundary and moving-boundary problems are much less understood. Random perturbations are relevant in many physical situations, for instance when unresolved microscopic effects, irregular environmental forcing, or random fluctuations influence the motion of the fluid or of the boundary. Several recent works have started to address stochastic moving-boundary problems in the related context of fluid--structure interaction. We mention, for example, the work of Kuan and \v{C}ani\'c on stochastic fluid--structure interaction models with noise acting on the structure, as well as subsequent developments and recent work on fluid--structure interaction with transport noise and randomly moving boundaries; see \cite{KuanCanicJDE,KuanCanicJMFM,KuanOhCanic,BreitFSI}. These results show that stochastic perturbations in coupled fluid-boundary systems lead to genuinely new analytical difficulties.

\smallskip

\noindent
The aim of the present paper is to study a stochastic free-boundary problem for the barotropic compressible Navier--Stokes equations. The fluid occupies an unknown moving domain \(\Omega_t\subset\mathbb R^3\), and the unknowns are the density, the velocity, and the free boundary \(\Gamma_t=\partial\Omega_t\). The main feature of the model is that the stochastic perturbation is tied to the motion of the domain itself. More precisely, we include Stratonovich transport noise, which represents a random advective component in the motion of fluid particles. Since the free boundary is a material surface, this random transport also has to enter the kinematic law for the interface.

\smallskip

\noindent
In the deterministic free-boundary problem, particles that start on the interface remain on the interface, and the boundary is transported by the fluid velocity. In the present stochastic setting, the particle motion contains both the velocity field and the random transport field. Consequently, the kinematic boundary condition is given by
\[
\d x_t
=
u(t,x_t)\,\d t
+
\sum_{k=1}^K Q_k(x_t)\circ\d W_t^k \ \text{ for } \  x_t\in\Gamma_t,
\]
where \(Q_k\) are prescribed transport vector fields and \(W^k\) are independent Brownian motions. Thus, the free boundary is transported by the full stochastic flow, not only by the velocity field \(u\). This distinguishes the problem from stochastic fluid models in which the noise appears only as an external force on a fixed or deterministically moving domain. We also allow for an additional It\^o forcing in the momentum equation, but the central point of the model is the coupling between transport noise and the free-boundary motion.

\smallskip

\noindent
Our main result proves local-in-time well-posedness of this stochastic free-boundary problem in a strong pathwise sense. More precisely, for initial data with strictly positive density and satisfying the natural compatibility condition at the initial boundary, we construct an a.s. positive stopping time \(\tau\) and a unique local strong solution on \([0,\tau]\). The density remains strictly positive, and the free boundary satisfies the stochastic kinematic condition above. In particular, the result gives a stochastic counterpart of the deterministic local well-posedness theory for compressible viscous fluids with free boundary.

\smallskip

\noindent
The proof is based on a stochastic Lagrangian transformation. More precisely, for a given transformed velocity \(\bar u\) and $y \in \Omega_0$, we consider the flow
\[
\left\{
\begin{aligned}
\d \rX(t,y)
&=
\bar u(t,y)\,\d t
+
\sum_{k=1}^K Q_k(\rX(t,y))\circ \d W_t^k,
\\
\rX(0,y)&=y.
\end{aligned}
\right.
\]
Thus the transformation consists of two parts: the velocity part, which depends on the unknown fluid motion, and the stochastic part, which is generated by the prescribed transport vector fields. The moving domain is then represented as
\[
\Omega_t=\rX(t,\Omega_0),
\]
and the original problem is pulled back to the fixed reference domain \(\Omega_0\). A key point in the analysis is to construct a stopping time on which this stochastic Lagrangian map remains close to the identity and its inverse, Jacobian, and transformed coefficients satisfy suitable pathwise bounds.

\smallskip

\noindent
This approach is based on a stochastic flow generated by the transport vector fields and the velocity. A related stochastic flow with a prescribed deterministic drift was considered by Flandoli, Gubinelli, and Priola \cite{FlandoliGubinelliPriola} in the study of transport equations. Related stochastic models for compressible Navier--Stokes equations have been studied in several works. This includes the weak martingale solution theory of Breit and Hofmanov\'a \cite{BreitHofmanova}, as well as the relative-energy framework, weak--strong uniqueness, local strong solutions, and the general theory developed by Breit, Feireisl, and Hofmanov\'a \cite{BreitFeireislHofmanovaRelativeEnergy,BreitFeireislHofmanovaStrong,BreitFeireislHofmanovaBook}. Compressible fluid models with transport noise were treated in \cite{BreitFeireislHofmanovaZatorska,BreitFeireislHofmanovaMucha}. These settings do not involve a free-boundary value problem; in contrast, here the stochastic flow determines the evolution of the physical domain itself.
\smallskip

\noindent
Under the stochastic Lagrangian transformation, the transport noise is absorbed into the coefficients of the transformed system. Moreover, mass conservation gives an explicit expression for the transformed density $\bar \varrho$ by
\[
\bar\varrho(t,y)=\frac{\varrho_0(y)}{J(t,y)}
\ \text{ where } \
J(t,y)=\det\nabla\rX(t,y).
\]
Thus the fixed-point argument can be formulated in terms of the transformed velocity alone. The additive It\^o forcing is separated by writing the transformed velocity as the sum of a stochastic convolution and a pathwise deterministic remainder. The proof then combines pathwise estimates for the stochastic Lagrangian flow, stochastic maximal regularity for the additive part, deterministic maximal \(\rL^p\)-\(\rL^q\) regularity for the Lam\'e system, and a localized contraction argument.

\smallskip

\noindent
The paper is organized as follows. In \autoref{sec: prelim} we introduce the stochastic free-boundary model, describe the kinematic boundary condition, and fix the functional-analytic framework. In \autoref{sec: main} we state the main local well-posedness result. In \autoref{sec: Transform} we construct the stochastic Lagrangian transformation, derive the transformed problem on the fixed reference domain, and collect the required pathwise estimates for the transformed coefficients. In the following section we analyze the corresponding linearized Lam\'e system and separate the additive stochastic forcing by means of a stochastic convolution. In \autoref{sec: estimates} we prove the nonlinear estimates for the transformed velocity equation and the boundary condition. Finally, in \autoref{sec: local} we combine these ingredients in a localized contraction argument and reconstruct the Eulerian free-boundary solution. The appendix contains auxiliary estimates for the stochastic Lagrangian flow.

\section{Preliminaries}\label{sec: prelim}
\noindent
Let $T>0$ and consider a three-dimensional barotropic compressible viscous fluid occupying a moving domain
\[
\Omega_t \subset \mathbb{R}^3 \ \text{ with }\  t \in [0,T],
\]
where $\Omega_0\subset \mathbb R^3$ is a bounded $\rC^2$ domain,
The unknowns are the density $\varrho = \varrho(t,x) \ge 0$,
the velocity $u = u(t,x) \in \mathbb{R}^3,$ and the free boundary $\Gamma_t := \partial \Omega_t.$
We assume the pressure law
\[
p(\varrho) = a \varrho^\gamma \ \text{ where }\  a>0 \ \text{ and }\  \gamma>1,
\]
and the viscous stress tensor
\[
\mathbb{S}(\nabla u)
=
\mu \bigl(\nabla u + \nabla u^{\top}\bigr)
+
\lambda \, \operatorname{div} u \, \mathbb{I},
\]
where $\mu >0$ and $2\mu + 3\lambda>0$.
Let \(Q_1,\dots,Q_K : \mathbb{R}^3 \to \mathbb{R}^3\) be given smooth, divergence-free transport vector fields.
The transport noise in the continuity equation is of the form
\[
\sum_{k=1}^K \operatorname{div}(\varrho Q_k)\circ \d W_t^k,
\]
whereas in the momentum equation it is given by
\[
\sum_{k=1}^K \operatorname{div}(\varrho u\otimes Q_k)\circ \d W_t^k,
\]
see also \cite{BreitFeireislHofmanovaMucha}.
We work on a complete filtered probability space
\(
(\Omega,\mathcal F,(\mathcal F_t)_{t\in[0,T]},\mathbb{P}).
\)
Let \((W^i)_{i=1}^K\) be independent real-valued \((\mathcal F_t)\)-adapted Brownian motions, and let \(B\) be a cylindrical \((\mathcal F_t)_t\)-Wiener process on a separable Hilbert space \(\mathcal U\) independent of $W^k$ for $k=1,...,K$.
If \(\mathrm{Y}\) is a Banach space, an \(\mathrm{Y}\)-valued stochastic process \(\Phi\) is adapted if \(\Phi(t)\) is \(\mathcal F_t\)-measurable for every \(t\in[0,T]\). It is progressively measurable if the map
\[
(s,\omega)\mapsto \Phi_s(\omega)
\]
is measurable on
\(
[0,t]\times\Omega,
\,
\mathcal B([0,t])\otimes\mathcal F_t,
\)
for every \(t\ge0\), where \(\mathcal B([0,t])\) denotes the Borel \(\sigma\)-algebra on \([0,t]\).
Further, we denote by
\[
\rL^0_{\mathscr P}(\Omega;\rL^p(0,T;\mathrm{Y}))
\]
the space of all progressively measurable processes
\(f:(0,T)\times\Omega\to \mathrm{Y}\) such that
\(
f(\cdot,\omega)\in \rL^p(0,T;\mathrm{Y})
\)
for a.e. \(\omega\in\Omega\). Moreover, given a sub-\(\sigma\)-algebra \(\mathcal G\subset\mathcal F\), we denote by
\(
\rL^0_{\mathcal G}(\Omega;\mathrm{Y})
\)
the space of \(\mathcal G\)-measurable \(\mathrm{Y}\)-valued random variables.
The transport noise is understood in the \emph{Stratonovich} sense, whereas the stochastic forcing term \(f_{\mathrm{sto}}\,\d B_t\), with given coefficient
\(
f_{\mathrm{sto}}(t,x),
\)
in the momentum equation is understood in the \emph{It\^o} sense. In particular, for sufficiently smooth vector fields \(Q_k\) and a semimartingale \(Z\), one has
\[
\sum_{k=1}^K Q_k(Z_t)\circ \d W_t^k
=
\sum_{k=1}^K Q_k(Z_t)\,\d W_t^k
+
\frac12\sum_{k=1}^K DQ_k(Z_t)\,Q_k(Z_t)\,\d t.
\]
For background on Stratonovich and It\^o integration we refer to \cite{MR1121940, MR1725357}.

\smallskip

\noindent
The compressible Navier-Stokes free-boundary problem with transport noise and stochastic momentum forcing is then given by the system
\begin{equation}
\left\{
\begin{aligned}
\d \varrho + \operatorname{div}(\varrho u)\,\d t
& = - \sum_k \operatorname{div}(\varrho Q_k)\circ \d W_t^k ,
&& \text{in } \Omega_t \times (0,T),
\\
\d (\varrho u)
+\Bigl[
\operatorname{div}(\varrho u \otimes u)
+\nabla p
-\operatorname{div}\mathbb{S}(\nabla u)
\Bigr]\d t
&=
-\sum_{k} \operatorname{div}(\varrho u \otimes Q_k)\circ \d W_t^k
+
\varrho f_{\mathrm{sto}}\,\d B_t,
&& \text{in } \Omega_t \times (0,T),
\\
\d x_t
&=
u(t,x_t)\,\d t
+\sum_k Q_k(x_t)\circ \d W_t^k,
&& \text{on } \Gamma_t \times (0,T),
\\
\bigl(\mathbb{S}(\nabla u)-p(\varrho)\mathbb{I}_3\bigr)n_t
&=
-p_{\mathrm{ext}}\,n_t,
&& \text{on } \Gamma_t \times (0,T),
\\
\varrho(0,x)=\varrho_0(x),
\quad
u(0,x)&=u_0(x), && \text{in } \Omega_0.
\end{aligned}
\right.
\label{eq:stochastic-free-boundary-CNSE-RHS}
\end{equation}
The boundary condition
\[
\d x_t
=
u(t,x_t)\,\d t
+
\sum_{k} Q_k(x_t)\circ \d W_t^k,
\qquad x_t\in \Gamma_t,
\]
is the \emph{stochastic kinematic boundary condition}. Its physical meaning is that the free boundary is a \emph{material surface}. Fluid particles that start on the interface remain on the interface for later times. In the deterministic free-boundary theory, this principle is expressed by the condition
\[
\dt x_t=u(t,x_t),
\qquad x_t\in \Gamma_t,
\]
which says that the free surface is transported by the fluid velocity. In particular, there is no flux of fluid through the free boundary, and the motion of the interface is completely determined by the motion of the fluid particles located on it.
A convenient geometric way to describe the moving boundary is by means of a \emph{level-set function}. More precisely, let
\[
\Phi=\Phi(t,x)
\]
be such that
\[
\Gamma_t=\{x\in \mathbb R^3:\Phi(t,x)=0\} \ \text{ and }\ 
\Omega_t=\{x\in \mathbb R^3:\Phi(t,x)<0\}.
\]
Then the outward unit normal is given by
\[
n_t=\frac{\nabla \Phi(t,x)}{|\nabla \Phi(t,x)|}
\ \text{ on }\Gamma_t.
\]
In the deterministic setting, the fact that the boundary is transported by the flow is equivalent to the \emph{kinematic level-set equation}
\[
\partial_t \Phi + u\cdot \nabla \Phi=0
\ \text{ on }\Gamma_t.
\]
Taking the normal component, one obtains the equivalent geometric form
\[
V_{\Gamma_t}=u\cdot n_t,
\]
where \(V_{\Gamma_t}\) denotes the normal velocity of the free boundary.
In the present stochastic model, the bulk equations contain \emph{transport noise}
\[
\sum_k Q_k(x)\circ \d W_t^k,
\]
which is interpreted as an additional random advective component of the fluid parcel motion. Consequently, the correct stochastic counterpart of the kinematic boundary condition is that the interface is transported by the \emph{full stochastic velocity field},
\[
u\,\d t+\sum_k Q_k\circ \d W_t^k.
\]
Therefore, a boundary particle trajectory \(x_t\in\Gamma_t\) must satisfy
\[
\d x_t
=
u(t,x_t)\,\d t
+
\sum_k Q_k(x_t)\circ \d W_t^k.
\]
Equivalently, in level-set form, the boundary is characterized by the \emph{stochastic kinematic condition}
\[
\d \Phi + u\cdot \nabla \Phi\,\d t
+
\sum_k Q_k\cdot \nabla \Phi \circ \d W_t^k
=0
\ \text{ on }\Gamma_t.
\]
Thus, the stochastic term appears in the boundary condition for the same reason as in the bulk equations: it represents random transport of fluid particles. Since the free boundary is a material surface, its motion must be consistent with that same stochastic transport law.

\smallskip

\noindent
Let us also comment on the energy structure of the model. Denote by \(P\) the pressure potential, defined by
\[
P'(\varrho)\varrho-P(\varrho)=p(\varrho).
\]
Formally, the natural total energy is
\[
\int_{\Omega_t}
\left(
\frac12\varrho |u|^2+P(\varrho)
\right)\,\d x
+
p_{\mathrm{ext}}|\Omega_t|.
\]
The last term accounts for the work of the constant exterior pressure on the moving boundary. Since the boundary is transported by the same stochastic flow as the fluid particles, the transport-noise contributions have the form of stochastic fluxes through the moving domain. In particular, the assumption
\(
\operatorname{div} Q_k=0,
\)
is important for the energy balance. It prevents the appearance of additional bulk production terms coming from the stochastic transport. This point is also emphasized in the work of Breit, Feireisl, Hofmanov\'a, and Zatorska on the compressible Navier--Stokes system with transport noise \cite{BreitFeireislHofmanovaZatorska}. In the absence of the It\^o forcing \(f_{\mathrm{sto}}\,\d B_t\), the formal energy balance therefore has the same dissipative structure as in the deterministic free-boundary problem, namely
\[
\d
\left[
\int_{\Omega_t}
\left(
\frac12\varrho |u|^2+P(\varrho)
\right)\,\d x
+
p_{\mathrm{ext}}|\Omega_t|
\right]
+
\int_{\Omega_t}\mathbb S(\nabla u):\nabla u\,\d x\,\d t
=0,
\]
up to the usual interpretation of the preceding identity along the stochastic flow. The additional It\^o forcing in the momentum equation produces the corresponding martingale and It\^o correction terms.

\smallskip

\noindent
We remark that in the case of vanishing noise coefficients the system \eqref{eq:stochastic-free-boundary-CNSE-RHS} coincides with the deterministic free boundary value problem for the compressible Navier-Stokes equations.
\begin{rem}
    {\rm
If $Q_k \equiv 0\text{ for all } k=1,...,K,
\text{ and }
f_{\mathrm{sto}} \equiv 0$
then the stochastic terms disappear and the system reduces to the standard free-boundary value problem for the three-dimensional barotropic compressible Navier--Stokes equations:
\begin{equation}
\left\{
\begin{aligned}
\partial_t \varrho + \operatorname{div}(\varrho u)
&= 0,
&& \text{in } \Omega_t \times (0,T),
\\
\partial_t(\varrho u)
+\operatorname{div}(\varrho u \otimes u)
+\nabla p(\varrho)
-\operatorname{div}\mathbb S(\nabla u)
&= 0,
&& \text{in } \Omega_t \times (0,T),
\\
V_{\Gamma_t}
&= u \cdot n_t,
&& \text{on } \Gamma_t \times (0,T),
\\
\bigl(\mathbb S(\nabla u)-p(\varrho)I_3\bigr)n_t
&= -p_{\mathrm{ext}}\,n_t,
&& \text{on } \Gamma_t \times (0,T),
\\
\varrho(0,x)=\varrho_0(x),
\quad
u(0,x)&=u_0(x),
&& \text{in } \Omega_0.
\end{aligned}
\right.
\label{eq:deterministic-free-boundary-CNSE}
\end{equation}
In this case the free boundary is transported only by the fluid velocity, and the stochastic kinematic condition reduces to the usual deterministic one
\[
V_{\Gamma_t}=u\cdot n_t.
\]
Hence the present formulation is a genuine stochastic extension of the classical compressible Navier--Stokes free-boundary problem.
}
\end{rem}

\subsection{Functional setting}
Some words about the functional-analytic setting are in order. Let \(s\in\R\) and \(p,q\in(1,\infty)\). For an open set \(\mathcal O\subset\R^3\), we denote by \(\rW^{s,q}(\mathcal O)\) the Sobolev--Slobodeckij spaces, by \(\rH^{s,q}(\mathcal O)\) the Bessel potential spaces, by \(\dot{\rH}^{s,q}(\mathcal O)\) the corresponding homogeneous Bessel potential spaces, by \(\rB^s_{qp}(\mathcal O)\) the Besov spaces, and by \(\rF^s_{pq}(\mathcal O)\) the Triebel--Lizorkin spaces. As usual, we set
\[
\rH^{0,q}(\mathcal O):=\rL^q(\mathcal O),
\]
and note that, for \(m\in\N_0\),
\[
\rH^{m,q}(\mathcal O)=\rW^{m,q}(\mathcal O)
\]
with equivalent norms. For details concerning these spaces, we refer to the monographs \cite{Tri:78,Adams,Ama:19}.
In the sequel, the function spaces on the fixed reference domain \(\Omega_0\) are always understood in this standard sense. 
\begin{defn}[Local strong solutions of \eqref{eq:stochastic-free-boundary-CNSE-RHS}]
\label{def: local strong solution}\mbox{}\\
Let $\tau$ be an a.s. positive $(\mathcal{F}_t)_t$-stopping time. A triple $(\varrho,u, (\Omega_t)_{t\in[0,\tau]}) $
is a \emph{local strong solution} of \eqref{eq:stochastic-free-boundary-CNSE-RHS} on \([0,\tau]\) if the following hold.
\begin{enumerate}
\item[(i)] There exists a progressively measurable map $
\Lambda:[0,\tau]\times\Omega\times\Omega_0\to\mathbb R^3$
such that, for a.e. $\omega$ and every $t\in[0,\tau(\omega)]$, the map $
\Lambda(t,\omega,\cdot):\Omega_0\to\Omega_t(\omega) $
is a $\rC^1$-diffeomorphism into the bounded domain $\Omega_t(\omega)$, with
$$
\Omega_t(\omega)=\Lambda(t,\omega,\Omega_0) \ \text{ and }\ \Gamma_t(\omega)=\Lambda(t,\omega,\Gamma_0).
$$
\item[(ii)]
For $y\in \Omega_0$, the pullbacks 
$$
\bar\varrho(t,y):=\varrho\bigl(t,\Lambda(t,y)\bigr) \ \text{ and }\ \bar u(t,y):=u\bigl(t,\Lambda(t,y)\bigr),
$$
are progressively measurable with values in
\(\rH^{1,q}(\Omega_0)\) and \(\rL^q(\Omega_0;\mathbb R^3)\), respectively, and satisfy
\begin{equation*}
    \begin{aligned}
        \bar\varrho &\in \rL_\mathscr{P}^0 (\Omega;\rH^{1,p}(0,\tau;\rH^{1,q}(\Omega_0))), \\ \quad 
\bar u &\in \rL_\mathscr{P}^0 \big (\Omega;\bigcap_{\theta \in [0, \nicefrac{1}{2})}  \rH^{\theta,p}(0,T;\rH^{2- 2\theta,q}(\Omega_0,\R^3)) \cap \rC([0,\tau];\rB^{2-\nicefrac{2}{p}}_{qp}(\Omega_0;\R^3))
\big ).
    \end{aligned}
\end{equation*}

\item[(iii)]
The system \eqref{eq:stochastic-free-boundary-CNSE-RHS} is satisfied almost surely on \([0,\tau]\) in the strong sense.
\end{enumerate}
\end{defn}

\section{Main Results}\label{sec: main}
\noindent
In this section, we state the main result of this work, namely the local strong well-posedness of the stochastic free-boundary problem \eqref{eq:stochastic-free-boundary-CNSE-RHS} under suitable assumptions on the initial data and the stochastic forcing.
\begin{thm}[Local strong well-posedness of \eqref{eq:stochastic-free-boundary-CNSE-RHS}] \mbox{} \\ 
\label{thm:local-eulerian}
\noindent
Let \(p\in(2,\infty)\), \(q\in(3,\infty)\) satisfy
\(
\frac{2}{p}+\frac{3}{q}<1,
\)
let \(\Omega_0\subset\R^3\) be a bounded domain of class \(\rC^2\), and assume that
\(
Q_k\in \rC_b^\infty(\R^3;\R^3)  \text{ with }  \div Q_k =0  \text{ for all }  k=1,\dots,K.
\)
Assume moreover $$
\varrho_0 \in \rL^0_{\mathcal{F}_0}(\Omega;\rH^{1,q}(\Omega_0)) \ \text{ and }\ u_0 \in \rL^0_{\mathcal{F}_0}(\Omega;\rB^{2-\nicefrac{2}{p}}_{qp}(\Omega_0;\R^3)),$$
and that there exist constants \(0<\varrho_\ast\le \varrho^\ast<\infty\) such that
\[
\varrho_\ast \le \varrho_0(\omega,x)\le \varrho^\ast
\
\text{ for a.e. } (\omega,x)\in\Omega\times\Omega_0.
\]
Additionally, assume that the compatibility condition
\[
\bigl(\mathbb S(\nabla u_0(\omega,\cdot))
-
p(\varrho_0(\omega,\cdot)) I_3\bigr)N
=
-p_{\mathrm{ext}}\,N
\
\text{ for a.e. }(\omega,x) \in \Omega \times \Gamma_0
\]
is satisfied. Moreover, let \(\eps>0\) and
\[
f_{\mathrm{sto}}
\in
\rL^0_{\mathscr{P}}\bigl(\Omega;\rL^p\bigl(0,T;\gamma(\mathcal{U},\rH^{1+ 2\eps,q}(\Omega_0)\bigr)\bigr).
\]
Then there exists an a.s. positive \((\mathcal F_t)_t\)-stopping time \(\tau\)
and a local strong solution
\[
(\varrho,u,\{\Omega_t\}_{t\in[0,\tau]})
\]
of \eqref{eq:stochastic-free-boundary-CNSE-RHS} on \([0,\tau]\) in the sense of
\autoref{def: local strong solution}.
Moreover, the density remains strictly positive on the stochastic interval
\([0,\tau]\), in the sense that
\[
\varrho(t,\omega,x)\ge \frac{\varrho_\ast}{2}
\quad
\text{for all } (t,x)\in[0,\tau(\omega)]\times\Omega_t(\omega)
\ \text{ for a.e. }\omega\in\Omega 
\]
and the moving boundary satisfies the stochastic kinematic condition
\[
\d x_t
=
u(t,x_t)\,\d t
+
\sum_{k=1}^K Q_k(x_t)\circ \d W_t^k
\
\text{ on }\Gamma_t\times(0,\tau).
\]
Finally, the solution is pathwise unique in the above class.
\end{thm}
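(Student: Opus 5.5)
The plan is to follow the strategy sketched in the introduction: pass to stochastic Lagrangian coordinates, reduce to a fixed-point problem for the transformed velocity $\bar u$ alone, and close it by a localized contraction.

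\textbf{Step 1: the flow.} Fix $\bar u$ in the ball
\[
\mathbb E_T:=\rH^{1,p}(0,T;\rL^q)\cap \rL^p(0,T;\rH^{2,q}) + \text{(stochastic part)}.
\]
Solve the flow equation for $\rX$ pathwise; this works because $Q_k\in\rC_b^\infty$ and $\bar u\in \rL^1(0,T;\rC^1)$, the latter using $\tfrac2p+\tfrac3q<1$. Then introduce the stochastic characteristics of the $Q_k$ alone, $\Psi_t$, in the spirit of Flandoli--Gubinelli--Priola. Write
\[
\rX(t,y)=\Psi_t(\mathrm Y(t,y)),
\]
where $\mathrm Y$ solves a random ODE with drift $(D\Psi_t)^{-1}\bar u$. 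Using the appendix estimates on $\Psi$, define a stopping time $\tau_0$ up to which the following hold:
\[
\|\nabla\rX-\mathbb I\|_{\rC(\bar\Omega_0)}\le\delta,\qquad \|\nabla\Psi-\mathbb I\|_{\rC^2}\le\delta,\qquad \|\nabla \mathrm Y-\mathbb I\|_{\rL^\infty(\rH^{1,q})}\le C\,T^{1/p'}\|\bar u\|.
\]
Since $\div Q_k=0$, the Jacobian $J=\det\nabla\rX$ solves $\d J=J\,(\div u)\circ\rX\,\d t$. Hence $\bar\varrho=\varrho_0/J$ solves the transformed continuity equation. It also satisfies $\bar\varrho\ge\varrho_\ast/2$ once $\delta$ is small, and lies in $\rH^{1,p}(0,\tau;\rH^{1,q})$.

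\textbf{Step 2: the transformed system.} Pull back the momentum equation. Because both the density and the domain are transported by the same Stratonovich flow, the transport-noise terms cancel against the It\^o differential of $u\circ\rX$; this is exactly why the kinematic condition carries the noise. What remains is
\[
\bar\varrho\,\d\bar u-\div\mathbb S(\nabla\bar u)\,\d t=F(\bar u)\,\d t+\bar\varrho\bar f_{\mathrm{sto}}\,\d B_t,
\]
with boundary condition
\[
\mathbb S(\nabla\bar u)N-(p(\varrho_0)-p_{\mathrm{ext}})N=G(\bar u) \quad\text{on }\Gamma_0.
\]
Here $F$ and $G$ collect the quasilinear terms $(\mathcal A(\nabla\rX)-\mathcal A(\mathbb I))\nabla^2\bar u$ and $\nabla p(\bar\varrho)$, together with lower-order terms. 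Dividing by $\varrho_0$ and freezing the coefficient at $t=0$ gives a Lam\'e system with Neumann data. The compatibility assumption ensures that the boundary data at $t=0$ vanish in the appropriate trace space.

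\textbf{Step 3: the linear theory.} Split $\bar u=Z+v$. Here $Z$ is the stochastic convolution for the linear Lam\'e operator with homogeneous Neumann condition. By stochastic maximal $\rL^p$-regularity (the operator has bounded $\Hinfty$-calculus on $\rL^q$), $f_{\mathrm{sto}}\in\gamma(\mathcal U,\rH^{1+2\eps,q})$ gives $Z\in \rH^{\theta,p}(\rH^{2-2\theta,q})$ for all $\theta<\tfrac12$. This extra $2\eps$ buys the spatial regularity lost in time, so that $\nabla Z\in \rL^p(\rH^{1,q})$ and the nonlinearity makes sense. Note, however, that $\bar\varrho\bar f_{\mathrm{sto}}$ depends on $\bar u$; I would handle this by putting the factor $\bar\varrho/\varrho_0$ into $F$ through an It\^o correction, or else by including the stochastic integral in the fixed point via stochastic maximal regularity with localization. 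The pathwise remainder $v$ is then treated by deterministic maximal $\rL^p$-$\rL^q$ regularity for the Lam\'e system with inhomogeneous Neumann data, in the style of Enomoto--Shibata, with the boundary term in
\[
\rW^{1/2-1/(2q),p}(\rL^q(\Gamma_0))\cap\rL^p(\rW^{1-1/q,q}(\Gamma_0)).
\]

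\textbf{Step 4: contraction and reconstruction.} Define the solution map on a ball of radius $R$ up to the stopping time
\[
\tau_R=\inf\{t:\ \|Z\|_{t}+\|\text{data}\|\ge R\}\wedge\tau_0\wedge T_\ast,
\]
with $T_\ast$ small. Prove estimates of the form
\[
\|F(\bar u_1)-F(\bar u_2)\|+\|G(\bar u_1)-G(\bar u_2)\|\le C(R)\bigl(T^\kappa+\delta\bigr)\|\bar u_1-\bar u_2\|.
\]
These come from the smallness of $\nabla\rX-\mathbb I$, the Lipschitz dependence of $\rX$ on $\bar u$, and Hölder in time. This gives a contraction for each $\omega$, and adaptedness is preserved because everything is built from stopped adapted processes. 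Finally, undo the transformation by setting $\Lambda=\rX$, $\Omega_t=\rX(t,\Omega_0)$, $u=\bar u\circ\rX^{-1}$ and $\varrho=\bar\varrho\circ\rX^{-1}$. The kinematic condition holds by construction, and positivity follows from Step 1. Pathwise uniqueness follows by applying the same difference estimates to two solutions on the minimum of their stopping times.

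\textbf{Main obstacle.} I expect the hardest part to be the difference estimates for $G$ on $\Gamma_0$ in the time-fractional trace space. This requires control of $\nabla\rX$ in $\rH^{1/2,p}$ in time. The flow has only Brownian (Hölder $<\tfrac12$) time regularity through $\Psi$, so I would factor $\rX=\Psi_t\circ\mathrm Y$. Then the rough part $\Psi$ enters only through smooth-in-space coefficients, which can be commuted out, while $\mathrm Y$ is $\rH^{1,p}$ in time.
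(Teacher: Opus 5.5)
Your architecture matches the paper's: the flow $\rX=\psi_t\circ\mathrm Y$ with a random ODE for $\mathrm Y$ driven by $D\psi_t(\mathrm Y)^{-1}\bar u$, $\bar\varrho=\varrho_0/J$ from $\div Q_k=0$, a Da Prato--Debussche split into a fixed stochastic convolution plus a pathwise Lam\'e problem with Neumann data, and a pathwise contraction localized by $\mathcal F_0$-sets and stopping times.

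\textbf{Gap 1: the boundary term in the time-fractional norm.} This is the step you flag yourself, and the remedy you propose would not work. The traction coefficient $J\mathrm Z^\top N=\mathrm{cof}(\nabla\rX)N$ contains the factor $\mathrm{cof}(D\psi_t(\mathrm Y_t))$. This factor has genuinely Brownian time regularity and reflects the motion of $\Gamma_t$ itself, so it cannot be ``commuted out''. If $\rH^{1/2,p}$ in time were really needed, the argument would fail.

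What saves it is that the data space only requires $s=\frac12-\frac1{2q}<\frac12$, which is the space you wrote in Step~3. The paper takes $\theta=s$ and uses Kolmogorov to get $t\mapsto D\psi_t$ H\"older of every order $\beta<\frac12$ in $\rC^2_b$. For $\alpha\in(\theta,\frac12)$ it then bounds $\|\nabla\rX-I_3\|_{\rH^{\theta,p}(0,t;\rH^{1,q})}$ by terms of size $K_\alpha(t)\,t^{\alpha-\theta}$ plus $\mathrm Y$-contributions. It transfers this bound to $\mathrm Z$ and $J$ via the inverse/determinant lemma. Finally, it builds $\rH^{\theta,p}(\rH^{1,q})$-smallness of $\mathrm Z-I_3$ and $J-1$ into the stopping time. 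Your $\tau_0$ controls only $\rL^\infty$-in-time quantities, which is not enough for the paraproduct estimate in $\rF^s_{pq}(\rL^q(\Gamma_0))$.

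The same endpoint problem hits your stochastic convolution. You record only $Z\in\rH^{\theta,p}(\rH^{2-2\theta,q})$. At $\theta=s$ this gives $\nabla Z\in\rH^{s,p}(\rH^{\nicefrac1q,q})$, exactly where the trace into $\rL^q(\Gamma_0)$ fails. The $\eps$ is not there to get $\nabla Z\in\rL^p(\rH^{1,q})$, which already holds for $\eps=0$. Its role is to let you run stochastic maximal regularity in the shifted scale $\rD((A_q+\omega)^\eps)$. This gives $Z\in\rH^{s,p}(\rH^{1+\nicefrac1q+2\eps,q})$, which has an $\rL^q(\Gamma_0)$-trace for $\nabla Z$.

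\textbf{Gap 2: the noise coefficient.} Your treatment does not close as proposed.
\begin{itemize}
\item Since $\div Q_k=0$, $\bar\varrho$ is of finite variation. Dividing the momentum equation by it produces no It\^o correction, so the density factor simply drops out.
\item What remains is $f_{\mathrm{sto}}\circ\rX$. This is a stochastic integrand depending on the unknown, and it cannot be moved into the drift $F$.
\item Putting the stochastic integral into the fixed point instead would destroy the pathwise contraction. It would also require Lipschitz bounds for $f_{\mathrm{sto}}\circ\rX$ in $\gamma(\mathcal U,\rH^{1+2\eps,q})$ that the hypotheses do not provide.
\end{itemize}
The paper avoids this by taking $f_{\mathrm{sto}}$ as prescribed on the reference domain $\Omega_0$. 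That is how the hypothesis is stated, and the transformed coefficient is explicitly identified with it by abuse of notation. The forcing is then additive after transformation, $U$ is fixed once and for all, and the contraction is pathwise. Adopt that convention.
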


\section{Stochastic Lagrangian transformation}\label{sec: Transform}
\noindent
Let \(T>0\). In this section we transform the free-boundary value problem \eqref{eq:stochastic-free-boundary-CNSE-RHS} to the fixed reference domain \(\Omega_0\). Since the local well-posedness argument is carried out pathwise, we fix \(\omega\in\Omega\) throughout and assume that
\[
f_{\mathrm{sto}}(\omega)\in \rL^p(0,T;\gamma(\mathcal{U},\rH^{1+ 2\eps,q}(\Omega_0))),
\]
and denote by \(U(\omega)\) the corresponding stochastic convolution, that is, the unique strong adapted solution of \eqref{eq:U-system}. Let
\begin{equation}
    \label{eq: assumption flow}
    v(\omega)\in \mathcal{E}_{1,T} :=\rH^{1,p}(0,T;\rLq(\Omega_0))
\cap
\rLp(0,T;\rH^{2,q}(\Omega_0)) \ \text{ and set }\ 
\bar u(\omega):=v(\omega)+U(\omega).
\end{equation}
For fixed \(\omega\), we define the stochastic flow map \(\rX(\omega)\) as the solution of
\begin{equation}
\label{eq:flowX}
\left\{
\begin{aligned}
\d \rX(t,y)
&=
\bar u(\omega,t,y)\,\d t
+
\sum_k Q_k\bigl(\rX(t,y)\bigr)\circ \d W_t^k(\omega),
\quad t \in (0,T),
\\
\rX(0,y)
&=
y,
\quad y\in \Omega_0.
\end{aligned}
\right.
\end{equation}
The flow equation may be written in the integral form
\begin{equation}
\label{eq:stochastic-flow-lemma}
\rX(t,y)
=
y+\int_0^t \bar u(\omega,s,y)\,\d s
+
\sum_k\int_0^t Q_k\bigl(\rX(s,y)\bigr)\circ \d W_s^k(\omega),
\quad t\in[0,T].
\end{equation}
With the aid of the stochastic flow map \(\rX(\omega)\), the evolving fluid region is described by
\[
\Omega_t(\omega)=\rX(t,\Omega_0)
\ \text{ and }\
\Gamma_t(\omega)=\rX(t,\Gamma_0).
\]
In this way, the original free-boundary problem is rewritten on the fixed reference domain \(\Omega_0\).
We emphasize that this change of variables is not only a device for fixing the domain. Because \(\rX\) is generated by the full stochastic transport field, the transport terms are absorbed into the time derivative in Lagrangian coordinates. Thus, both the hyperbolic transport effects and the stochastic transport effects are hidden in the transformation itself. They reappear through the transformed coefficients, which depend on \(\rX\), its gradient, its inverse, and its Jacobian. For this reason, a careful analysis of these quantities is a central part of the argument.

\smallskip

\noindent
Define the inverse flow map \(\rF(\omega)\) by
\[
\rF(\omega,t,\cdot):=\rX(\omega,t,\cdot)^{-1} \ \text{ for }\ t\in[0,T],
\]
and set
\[
\mathrm Z(\omega,t,\cdot):=\bigl(\nabla \rX(\omega,t,\cdot)\bigr)^{-1}
=
\nabla_x \rF\bigl(\omega,t,\rX(\omega,t,\cdot)\bigr)
\ \text{ and } \
J(\omega,t,\cdot):=\det \nabla \rX(\omega,t,\cdot).
\]
The following lemma establishes the pathwise well-posedness of the stochastic Lagrangian transformation introduced above. 
The lemma reads as follows.
\begin{prop}[Pathwise estimates for the stochastic Lagrangian transform] \mbox{} \\
\label{prop:pathwise-stochastic-lagrangian}
Let \(T>0\), \(p\in(2,\infty)\), \(q\in(3,\infty)\), $\eps >0$, $f_{\mathrm{sto}}(\omega) \in \rL^p(0,T;\gamma(\mathcal{U},\rH^{1+ 2\eps,q}(\Omega_0)))$ and fix
\(\omega\in\Omega\). Assume that
\[
v(\omega)\in \mathcal E_{1,T} \ \text{ and } \
\bar u(\omega):=v(\omega)+U(\omega),
\]
where \(U\) is the stochastic convolution from
\autoref{lem:stochastic-U-additive}. Let \(\rX, \mathrm{Z}, J\) be the corresponding
stochastic Lagrangian quantities. Choose \(\delta_0>0\) such that
\[
\delta_0\le \eps_*,
\]
where \(\eps_*\) is the constant from
\autoref{lem:aux-space-lagrangian}(iv). Fix
\(\theta\in(0,\frac12)\). For \(\delta\in(0,\delta_0]\), define
\[
\sigma_{\delta,\theta}
:=
\inf\bigl\{
t\in[0,T]:
\|\nabla \rX-I_3\|_{\rL^\infty(0,t;\rH^{1,q}(\Omega_0))}
+
\|\mathrm Z-I_3\|_{\rH^{\theta,p}(0,t;\rH^{1,q}(\Omega_0))}
+
\|J-1\|_{\rH^{\theta,p}(0,t;\rH^{1,q}(\Omega_0))}
\ge \delta
\bigr\}\wedge T.
\]
Then the following assertions hold.

\begin{enumerate}
\item[(i)] The flow satisfies
\[
\rX(\omega)-\mathrm{id}
\in
\rC\bigl([0,T];\rH^{2,q}(\Omega_0;\R^3)\bigr).
\]

\item[(ii)] There exists \(C_0=C_0(\Omega_0,q,\delta_0)>0\) such that for every
\(t\in[0,\sigma_{\delta,\theta}(\omega))\), we have
\[
\|\mathrm Z(\omega)-I_3\|_{\rL^\infty(0,t;\rH^{1,q}(\Omega_0))}
+
\|J(\omega)-1\|_{\rL^\infty(0,t;\rH^{1,q}(\Omega_0))}
\le
C_0\delta,
\]
and
\[
\|\mathrm Z(\omega)\|_{\rL^\infty(0,t;\rH^{1,q}(\Omega_0))}
+
\|J(\omega)\|_{\rL^\infty(0,t;\rH^{1,q}(\Omega_0))}
\le
C_0.
\]
Moreover,
\[
\|\mathrm Z(\omega)-I_3\|_{\rH^{\theta,p}(0,t;\rH^{1,q}(\Omega_0))}
+
\|J(\omega)-1\|_{\rH^{\theta,p}(0,t;\rH^{1,q}(\Omega_0))}
\le \delta
\]
for every \(t\in[0,\sigma_{\delta,\theta}(\omega))\).

\item[(iii)] Assume that
\(
\|v(\omega)\|_{\mathcal E_{1,T}}\le R.
\)
Then there exists
\(
T_{\delta,R,\theta}(\omega)>0,
\)
depending only on \(\delta\), \(R\), \(\theta\), \(p\), \(q\), \(\Omega_0\), and
on the fixed pathwise quantities \(U(\omega)\) and \(\psi(\omega)\), such that
\[
\sigma_{\delta,\theta}(\omega)\ge T_{\delta,R,\theta}(\omega).
\]

\item[(iv)] Assume now that \(v\), \(U\), \(\rX\), \(\mathrm Z\), and \(J\) are
adapted processes and that \(\|v\|_{\mathcal E_{1,T}}\le R\) a.s. Then
\(\sigma_{\delta,\theta}\) is an \((\mathcal F_t)_t\)-stopping time. Moreover,
there exists an \((\mathcal F_t)_t\)-stopping time
\(
\tau_{\delta,R,\theta}>0
\text{ a.s.},
\)
depending only on \(\delta\), \(R\), \(\theta\), \(p\), \(q\), \(\Omega_0\), and
on the adapted processes \(U\) and \(\psi\), such that
\[
\tau_{\delta,R,\theta}\le \sigma_{\delta,\theta}
\ \text{ a.s.}
\]
\end{enumerate}
\end{prop}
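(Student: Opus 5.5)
The plan is to treat the flow \eqref{eq:flowX} by a Doss--Sussmann/Kunita type decomposition. Let $\psi(\omega,t,\cdot)$ be the stochastic flow of diffeomorphisms generated by the Stratonovich fields $\sum_k Q_k\circ \d W^k$ alone. Since $Q_k\in\rC_b^\infty$ and $\div Q_k=0$, Kunita's theory gives that $\psi(\omega)$ is a.s. a volume-preserving $\rC^\infty$-diffeomorphism, with all spatial derivatives continuous in $t$. We then write $\rX(t,y)=\psi(t,\Phi(t,y))$. Here $\Phi$ solves the random ODE
\[
\partial_t\Phi(t,y)=\bigl(\nabla\psi(t,\Phi)\bigr)^{-1}\bar u(t,y),\qquad \Phi(0,y)=y .
\]
This equation is solved pathwise for fixed $\omega$.

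For (i), note that $\bar u=v+U\in \rL^p(0,T;\rH^{2,q})$. For $v$ this holds because $v\in\mathcal E_{1,T}$; for $U$ it follows from stochastic maximal regularity in \autoref{lem:stochastic-U-additive}, using $f_{\mathrm{sto}}\in\gamma(\mathcal U,\rH^{1+2\eps,q})$. Since $q>3$, $\rH^{2,q}$ is an algebra and embeds in $\rC^1$. Composition with the smooth map $(\nabla\psi)^{-1}$ is therefore locally Lipschitz on $\rH^{2,q}$-perturbations of the identity. A Picard iteration in $\rC([0,T_1];\rH^{2,q})$ then gives $\Phi-\mathrm{id}\in\rC([0,T];\rH^{2,q})$. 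The iteration is continued up to $T$ by a Gronwall estimate, which is linear in $\|\nabla \Phi\|_{\rH^{1,q}}$ because the composition estimates are tame. Composing with $\psi$ yields (i), and in particular $\nabla\rX-I_3\in \rC([0,T];\rH^{1,q})$ with $\nabla\rX(0)=I_3$.

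For (ii), note that on $[0,\sigma_{\delta,\theta})$ we have $\|\nabla\rX-I_3\|_{\rL^\infty\rH^{1,q}}<\delta\le\eps_*$. I will then invoke \autoref{lem:aux-space-lagrangian}(iv). This is the Neumann-series/algebra estimate: for $\|A-I_3\|_{\rH^{1,q}}\le\eps_*$ one has $\|A^{-1}-I_3\|_{\rH^{1,q}}+\|\det A-1\|_{\rH^{1,q}}\le C\|A-I_3\|_{\rH^{1,q}}$. Applying it pointwise in $t$ gives the $\rL^\infty$ bounds with $C_0\delta$. The absolute bounds follow by adding $\|I_3\|_{\rH^{1,q}}$ and using $\delta\le\delta_0$. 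The $\rH^{\theta,p}$ bound is immediate from the definition of the infimum, because the functional $t\mapsto\|\cdot\|_{\rH^{\theta,p}(0,t)}$ is nondecreasing.

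For (iii) and (iv), the key point is continuity at $t=0$ of the three quantities in the definition of $\sigma_{\delta,\theta}$, made quantitative in terms of $R$ and the path. The $\rL^\infty$ term is bounded by $C(\psi)\int_0^t\|\bar u\|_{\rH^{2,q}}\,\d s+\|\nabla\psi-I_3\|_{\rC([0,t];\rH^{1,q})}$. The first part is at most $C t^{1-1/p}(R+\|U\|_{\rL^p\rH^{2,q}})$, and the second tends to $0$ by continuity of $\psi$. For the $\rH^{\theta,p}$ terms I would use the embedding $\rH^{1,p}\cap\rC\hookrightarrow\rH^{\theta,p}$ on $(0,t)$ for functions vanishing at $0$, with constants uniform in $t$ (obtained via extension by zero). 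This is combined with $\partial_t\mathrm Z=-\mathrm Z(\partial_t\nabla\rX)\mathrm Z$ and the analogous formula for $J$. The bad part is $\nabla\psi$, which is only Hölder-$(\frac12-)$ in time, so $\partial_t$ is not available for it. For that factor I will estimate the $\rH^{\theta,p}$ norm directly through the Hölder-$\alpha$ modulus with $\theta<\alpha<\frac12$, which gives a factor $t^{\alpha-\theta}$ times a pathwise Hölder constant. This step, handling the $\rH^{\theta,p}(0,t)$ norm of the noise-driven part with small-time smallness, is the main obstacle, and it is exactly why $\theta<\frac12$ is imposed. Combining the pieces gives a bound $\Theta(t)\le C(\omega)\bigl(t^{\beta}(R+1)+o_\psi(1)\bigr)$, and we take $T_{\delta,R,\theta}$ as the first $t$ where the right side reaches $\delta$.

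For (iv), the quantities inside the infimum are adapted and continuous in $t$: the $\rH^{\theta,p}(0,t)$ norms depend only on the path up to $t$ and are continuous in $t$. Hence $\sigma_{\delta,\theta}$ is the hitting time of the closed set $[\delta,\infty)$ by a continuous adapted process, and is therefore a stopping time. Similarly, $\tau_{\delta,R,\theta}$ is defined as the first time the adapted continuous majorant $C(\omega,t)\bigl(t^\beta(R+1)+\|\nabla\psi-I_3\|_{\rC([0,t])}+[\nabla\psi]_{\rC^\alpha(0,t)}t^{\alpha-\theta}\bigr)$ reaches $\delta$, capped at $T$. It is a.s. positive because the majorant vanishes at $t=0$, and it is at most $\sigma_{\delta,\theta}$ by (iii).
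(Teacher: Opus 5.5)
Your proposal is correct and follows essentially the same route as the paper. Both use the factorization $\rX=\psi\circ\mathrm Y$ with the random ODE for $\mathrm Y$, and both apply \autoref{lem:aux-space-lagrangian}(iv) pointwise in time for (ii). For (iii)--(iv), both split into a time-differentiable part, which gives $t^{1-\theta}$, and a H\"older-$\alpha$ noise part with $\alpha\in(\theta,\frac12)$, which gives $t^{\alpha-\theta}$; both then take hitting times of adapted majorants.

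The one difference is in how the $\rH^{\theta,p}(0,t;\rH^{1,q})$ bound reaches $\mathrm Z$ and $J$. The paper first bounds $\nabla\rX-I_3$ in that norm and then passes to $\mathrm Z$ and $J$ via the difference identity $\mathrm Z(r)-\mathrm Z(s)=A(r)^{-1}(A(s)-A(r))A(s)^{-1}$ (\autoref{lem:aux-time-inverse-det}). This is cleaner than your formula $\partial_t\mathrm Z=-\mathrm Z(\partial_t\nabla\rX)\mathrm Z$, because $\nabla\rX$ is not differentiable in time.
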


\begin{proof}
We work pathwise and suppress \(\omega\) from the notation.

\smallskip

\noindent
\emph{(i).}
Let \(\psi\) be the noise-only flow generated by
\[
d\psi_t(x)=\sum_{k=1}^K Q_k(\psi_t(x))\circ dW_t^k,
\quad
\psi_0(x)=x.
\]
Since \(Q_k\in \rC_b^\infty(\R^3;\R^3)\), Kunita's theory, \cite[Section~4]{Kun_90} gives a stochastic
flow of \(\rC^\infty\)-diffeomorphisms. Define
\[
\mathrm Y_t:=\psi_t^{-1}\circ \rX(t,\cdot)
\ \text{ so that }\
\rX(t,\cdot)=\psi_t\circ \mathrm Y_t.
\]
By the Stratonovich chain rule,
\begin{equation}
\label{eq:Y-random-ODE-rewritten}
\mathrm Y_t(y)
=
y+\int_0^t D\psi_s(\mathrm Y_s(y))^{-1}\,\bar u(s,y)\,ds.
\end{equation}
Applying \autoref{lem:aux-flow-lagrangian} to $
a_s:=D\psi_s^{-1},
b(s):=\bar u(s),$ we obtain
\[
\mathrm Y-\mathrm{id}
\in
\rC([0,T];\rH^{2,q}(\Omega_0;\R^3)) \ \text{ and } \
\partial_t\mathrm Y
\in
\rL^p(0,T;\rH^{2,q}(\Omega_0;\R^3)).
\]
Since \(t\mapsto \psi_t\) is continuous with values in \(\rC_b^3(\R^3)\),
\autoref{lem:aux-space-lagrangian}(ii) implies
\[
\rX-\mathrm{id}
=
(\psi-\mathrm{id})\circ \mathrm Y + (\mathrm Y-\mathrm{id})
\in
\rC([0,T];\rH^{2,q}(\Omega_0;\R^3)).
\]

\smallskip

\noindent
\emph{(ii).}
Let \(t<\sigma_{\delta,\theta}\). By definition of \(\sigma_{\delta,\theta}\),
\[
\|\nabla \rX-I_3\|_{\rL^\infty(0,t;\rH^{1,q}(\Omega_0))}<\delta\le \delta_0\le \eps_*.
\]
Hence, for every \(s\in[0,t]\),
\[
\|\nabla \rX(s)-I_3\|_{\rH^{1,q}(\Omega_0)}\le \eps_*.
\]
Applying \autoref{lem:aux-space-lagrangian}(iv) with
\(
A=\nabla \rX(s,\cdot),
\,
B=I_3,
\)
we obtain
\[
\|\mathrm Z(s)-I_3\|_{\rH^{1,q}(\Omega_0)}
+
\|J(s)-1\|_{\rH^{1,q}(\Omega_0)}
\le
C_0\|\nabla \rX(s)-I_3\|_{\rH^{1,q}(\Omega_0)}.
\]
Taking the supremum over \(s\in[0,t]\), we get
\[
\|\mathrm Z-I_3\|_{\rL^\infty(0,t;\rH^{1,q}(\Omega_0))}
+
\|J-1\|_{\rL^\infty(0,t;\rH^{1,q}(\Omega_0))}
\le
C_0\delta.
\]
The bound for \(\mathrm Z\) and \(J\) themselves follows by adding the norms of
\(I_3\) and \(1\). The \(\rH^{\theta,p}\)-bound is part of the definition of
\(\sigma_{\delta,\theta}\).

\smallskip

\noindent
\emph{(iii).}
Fix \(\alpha\in(\theta,\frac12)\). Define
\[
\Lambda(t)
:=
1+\sup_{0\le s\le t}
\bigl(
\|D\psi_s\|_{\rC_b^3(\R^3)}
+
\|D\psi_s^{-1}\|_{\rC_b^3(\R^3)}
\bigr), \quad 
\rho(t)
:=
\sup_{0\le s\le t}
\|D\psi_s-I_3\|_{\rC_b^2(\R^3)},
\]
and
\[
K_\alpha(t)
:=
\sup_{0\le r<s\le t}
\frac{
\|D\psi_s-D\psi_r\|_{\rC_b^2(\R^3)}
+
\|D\psi_s^{-1}-D\psi_r^{-1}\|_{\rC_b^2(\R^3)}
}{|s-r|^\alpha}.
\]
Fix \(\beta\in(\alpha,\frac12)\). By Kunita \cite[Theorem 4.6.5]{Kun_90},
the stochastic flow \((\psi_{s,t})_{0\le s\le t\le T}\) is a flow of
\(\rC^\infty\)-diffeomorphisms (even more regular). Moreover, by \cite[Theorem 4.6.4]{Kun_90}, for each multi-index
\(|\gamma|\le 3\), the derivatives \(D_x^\gamma \psi_{s,t}(x)\) and
\(D_x^\gamma \psi_{s,t}^{-1}(x)\) satisfy suitable moment estimates for increments
in the variables \((s,t,x)\). Applying Kunita's Kolmogorov continuity criterion
\cite[Theorem 1.4.1]{Kun_90}, we obtain that, $t\mapsto \psi_t(\omega)$ and $t\mapsto \psi_t^{-1}(\omega)$ are in $
\rC^\beta([0,T];\rC_b^3(\R^3;\R^3))$, with probability one. Therefore, for almost every \(\omega\), there exists \(C_\beta(\omega)<\infty\) such that
\[
\|D\psi_s(\omega)-D\psi_r(\omega)\|_{\rC_b^2(\R^3)}
+
\|D\psi_s^{-1}(\omega)-D\psi_r^{-1}(\omega)\|_{\rC_b^2(\R^3)}
\le
C_\beta(\omega)\,|s-r|^\beta
\]
for all \(r,s\in[0,T]\). Since \(\beta>\alpha\), the map
\[
\Phi_\omega(r,s)
:=
\begin{cases}
\displaystyle
\frac{
\|D\psi_s(\omega)-D\psi_r(\omega)\|_{\rC_b^2}
+
\|D\psi_s^{-1}(\omega)-D\psi_r^{-1}(\omega)\|_{\rC_b^2}
}{|s-r|^\alpha},
& 0\le r<s\le T,\\[2ex]
0,& r=s,
\end{cases}
\]
extends continuously to \([0,T]^2\). Hence $
K_\alpha(t,\omega)
=
\max_{0\le r\le s\le t}\Phi_\omega(r,s)$ is finite, pathwise continuous, and nondecreasing in \(t\).
 Set $
B_R(t):=R+\|U\|_{\rL^p(0,t;\rH^{2,q}(\Omega_0))}.$ Applying \autoref{lem:aux-flow-lagrangian} to \eqref{eq:Y-random-ODE-rewritten}
gives
\[
\|\mathrm Y-\mathrm{id}\|_{\rL^\infty(0,t;\rH^{2,q}(\Omega_0))}
\le
\beta_R(t)\bigl(1+\|\mathrm Y-\mathrm{id}\|_{\rL^\infty(0,t;\rH^{2,q}(\Omega_0))}
+\|\mathrm Y-\mathrm{id}\|_{\rL^\infty(0,t;\rH^{2,q}(\Omega_0))}^2\bigr),
\]
where
\[
\beta_R(t):=C\,\Lambda(t)\,t^{1-\frac1p}B_R(t).
\]
Define
\[
T_{1,R}(\omega)
:=
\inf\bigl\{t\in[0,T]:\beta _R(t)\ge \tfrac18\bigr\}\wedge T.
\]
Then \(T_{1,R}(\omega)>0\), and for \(t<T_{1,R}(\omega)\), by \eqref{eq:aux-flow-smalltime} we have
\begin{equation}
\label{eq:Y-Linfty-final}
\|\mathrm Y-\mathrm{id}\|_{\rL^\infty(0,t;\rH^{2,q}(\Omega_0))}
\le
M_0(t)
:=
7\beta _R(t).
\end{equation}
Using \eqref{eq:aux-flow-derivative} and \eqref{eq:aux-flow-fractional}, we obtain
\begin{equation}
\label{eq:Y-Htheta-final}
\|\mathrm Y-\mathrm{id}\|_{\rH^{\theta,p}(0,t;\rH^{2,q}(\Omega_0))}
\le
M_\theta(t)
:=
C\,t^{1-\theta}\Lambda(t)\bigl(1+\beta _R(t)\bigr)B_R(t).
\end{equation}
In particular, \(M_0(t)\to0\) and \(M_\theta(t)\to0\) as \(t\downarrow0\).
We now estimate \(\nabla \rX-I_3\). Since $
\nabla \rX(t)=D\psi_t(\mathrm Y_t)\,\nabla \mathrm Y_t,$ we can write
\[
\nabla \rX-I_3
=
\bigl(D\psi(\mathrm Y)-I_3\bigr)\nabla \mathrm Y
+
(\nabla \mathrm Y-I_3).
\]
Set $
B(t):=D\psi_t(\mathrm Y_t)-I_3.$ By \autoref{lem:aux-space-lagrangian}(ii), we obtain
\[
\|B\|_{\rL^\infty(0,t;\rH^{1,q}(\Omega_0))}
\le
C\,\rho(t)\bigl(1+M_0(t)+M_0(t)^2\bigr).
\]
Hence, by this previous bound and \eqref{eq:Y-Linfty-final}, we get
\begin{equation}
\label{eq:A0-final}
\begin{aligned}
    &\quad \|\nabla \rX-I_3\|_{\rL^\infty(0,t;\rH^{1,q}(\Omega_0))}
\\&\le
C\,\|B\|_{\rL^\infty(0,t;\rH^{1,q}(\Omega_0))}\bigl(1+M_0(t)\bigr)+M_0(t)
\\&\le
C\bigl(
\rho(t)\bigl(1+M_0(t)+M_0(t)^2\bigr)\bigl(1+M_0(t)\bigr)
+
M_0(t) 
\bigr)=:A_0(t).
\end{aligned}
\end{equation}
Again \(A_0(t)\to0\) as \(t\downarrow0\).
Next we estimate the fractional-time norm of \(B\). For \(0\le s<r\le t\),
\[
B(r)-B(s)
=
\bigl(D\psi_r(\mathrm Y_r)-D\psi_r(\mathrm Y_s)\bigr)
+
\bigl((D\psi_r-D\psi_s)(\mathrm Y_s)\bigr).
\]
By \autoref{lem:aux-space-lagrangian}(iii),
\[
\|D\psi_r(\mathrm Y_r)-D\psi_r(\mathrm Y_s)\|_{\rH^{1,q}(\Omega_0)}
\le
C\,\Lambda(t)\bigl(1+M_0(t)\bigr)^2
\|\mathrm Y_r-\mathrm Y_s\|_{\rH^{2,q}(\Omega_0)},
\]
while \autoref{lem:aux-space-lagrangian}(ii) gives
\[
\|(D\psi_r-D\psi_s)(\mathrm Y_s)\|_{\rH^{1,q}(\Omega_0)}
\le
C\,K_\alpha(t)\,|r-s|^\alpha\,
\bigl(1+M_0(t)+M_0(t)^2\bigr).
\]
Using \eqref{eq:Y-Htheta-final}, the definition of the
\(\rH^{\theta,p}\)-norm, and \(\alpha>\theta\), we obtain
\begin{equation}
\label{eq:Btheta-final}
\begin{aligned}
  &\quad B_\theta(t):= \|B\|_{\rH^{\theta,p}(0,t;\rH^{1,q}(\Omega_0))}
\\&\le
C\bigl(
\Lambda(t)\bigl(1+M_0(t)\bigr)^2M_\theta(t)
+
K_\alpha(t)\,t^{\alpha-\theta}
\bigl(1+M_0(t)+M_0(t)^2\bigr)
+
t^{1/p}\rho(t)\bigl(1+M_0(t)+M_0(t)^2\bigr)
\bigr).
\end{aligned}
\end{equation}
Therefore, by \autoref{lem:weighted paraprod estimate},
\[
\nabla \rX-I_3
=
B\,\nabla \mathrm Y + (\nabla \mathrm Y-I_3)
\in
\rH^{\theta,p}(0,t;\rH^{1,q}(\Omega_0)),
\]
and
\begin{equation}
\label{eq:Atheta-final}
\|\nabla \rX-I_3\|_{\rH^{\theta,p}(0,t;\rH^{1,q}(\Omega_0))}
\le
C\bigl(
B_\theta(t)\bigl(1+M_0(t)\bigr)
+
\|B\|_{\rL^\infty(0,t;\rH^{1,q}(\Omega_0))}M_\theta(t)
+
M_\theta(t)
\bigr)=: A_\theta(t).
\end{equation}
Again \(A_\theta(t)\to0\) as \(t\downarrow0\).
Now choose \(t<T_{1,R}(\omega)\) such that $ A_0(t)\le \delta_0\le \eps_*.$ Then
\[
\|\nabla \rX-I_3\|_{\rL^\infty(0,t;\rH^{1,q}(\Omega_0))}
\le
A_0(t)
\le
\eps_*.
\]
Since \eqref{eq:Atheta-final} also gives $
\nabla \rX-I_3\in \rH^{\theta,p}(0,t;\rH^{1,q}(\Omega_0)),$ \autoref{lem:aux-time-inverse-det} applies to $
A:=\nabla \rX.$ Hence
\begin{equation}
\label{eq:ZJ-estimate-final}
\|\mathrm Z-I_3\|_{\rH^{\theta,p}(0,t;\rH^{1,q}(\Omega_0))}
+
\|J-1\|_{\rH^{\theta,p}(0,t;\rH^{1,q}(\Omega_0))}
\le
C_0\bigl(A_\theta(t)+t^{1/p}A_0(t)\bigr),
\end{equation}
and
\[
\|\mathrm Z-I_3\|_{\rL^\infty(0,t;\rH^{1,q}(\Omega_0))}
+
\|J-1\|_{\rL^\infty(0,t;\rH^{1,q}(\Omega_0))}
\le
C_0A_0(t).
\]
Finally define
\[
G_{R,\theta}(t)
:=
A_0(t)+C_0\bigl(A_\theta(t)+t^{1/p}A_0(t)\bigr),
\]
and
\[
T_{\delta,R,\theta}(\omega)
:=
T_{1,R}(\omega)\wedge
\inf\bigl\{t\in[0,T]:G_{R,\theta}(t)\ge \delta\bigr\}\wedge T.
\]
Since \(G_{R,\theta}(0)=0\) and \(G_{R,\theta}\) is continuous and
nondecreasing, we have \(T_{\delta,R,\theta}(\omega)>0\). If
\(t<T_{\delta,R,\theta}(\omega)\), then
\[
\|\nabla \rX-I_3\|_{\rL^\infty(0,t;\rH^{1,q}(\Omega_0))}
+
\|\mathrm Z-I_3\|_{\rH^{\theta,p}(0,t;\rH^{1,q}(\Omega_0))}
+
\|J-1\|_{\rH^{\theta,p}(0,t;\rH^{1,q}(\Omega_0))}
<
\delta.
\]
By the definition of \(\sigma_{\delta,\theta}\), this implies
\[
T_{\delta,R,\theta}(\omega)\le \sigma_{\delta,\theta}(\omega).
\]

\smallskip

\noindent
\emph{(iv).}
Define
\[
\mathcal N(t)
:=
\|\nabla \rX-I_3\|_{\rL^\infty(0,t;\rH^{1,q}(\Omega_0))}
+
\|\mathrm Z-I_3\|_{\rH^{\theta,p}(0,t;\rH^{1,q}(\Omega_0))}
+
\|J-1\|_{\rH^{\theta,p}(0,t;\rH^{1,q}(\Omega_0))}.
\]
For each fixed \(t\), the random variable \(\mathcal N(t)\) is
\(\mathcal F_t\)-measurable. Indeed, since \(\rH^{1,q}(\Omega_0)\) is separable
and the trajectories are continuous,
\[
\|\nabla \rX-I_3\|_{\rL^\infty(0,t;\rH^{1,q}(\Omega_0))}
=
\sup_{r\in[0,t]\cap\mathbb Q}
\|\nabla \rX(r)-I_3\|_{\rH^{1,q}(\Omega_0)},
\]
which is \(\mathcal F_t\)-measurable. The
\(\rH^{\theta,p}(0,t;\rH^{1,q}(\Omega_0))\)-norms are \(\mathcal F_t\)-measurable because
the restrictions of adapted continuous processes to \([0,t]\) are
\(\mathcal B([0,t])\otimes\mathcal F_t\)-measurable and the defining integrals
are measurable by Tonelli's theorem.
Moreover, \(t\mapsto \mathcal N(t)\) is pathwise nondecreasing, since each of the
three norms is monotone with respect to the time interval, for every \(t\in[0,T)\),
\[
\{\sigma_{\delta,\theta}\le t\}
=
\bigl\{\exists r\in[0,t]: \mathcal N(r)\ge \delta\bigr\}
=
\{\mathcal N(t)\ge \delta\}
\in \mathcal F_t.
\]
For \(t=T\), the claim is trivial since
\(\{\sigma_{\delta,\theta}\le T\}=\Omega\in\mathcal F_T\).
Therefore \(\sigma_{\delta,\theta}\) is an \((\mathcal F_t)_t\)-stopping time.
Now define the adapted, continuous, nondecreasing processes
\[
\beta_R(t):=C\,\Lambda(t)\,t^{1-\frac1p}B_R(t) \ \text{ and } \
G_{R,\theta}(t):=
A_0(t)+C_0\bigl(A_\theta(t)+t^{1/p}A_0(t)\bigr),
\]
where \(\Lambda,\rho,K_\alpha,B_R,M_0,M_\theta,A_0,B_\theta,A_\theta\) are
given by the same explicit formulas as in part \emph{(iii)}. Set
\[
\tau_{1,R}
:=
\inf\bigl\{t\in[0,T]:\beta_R(t)\ge \tfrac18\bigr\}\wedge T,
\]
and
\[
\tau_{\delta,R,\theta}
:=
\tau_{1,R}\wedge
\inf\bigl\{t\in[0,T]:G_{R,\theta}(t)\ge \delta\bigr\}\wedge T.
\]
Since these processes start from \(0\) and are continuous,
\[
\tau_{\delta,R,\theta}>0
\ \text{ a.s.}
\]
Finally, for almost every \(\omega\), part \emph{(iii)} yields
\[
\tau_{\delta,R,\theta}(\omega)\le \sigma_{\delta,\theta}(\omega).
\]
This completes the proof.
\end{proof}

\noindent
\noindent
We now introduce the unknowns in Lagrangian coordinates by composing the
Eulerian density and velocity with the stochastic flow map \(\rX\). More
precisely, for \((t,y)\in[0,T]\times\Omega_0\), we set
\begin{equation}
\label{eq:pullback-unknowns}
\bar{\varrho}(t,y)
:=
\varrho\bigl(t,\rX(t,y)\bigr) \ \text{ and }\ 
\bar{u}(t,y)
:=
u\bigl(t,\rX(t,y)\bigr).
\end{equation}
In other words, \(\bar\varrho\) and \(\bar u\) denote the density and velocity
in the reference coordinates associated with the initial domain \(\Omega_0\).
Hence the original free-boundary problem on the moving domain \(\Omega_t\) is
transformed into a system posed on the fixed reference domain \(\Omega_0\).
We next identify the transformed density in terms of the Jacobian of the
stochastic Lagrangian flow. Since the continuity equation and the flow are both
written in Stratonovich form, the usual chain rule applies. Thus,
we obtain
\[
\d\bar\varrho
=
-\bar\varrho\,
\operatorname{div}_x u(t,\rX(t,y))\,\d t
-
\bar\varrho
\sum_{k=1}^K
\operatorname{div}Q_k(\rX(t,y))\circ\d W_t^k .
\]
Using
\[
\operatorname{div}_x u(t,\rX(t,y))
=
\nabla\bar u(t,y):\mathrm Z(t,y)^\top,
\]
this becomes
\[
\d\bar\varrho
=
-\bar\varrho\,
\nabla\bar u:\mathrm Z^\top\,\d t
-
\bar\varrho
\sum_{k=1}^K
\operatorname{div}Q_k(\rX)\circ\d W_t^k .
\]
On the other hand, the Jacobian $J$
satisfies
\[
\d J
=
J\,\nabla\bar u:\mathrm Z^\top\,\d t
+
J
\sum_{k=1}^K
\operatorname{div}Q_k(\rX)\circ\d W_t^k
\ \text{ with }\
J(0,\cdot)=1.
\]
Since the transport vector fields are divergence-free, 
the stochastic terms in the last two identities vanish. Hence
\[
\partial_t\bar\varrho
+
\bar\varrho\,\nabla\bar u:\mathrm Z^\top
=0
\]
and
\[
\partial_t J
=
J\,\nabla\bar u:\mathrm Z^\top
\ \text{ with }\
J(0,\cdot)=1.
\]
Consequently,
\[
\partial_t(\bar\varrho J)=0
\ \text{ and therefore }\
\bar\varrho(t,y)J(t,y)=\varrho_0(y).
\]
In particular, the transformed density is completely determined by the initial
density and the Jacobian of the stochastic Lagrangian flow, namely
\begin{equation}
\label{eq:rhoJ}
\bar\varrho(t,y)=\frac{\varrho_0(y)}{J(t,y)}.
\end{equation}
Thus the transformed problem may be written only in terms of the velocity
\(\bar u\), with the density always understood through
\eqref{eq:rhoJ}.
In this way we obtain
\begin{equation}
\left\{
\begin{aligned}
\d \bar u
-\varrho_0^{-1}\operatorname{div}\mathbb S(\nabla \bar u)\,\d t
&=
F_u(\bar u)\,\d t
+
f_{\mathrm{sto}}\,\d B_t,
&& \text{in } \Omega_0\times(0,T),
\\[0.4em]
\mathbb S(\nabla \bar u)N
&=
F_\Gamma(\bar u),
&& \text{on } \Gamma_0\times(0,T),
\\[0.4em]
\bar u(0)&=u_0,
&& \text{in } \Omega_0.
\end{aligned}
\right.
\label{eq:transformed-FB-CNSE-lame-divrho}
\end{equation}
Here \(N\) denotes the outer unit normal vector on the fixed boundary
\(\Gamma_0\), and, abusing notation, the transformed additive noise coefficient
is again denoted by \(f_{\mathrm{sto}}(t,y)\).

The nonlinear right-hand side in the velocity equation is given by
\begin{equation}
\label{eq:transformed-FB-CNSE-lame-divrho-F2}
\begin{aligned}
( F_u(\bar u))_i
&=
\frac{J-1}{\varrho_0}
\Bigl(
\mu\sum_k \frac{\partial^2 \bar u_i}{\partial y_k^2}
+
(\mu+\lambda)\sum_j \frac{\partial^2 \bar u_j}{\partial y_i\partial y_j}
\Bigr)
\\
&\quad
+
\frac{\mu J}{\varrho_0}\Bigl(
\sum_{j,k,l}
\frac{\partial^2 \bar u_i}{\partial y_k \partial y_l}
\bigl(\mathrm Z_{k,j}-\delta_{k,j}\bigr)\mathrm Z_{l,j}
+
\sum_{k,l}
\frac{\partial^2 \bar u_i}{\partial y_k \partial y_l}
\bigl(\mathrm Z_{l,k}-\delta_{l,k}\bigr)
+
\sum_{j,k,l}
\mathrm Z_{l,j}\frac{\partial \bar u_i}{\partial y_k}
\frac{\partial \mathrm Z_{k,j}}{\partial y_l}
\Bigr)
\\
&\quad
+
\frac{(\mu+\lambda)J}{\varrho_0}\Bigl(
\sum_{j,k,l}
\frac{\partial^2 \bar u_j}{\partial y_k \partial y_l}
\bigl(\mathrm Z_{k,j}-\delta_{k,j}\bigr)\mathrm Z_{l,i}
+
\sum_{j,l}
\frac{\partial^2 \bar u_j}{\partial y_j \partial y_l}
\bigl(\mathrm Z_{l,i}-\delta_{l,i}\bigr)
+
\sum_{j,k,l}
\mathrm Z_{l,i}\frac{\partial \bar u_j}{\partial y_k}
\frac{\partial \mathrm Z_{k,j}}{\partial y_l}
\Bigr)
\\
&\quad
-
\frac{J}{\varrho_0}
\sum_j
\mathrm Z_{j,i}
\frac{\partial}{\partial y_j}
p\left(\frac{\varrho_0}{J}\right).
\end{aligned}
\end{equation}
The boundary nonlinearity is
\begin{equation}
\label{eq:transformed-FB-CNSE-lame-divrho-FGamma}
\begin{aligned}
(F_\Gamma(\bar u))_i
&=
\mu\sum_j \frac{\partial \bar u_i}{\partial y_j}
\Bigl(N_j-\sum_l J\,\mathrm Z_{l,j}N_l\Bigr)
+
\mu\sum_j \frac{\partial \bar u_j}{\partial y_i}
\Bigl(N_j-\sum_l J\,\mathrm Z_{l,j}N_l\Bigr)
\\
&\quad
+
\lambda\sum_k \frac{\partial \bar u_k}{\partial y_k}
\Bigl(N_i-\sum_m J\,\mathrm Z_{m,i}N_m\Bigr)
+
\mu\sum_{j,k,l}
\Bigl(\delta_{k,j}-\mathrm Z_{k,j}\Bigr)
\frac{\partial \bar u_i}{\partial y_k}\,
J\,\mathrm Z_{l,j}N_l
\\
&\quad
+
\mu\sum_{j,k,l}
\Bigl(\delta_{k,i}-\mathrm Z_{k,i}\Bigr)
\frac{\partial \bar u_j}{\partial y_k}\,
J\,\mathrm Z_{l,j}N_l
+
\lambda\Bigl(
\sum_k \frac{\partial \bar u_k}{\partial y_k}
-
\sum_{k,l}\mathrm Z_{l,k}\frac{\partial \bar u_k}{\partial y_l}
\Bigr)
\sum_m J\,\mathrm Z_{m,i}N_m
\\
&\quad
+
\left(p\left(\frac{\varrho_0}{J}\right)-p_{\mathrm{ext}}\right)
\sum_j J\,\mathrm Z_{j,i}N_j .
\end{aligned}
\end{equation}
We conclude this section by deriving a difference estimate for two stochastic Lagrangian transforms \(\mathrm Z_1\) and \(\mathrm Z_2\) associated with two velocity fields. This estimate will play an essential role in the contraction mapping argument used later to prove local existence for the transformed system.

\begin{cor}[Pathwise difference estimates for the stochastic Lagrangian transform]
\label{cor:pathwise-difference-stochastic-lagrangian}\mbox{} \\
Under the same assumptions and notation as in
\autoref{prop:pathwise-stochastic-lagrangian}, let
\[
v_1(\omega),\,v_2(\omega)\in \mathcal E_{1,T}\ \text{ with } \
\|v_1(\omega)\|_{\mathcal E_{1,T}},\,
\|v_2(\omega)\|_{\mathcal E_{1,T}}\le R,
\]
and set
\[
\bar u_i(\omega):=v_i(\omega)+U(\omega) \ \text{ for }\  i=1,2,
\]
where \(U\) denotes the stochastic convolution from
\autoref{lem:stochastic-U-additive}. Let
\(
\rX_i,\, \mathrm Z_i,\, J_i
\text{ for } i=1,2,
\)
be the associated stochastic Lagrangian quantities from
\autoref{prop:pathwise-stochastic-lagrangian}. Then, on \([0,\tau(\omega)]\), where \(\tau=\tau_{\delta,R,\theta}\), there exists a constant \(C(\omega)>0\), depending only on \(p\), \(q\), \(\eps\), \(\Omega_0\), \(\delta_0\), \(R\), and on the fixed pathwise quantities \(U(\omega)\) and \(\psi(\omega)\), such that
\begin{equation*}
\begin{aligned}
\|\mathrm Z_1-\mathrm Z_2\|_{\rL^\infty(0,\tau;\rH^{1,q}(\Omega_0))}
+
\|J_1-J_2\|_{\rL^\infty(0,\tau;\rH^{1,q}(\Omega_0))}
&\le
C(\omega)\,\tau^{1-\nicefrac1p}
\|v_1-v_2\|_{\mathcal E_{1,\tau}},
\\
\|\mathrm Z_1-\mathrm Z_2\|_{\rH^{\theta,p}(0,\tau;\rH^{1,q}(\Omega_0))}
+
\|J_1-J_2\|_{\rH^{\theta,p}(0,\tau;\rH^{1,q}(\Omega_0))}
&\le
C(\omega)\,\tau^{\alpha-\theta}
\|v_1-v_2\|_{\mathcal E_{1,\tau}},
\end{aligned}
\end{equation*}
where \(\alpha\in(\theta,\nicefrac12)\).
\end{cor}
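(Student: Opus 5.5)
We reuse the decomposition from the proof of \autoref{prop:pathwise-stochastic-lagrangian}(i): write \(\rX_i=\psi\circ\mathrm Y_i\), where \(\psi\) is the common noise-only flow. The noise is the same for both transforms, so all stochastic effects sit in \(\psi\), and the difference \(\rX_1-\rX_2\) comes only from the random ODEs \eqref{eq:Y-random-ODE-rewritten} for \(\mathrm Y_1,\mathrm Y_2\). Subtracting them gives
\[
\mathrm Y_1(t)-\mathrm Y_2(t)=\int_0^t \Bigl(D\psi_s(\mathrm Y_1(s))^{-1}-D\psi_s(\mathrm Y_2(s))^{-1}\Bigr)\bar u_1(s)\,\d s+\int_0^t D\psi_s(\mathrm Y_2(s))^{-1}(v_1-v_2)(s)\,\d s .
\]
The first term is controlled as follows. \autoref{lem:aux-space-lagrangian}(iii) is a Lipschitz estimate for composition with \(D\psi_s^{-1}\in\rC_b^3\). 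Together with the uniform bound \(M_0(\tau)\le 7/8\) valid on \([0,\tau]\), it gives a bound by \(C\Lambda(\tau)\|\mathrm Y_1-\mathrm Y_2\|_{\rL^\infty(0,t;\rH^{2,q})}\int_0^t\|\bar u_1\|_{\rH^{2,q}}\). The second term is bounded using Hölder in time, since \(\rH^{1,q}\) and \(\rH^{2,q}\) are algebras (\(q>3\)). This yields
\[
\|\mathrm Y_1-\mathrm Y_2\|_{\rL^\infty(0,t;\rH^{2,q})}\le \beta_R(t)\|\mathrm Y_1-\mathrm Y_2\|_{\rL^\infty(0,t;\rH^{2,q})}+C\Lambda(t)t^{1-\nicefrac1p}\|v_1-v_2\|_{\rL^p(0,t;\rH^{2,q})}.
\]
Since \(\beta_R\le\nicefrac18\) before \(\tau_{1,R}\ge\tau\), the first term can be absorbed, giving the \(\rL^\infty\) bound \(C\tau^{1-\nicefrac1p}\|v_1-v_2\|_{\mathcal E_{1,\tau}}\). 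For the time regularity, \(\partial_t(\mathrm Y_1-\mathrm Y_2)\) is exactly the integrand above. The same estimates give \(\|\partial_t(\mathrm Y_1-\mathrm Y_2)\|_{\rL^p(0,\tau;\rH^{2,q})}\le C\|v_1-v_2\|_{\mathcal E_{1,\tau}}\). Interpolating via \eqref{eq:aux-flow-fractional} with vanishing initial value, the \(\rH^{\theta,p}(0,\tau;\rH^{2,q})\) norm of \(\mathrm Y_1-\mathrm Y_2\) is bounded by \(C\tau^{1-\theta}\|v_1-v_2\|_{\mathcal E_{1,\tau}}\).

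Next we pass to \(\nabla\rX_i=D\psi(\mathrm Y_i)\nabla\mathrm Y_i\) and write
\[
\nabla\rX_1-\nabla\rX_2=\bigl(D\psi(\mathrm Y_1)-D\psi(\mathrm Y_2)\bigr)\nabla\mathrm Y_1+D\psi(\mathrm Y_2)\nabla(\mathrm Y_1-\mathrm Y_2).
\]
The \(\rL^\infty(\rH^{1,q})\) bound follows at once from \autoref{lem:aux-space-lagrangian}(ii),(iii). For the \(\rH^{\theta,p}\) bound we treat each product with \autoref{lem:weighted paraprod estimate}. This requires the \(\rH^{\theta,p}(0,\tau;\rH^{1,q})\) norm of \(D\psi(\mathrm Y_1)-D\psi(\mathrm Y_2)\). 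We estimate it by the same splitting of increments used for \(B_\theta\) in \eqref{eq:Btheta-final}, now applied to the difference \(\int_0^1 D^2\psi_r(\mathrm Y_2+\lambda(\mathrm Y_1-\mathrm Y_2))\,\d\lambda\,(\mathrm Y_1-\mathrm Y_2)\). Time increments of \(D^2\psi_r\) cost \(K_\alpha(\tau)|r-s|^\alpha\), and integrating them against \(|r-s|^{-1-\theta p}\) produces the factor \(\tau^{\alpha-\theta}\). The remaining increments involve \(\mathrm Y_i\) or \(\mathrm Y_1-\mathrm Y_2\) and are handled by the bounds from the first paragraph; they produce powers \(\tau^{1-\theta}\) or \(\tau^{1-\nicefrac1p}\). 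All of these powers are at least \(\tau^{\alpha-\theta}\) up to a \(T\)-dependent constant.

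Finally we use that inversion and the determinant are smooth maps near \(I_3\). Hence \(\mathrm Z_1-\mathrm Z_2=-\mathrm Z_1(\nabla\rX_1-\nabla\rX_2)\mathrm Z_2\), and \(J_1-J_2\) is a polynomial multilinear expression in \(\nabla\rX_1-\nabla\rX_2\) with coefficients built from \(\nabla\rX_i\). Applying the \(\rH^{1,q}\) algebra property with the bounds of \autoref{prop:pathwise-stochastic-lagrangian}(ii) gives the \(\rL^\infty\) estimate. Applying \autoref{lem:weighted paraprod estimate} with the \(\rH^{\theta,p}\) bounds \(\le\delta\) on \(\mathrm Z_i,J_i\) gives the \(\rH^{\theta,p}\) estimate. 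Alternatively, this is a difference version of \autoref{lem:aux-time-inverse-det}.

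The main obstacle is the fractional-in-time estimate of the composition difference \(D\psi(\mathrm Y_1)-D\psi(\mathrm Y_2)\). Here only Hölder-\(\beta\) time regularity of \(\psi\) with \(\beta<\nicefrac12\) is available, and this is why the gain is only \(\tau^{\alpha-\theta}\). The main task is to check that \autoref{lem:aux-space-lagrangian} also controls compositions of \(D^2\psi\) with convex combinations of \(\mathrm Y_1,\mathrm Y_2\). Those combinations remain close to the identity in \(\rH^{2,q}\), so the constants stay uniform.
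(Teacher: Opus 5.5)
The paper states this corollary without proof. Your argument is the difference version of the proof of \autoref{prop:pathwise-stochastic-lagrangian}(iii): a common noise flow $\psi$, Lipschitz estimates for the random ODEs for $\mathrm Y_i$, the fractional-in-time splitting used for $B_\theta$, and a difference form of \autoref{lem:aux-time-inverse-det}. It is correct up to one repair.

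\emph{The absorption step.} The coefficient in front of $\|\mathrm Y_1-\mathrm Y_2\|_{\rL^\infty(0,t;\rH^{2,q})}$ contains the Lipschitz constant of \autoref{lem:aux-space-lagrangian}(iii) and the factor $(1+2M_0)^2$. So it equals $C'\beta_R(t)$ with a constant $C'$ you do not control, and $\beta_R\le\nicefrac18$ does not justify absorbing it. Instead, apply Gronwall to the pointwise-in-time form of the same inequality, with kernel $C\Lambda(\tau)\|\bar u_1(s)\|_{\rH^{2,q}}$. This costs only a factor $\exp\bigl(C\Lambda(\tau)\tau^{1-\nicefrac1p}B_R(\tau)\bigr)$, which is admissible in $C(\omega)$.

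\emph{A side remark on the rate.} In your own bookkeeping, every time increment of $\psi$ is multiplied by $\|\mathrm Y_1-\mathrm Y_2\|_{\rL^\infty(0,\tau;\rH^{2,q})}\le C\tau^{1-\nicefrac1p}\|v_1-v_2\|_{\mathcal E_{1,\tau}}$. You therefore actually obtain the factor $\tau^{\min\{1-\theta,\,1-\nicefrac1p\}}$, which is stronger than the stated $\tau^{\alpha-\theta}$. So, contrary to your last paragraph, the limited H\"older regularity of $\psi$ is not what restricts the rate for the difference estimate.
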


\section{The linearized transformed system}
\noindent
In this section, we analyze the linearized system associated with the transformed free-boundary problem \eqref{eq:transformed-FB-CNSE-lame-divrho}. 
Our aim is to separate the deterministic optimal-data part from the stochastic additive forcing.
To this end, we fix \(\omega\in\Omega\). All objects below are therefore understood pathwise at the sample \(\omega\), and we display the dependence on \(\omega\) explicitly at first. We begin with the linearized transformed velocity system
\begin{equation}
\left\{
\begin{aligned}
\big [\d \bar u - \varrho_0^{-1}\operatorname{div}\mathbb S(\nabla \bar u) \big ](\omega) \,\d t &= f_u(\omega)\,\d t + f_{\mathrm{sto}}(\omega)\,\d B_t(\omega),
&& \text{in } \Omega_0\times(0,T),
\\
\mathbb S(\nabla \bar u(\omega))N &= g(\omega),
&& \text{on } \Gamma_0\times(0,T),
\\
\bar u(\omega,0) &= u_0(\omega),
&& \text{in } \Omega_0,
\end{aligned}
\right.
\label{eq:linearized-transformed-system}
\end{equation}
where \(f_u(\omega)\), \(f_{\mathrm{sto}}(\omega)\), and \(g(\omega)\) are regarded as given.
The density is recovered from the stochastic Lagrangian flow. More precisely, once \(\bar u\) is known, the corresponding flow \(\rX\), inverse gradient \(\mathrm Z\), and Jacobian \(J\) are determined, and the transformed density is given by
\[
\bar\varrho(t,y)=\frac{\varrho_0(y)}{J(t,y)}.
\]
To analyze the velocity equation, we introduce the differential operators
\begin{equation}
\label{eq:def-calA}
(\mathcal A(\omega,x,D)u)_i
:=
-\frac{1}{\varrho_0(\omega,x)}
\sum_{j=1}^3
\partial_j
\Bigl(
\mu(\partial_j u_i+\partial_i u_j)
+\lambda\,\delta_{ij}\operatorname{div}u
\Bigr) \ \text{ for } \  i=1,2,3,
\end{equation}
that is,
\[
\mathcal A(\omega,x,D)u
=
-\varrho_0(\omega,x)^{-1}\operatorname{div}\mathbb S(\nabla u)
=
-\frac{\mu}{\varrho_0(\omega,x)}\Delta u
-\frac{\mu+\lambda}{\varrho_0(\omega,x)}\nabla\operatorname{div}u,
\]
and
\begin{equation}
\label{eq:def-calB}
(\mathcal B(x,D)u)_i
:=
\sum_{j=1}^3
\Bigl(
\mu(\partial_j u_i+\partial_i u_j)
+\lambda\,\delta_{ij}\operatorname{div}u
\Bigr)N_j \ \text{ for } \  i=1,2,3,
\end{equation}
i.e.
\[
\mathcal B(x,D)u = \mathbb S(\nabla u)N.
\]
For notational convenience, in the remainder of this section we suppress the dependence on \(\omega\), but all objects and arguments are understood pathwise for the fixed sample \(\omega\in\Omega\).
We split the velocity into
\[
\bar u = v + U,
\]
where \(v\) captures the deterministic inhomogeneous boundary data and \(U\) captures the stochastic forcing.
More precisely, \(v\) solves pathwise the deterministic Lam\'e system
\begin{equation}
\left\{
\begin{aligned}
\partial_t v + \mathcal A(x,D)v &= f_u,
&& \text{in } \Omega_0\times(0,T),
\\
\mathcal B(x,D)v &= g,
&& \text{on } \Gamma_0\times(0,T),
\\
v(0) &= u_0,
&& \text{in } \Omega_0,
\end{aligned}
\right.
\label{eq:v-system}
\end{equation}
while \(U\) solves the stochastic problem with homogeneous boundary conditions
\begin{equation}
\left\{
\begin{aligned}
\d U + A_q U\,\d t &= f_{\mathrm{sto}}\,\d B_t,
&& \text{in } (0,T),
\\
U(0) &= 0.
\end{aligned}
\right.
\label{eq:U-system}
\end{equation}
Here \(A_q=A_q(\omega)\) denotes the realization of \(\mathcal A(\omega,x,D)\) in \(\rX_0:=\rLq(\Omega_0;\R^3)\) with homogeneous boundary condition \(\mathcal B(x,D)u=0\), i.e.
\[
A_q u := \mathcal A(\omega,x,D)u,
\quad
\rD(A_q)
:=
\bigl\{
u\in \rH^{2,q}(\Omega_0;\R^3): \mathcal B(x,D)u=0 \text{ on } \Gamma_0
\bigr\}.
\]
We further set
\[
\rX_1 := \rD(A_q),
\quad
\rX_\gamma := (\rX_0,\rX_1)_{1-\nicefrac1p,p} \ \text{ and } \
\rX_{\theta} := [\rX_0,\rX_1]_{\theta},
\]
where \((\cdot,\cdot)_{\theta,p}\) denotes the real interpolation functor and \([\cdot,\cdot]_\theta\) denotes the complex interpolation functor for \(\theta\in(0,1)\) and \(p\in(1,\infty)\).
The following assumptions collect the natural data spaces for \eqref{eq:v-system}.

\begin{asu}\label{asu:v-data}
Let \(p,q \in (1,\infty)\).
Given \(f_u\), \(u_0\), and \(g\), assume
\begin{enumerate}[(i)]
    \item
    \(
    f_u \in \rLp(0,T;\rX_0)
    \eqqcolon \mathbb E_{0,T},
    \)
    \item
    \(
    u_0 \in \rX_\gamma,
    \)
    \item
    \(
    g \in \rF_{pq}^{ \nicefrac12-\nicefrac{1}{2q}}(0,T;\rLq(\Gamma_0;\R^3))
    \cap
    \rLp(0,T;\rB_{qq}^{1- \nicefrac{1}{q}}(\Gamma_0;\R^3))
    \eqqcolon \mathbb G_T,
    \)
    \item if \(\frac{1}{2} - \frac{1}{2q} > \frac{1}{p}\), then
    \(
    \mathcal B(x,D)u_0 = g(0)
     \text{ on } \Gamma_0.
    \)
\end{enumerate}
\end{asu}
\noindent
The next result provides the optimal data theory for the deterministic Lam\'e problem which is inspired by \cite{DHP:07}.

\begin{lem}\label{lem:deterministic-v}
Let \(p \in (2,\infty)\), $q\in (3,\infty)$, \(\Omega_0\subset\R^3\) be a bounded domain with boundary of class \(\rC^2\), and assume
\[
\varrho_0 \in \rH^{1,q}(\Omega_0) \ \text{ and } \
0<\varrho_\ast \le \varrho_0(x) \le \varrho^\ast < \infty
\quad \text{for all } x\in\overline{\Omega}_0.
\]
Then the problem \eqref{eq:v-system} admits a unique strong solution
\[
v \in \mathbb E_{1,T}
:=
\rH^{1,p}(0,T;\rX_0)
\cap
\rLp(0,T;\rX_1)
\]
if and only if the data \((f_u,u_0,g)\) satisfy \autoref{asu:v-data}.
In that case there exists a constant \(C>0\) such that
\[
\|v\|_{\mathbb E_{1,T}}
\le
C\bigl(
\|f_u\|_{\mathbb E_{0,T}}
+
\|u_0\|_{ \rX_{\gamma}}
+
\|g\|_{\mathbb G_T}
\bigr).
\]
\end{lem}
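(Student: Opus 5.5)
The plan is to treat \eqref{eq:v-system} as a parabolic boundary value problem with a variable-coefficient, top-order principal part $\mathcal A(x,D)=-\varrho_0^{-1}\operatorname{div}\mathbb S(\nabla\cdot)$ and a Neumann-type (stress) boundary operator $\mathcal B(x,D)$. We then apply the maximal $\rL^p$-$\rL^q$ regularity theory of Denk--Hieber--Pr\"uss \cite{DHP:07} for general parabolic initial-boundary value problems with inhomogeneous boundary data. That theory gives exactly the equivalence ``unique solution in $\mathbb E_{1,T}$ $\iff$ data in the optimal trace spaces plus compatibility''. The proof therefore reduces to checking the structural hypotheses of \cite{DHP:07} and identifying the data spaces with those in \autoref{asu:v-data}.

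\textbf{Step 1: regularity of coefficients.} Since $q>3$, the space $\rH^{1,q}(\Omega_0)$ embeds into $\rC^{1-3/q}(\overline\Omega_0)$. Hence $\varrho_0$ is H\"older continuous, and because $\varrho_0\ge\varrho_\ast>0$, the coefficient $\varrho_0^{-1}$ is bounded and uniformly continuous on $\overline\Omega_0$. The operator $\mathcal A$ is of second order and written in nondivergence form (the factor $\varrho_0^{-1}$ sits outside the derivatives), with top-order coefficients in $\rC(\overline\Omega_0)$ and no lower-order terms. The boundary operator $\mathcal B$ has constant coefficients times $N\in\rC^1(\Gamma_0)$, since $\Gamma_0$ is $\rC^2$. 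This is precisely the regularity class required in \cite{DHP:07} for order $2m=2$ and boundary order $m_j=1$.

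\textbf{Step 2: parameter ellipticity and the Lopatinskii--Shapiro condition.} The principal symbol is $\varrho_0(x)^{-1}\bigl(\mu|\xi|^2 I+(\mu+\lambda)\xi\xi^\top\bigr)$, whose eigenvalues are $\varrho_0^{-1}\mu|\xi|^2$ and $\varrho_0^{-1}(2\mu+\lambda)|\xi|^2$. These are positive because $\mu>0$ and $2\mu+\lambda>2\mu+3\lambda\cdot\tfrac13>0$ (using $2\mu+3\lambda>0$ and $\mu>0$). Hence the operator is normally elliptic, with angle of ellipticity $0$. Since the multiplication by the scalar $\varrho_0^{-1}(x_0)$ only rescales the spectral parameter, the Lopatinskii--Shapiro condition for the frozen-coefficient half-space problem $\lambda v+\mathcal A(x_0,D)v=0$, $\mathcal B v=0$ reduces to that of the standard Lam\'e system with traction boundary condition.

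I expect this step to be the main obstacle. The approach I would try first is an energy argument: a bounded ($\rL^2$ in the normal variable) solution of the ODE system in $x_n$ obtained by Fourier transform in the tangential variables must vanish. Testing with $\bar v$ and integrating by parts gives
\[
\lambda\varrho_0^{-1}\!\!\int|v|^2+\varrho_0^{-1}\!\!\int\Bigl(\tfrac{\mu}{2}|\nabla v+\nabla v^\top|^2+\lambda|\operatorname{div}v|^2\Bigr)=0 .
\]
The quadratic form is coercive on the symmetric gradient because $2\mu+3\lambda>0$. For $\operatorname{Re}\lambda\ge0$, $\lambda\neq0$, and tangential frequency $\xi'\neq0$, this forces $v=0$, using Korn-type positivity in the Fourier variable.

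\textbf{Step 3: identification of the spaces and conclusion.} The trace spaces of \cite{DHP:07} give $u_0\in\rB^{2-2/p}_{qp}=\rX_\gamma$ (modulo the boundary condition, which is encoded in (iv)). For boundary order $1$, the trace space is $\rF^{1/2-1/(2q)}_{pq}(0,T;\rLq(\Gamma_0))\cap\rLp(0,T;\rB^{1-1/q}_{qq}(\Gamma_0))=\mathbb G_T$. The compatibility condition $\mathcal B u_0=g(0)$ is required exactly when $\tfrac12-\tfrac1{2q}>\tfrac1p$, i.e.\ when the time trace of $g$ exists. Necessity follows from the trace theorems applied to $v\in\mathbb E_{1,T}$. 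Sufficiency, uniqueness and the estimate follow from \cite{DHP:07} through localization, perturbation of the frozen coefficients (small oscillation of $\varrho_0^{-1}$ on small balls, by uniform continuity), and a Neumann series on a short time interval iterated to $[0,T]$. The resulting constant $C$ depends on $T$, on $\Omega_0$, and on $\varrho_\ast,\varrho^\ast,\|\varrho_0\|_{\rH^{1,q}}$.
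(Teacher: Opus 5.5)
Your proposal is correct and follows the same route as the paper: both verify the hypotheses of \cite{DHP:07} through the continuity of $\varrho_0^{-1}$ and $N\in\rC^1(\Gamma_0)$, the eigenvalues $\mu|\xi|^2/\varrho_0$ and $(2\mu+\lambda)|\xi|^2/\varrho_0$ of the principal symbol, and an energy identity with the positive definite form $E\mapsto 2\mu|E|^2+\lambda(\operatorname{tr}E)^2$ for the Lopatinskii--Shapiro condition. Only small repairs are needed: write $2\mu+\lambda=\tfrac13(2\mu+3\lambda)+\tfrac43\mu>0$ (your displayed chain compares $2\mu+\lambda$ with itself); run the energy argument for every $(\xi',\eta)\neq(0,0)$ with $\mathrm{Re}\,\eta\ge0$, where $\eta$ is the spectral parameter (for $\mathrm{Re}\,\eta=0$ it only yields a vanishing symmetric gradient, and it is the decay of $v$, not $\xi'\neq0$, that then forces $v=0$); and add, as the paper does, that the decaying solutions form a $3$-dimensional space, so that the trivial kernel gives solvability for every boundary datum.
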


\begin{proof}
We verify the assumptions of the optimal \( \rLp \)-\( \rLq \)-theory for parabolic boundary value problems with inhomogeneous boundary data, see \cite{DHP:07}.
\begin{step}[Verification of the coefficient assumptions] \mbox{} \\
Since \(q>3\), Sobolev's embedding theorem yields
\[
\rH^{1,q}(\Omega_0)\hookrightarrow \rC(\overline{\Omega}_0).
\]
Hence \(\varrho_0\) admits a continuous representative on \(\overline{\Omega}_0\), and therefore so does \(\varrho_0^{-1}\). In particular, the coefficients of the principal part of \(\mathcal A(x,D)\) are continuous on \(\overline{\Omega}_0\). Moreover, \(\mathcal A(x,D)\) has no lower-order terms, so the interior smoothness assumptions are trivially satisfied.
Likewise, the coefficients of the boundary operator \(\mathcal B(x,D)\) are determined by the outer unit normal \(N\). Since \(\Gamma_0\) is of class \(\rC^2\), we have \(N\in \rC^1(\Gamma_0;\R^3)\), and hence the coefficients of the first-order boundary operator \(\mathcal B(x,D)\) are continuous on \(\Gamma_0\). Thus the boundary smoothness assumptions are satisfied as well.
\end{step}
\begin{step}[Parameter-ellipticity of the principal symbol]\mbox{} \\
The principal symbol of \(\mathcal A(x,D)\) is given by
\[
\mathcal A^\#(x,\xi)
=
\frac{1}{\varrho_0(x)}
\Bigl(
\mu |\xi|^2 I_3
+
(\mu+\lambda)\,\xi\otimes\xi
\Bigr),
\quad x\in \overline{\Omega}_0,\ \xi\in \R^3.
\]
Let \(a\in \R^3\). Then
\[
\mathcal A^\#(x,\xi)a
=
\frac{\mu}{\varrho_0(x)}|\xi|^2 a
+
\frac{\mu+\lambda}{\varrho_0(x)}\,\xi(\xi\cdot a).
\]
Hence, if \(a\perp \xi\), then
\[
\mathcal A^\#(x,\xi)a
=
\frac{\mu}{\varrho_0(x)}|\xi|^2 a,
\]
whereas for \(a=\xi\) one obtains
\[
\mathcal A^\#(x,\xi)\xi
=
\frac{2\mu+\lambda}{\varrho_0(x)}|\xi|^2 \xi.
\]
Therefore the eigenvalues of \(\mathcal A^\#(x,\xi)\) are
\[
\frac{\mu}{\varrho_0(x)}|\xi|^2
\quad \text{with multiplicity }2 \ \text{ and } \
\frac{2\mu+\lambda}{\varrho_0(x)}|\xi|^2
\quad \text{with multiplicity }1.
\]
Since \(\mu>0\), \(2\mu+\lambda>0\), and \(\varrho_0(x)\ge \varrho_\ast>0\), all eigenvalues are strictly positive for \(\xi\neq 0\). In particular,
\[
\sigma(\mathcal A^\#(x,\xi)) \subset (0,\infty)
\subset \C_+,
\quad x\in \overline{\Omega}_0,\ \xi\neq 0.
\]
Thus \(\mathcal A(x,D)\) is normally elliptic. In fact, the symbol is parameter-elliptic of angle \(0\).
\end{step}
\begin{step}[Lopatinskii--Shapiro condition]\mbox{} \\
Fix \(x_0\in \Gamma_0\), let \(\nu:=N(x_0)\), let \(\xi\in \R^3\) satisfy \(\xi\cdot \nu=0\), and let \(\eta\in \C_+\) with \(|\xi|+|\eta|\neq 0\). We consider the half-line problem
\begin{equation}
\label{eq:LS-lame-proof}
\eta v(y)
+
\mathcal A^\#\bigl(x_0,\xi+i\nu \partial_y\bigr)v(y)
=
0,
\quad y>0,
\end{equation}
together with the principal boundary condition
\begin{equation}
\label{eq:LS-lame-proof-bc}
\mathcal B^\#\bigl(x_0,\xi+i\nu \partial_y\bigr)v(0)=h.
\end{equation}
After a rigid rotation, we may assume \(\nu=e_3\) and \(\xi=(\xi_1,\xi_2,0)\). For the uniqueness part it suffices to consider the homogeneous boundary condition \(h=0\).
We introduce the Fourier-modified gradient
\[
\nabla_{\xi,y} v
:=
\begin{pmatrix}
i\xi_1 v\\
i\xi_2 v\\
\partial_y v
\end{pmatrix} \ \text{ and } \
\div_{\xi,y} v
:=
i\xi_1 v_1 + i\xi_2 v_2 + \partial_y v_3,
\]
and the corresponding symmetric gradient
\[
D_{\xi,y}(v)
:=
\frac12\Bigl(\nabla_{\xi,y}v + (\nabla_{\xi,y}v)^\top\Bigr).
\]
Then \eqref{eq:LS-lame-proof} may be rewritten as
\[
\eta v
-
\frac{1}{\varrho_0(x_0)}
\div_{\xi,y}\Bigl(
2\mu D_{\xi,y}(v)
+
\lambda \,\div_{\xi,y}(v)\,I_3
\Bigr)
=
0,
\quad y>0.
\]
Assume that \(v\in \rC_0(\R_+;\C^3)\) is a decaying solution of \eqref{eq:LS-lame-proof}--\eqref{eq:LS-lame-proof-bc} with \(h=0\). Multiplying by \(\overline{v}\), integrating over \((0,\infty)\), and using integration by parts together with the homogeneous traction condition at \(y=0\), we obtain
\[
\mathrm{Re}\, \eta \int_0^\infty |v(y)|^2\,\d y
+
\frac{1}{\varrho_0(x_0)}
\int_0^\infty
\Bigl(
2\mu |D_{\xi,y}(v)(y)|^2
+
\lambda |\div_{\xi,y}v(y)|^2
\Bigr)\,\d y
=
0.
\]
Since \(\mathrm{Re}\, \eta>0\), \(\mu>0\), and \(2\mu+3\lambda>0\), the quadratic form
\[
E\mapsto 2\mu |E|^2 + \lambda (\operatorname{tr}E)^2
\]
is positive definite on symmetric \(3\times 3\)-matrices. Hence the second integral is nonnegative, and therefore both terms vanish. It follows that
\[
v\equiv 0.
\]
Thus the homogeneous half-line problem admits only the trivial decaying solution.
Now, for fixed \((x_0,\xi,\eta)\), the space of decaying solutions of \eqref{eq:LS-lame-proof} has dimension \(3\), since we are dealing with a second-order \(3\times 3\)-system and the stable subspace has dimension equal to the number of boundary conditions. Therefore the boundary map
\[
v \longmapsto \mathcal B^\#\bigl(x_0,\xi+i\nu\partial_y\bigr)v(0)
\]
is a linear map from a \(3\)-dimensional space into \(\C^3\). Since its kernel is trivial by the uniqueness argument above, it is an isomorphism. Consequently, for every \(h\in \C^3\), there exists a unique decaying solution of \eqref{eq:LS-lame-proof}--\eqref{eq:LS-lame-proof-bc}. This proves the Lopatinskii--Shapiro condition.
\end{step}
\end{proof}
\noindent
We now turn to the linear stochastic problem \eqref{eq:U-system}. By the preceding analysis, there exists $\omega>0$ such that the realization $A_q+\omega$ admits a bounded $\Hinfty$-calculus on $\rX_0$ with $\Hinfty$-angle strictly less than $\pi/2$, see \cite[Theorem~2.3]{MR2047641}. Hence the stochastic maximal regularity result for operators with bounded $\Hinfty$-calculus applies to the shifted problem; see \cite{MR2952092,MR4459102}.
\noindent
Since \(A_q+\omega\) admits a bounded \(\Hinfty\)-calculus on \(\rX_0\) with \(\Hinfty\)-angle strictly less than \(\pi/2\), permanence properties of the \(\Hinfty\)-calculus imply that one may shift the operator along the fractional domain scale. More precisely, for \(\eps>0\), we set
\[
\rX_0^\eps:=\rD\bigl((A_q+\omega)^\eps\bigr) \ \text{ and } \
\rX_1^\eps:=\rD\bigl((A_q+\omega)^{1+\eps}\bigr),
\]
and denote by \(A_q^\eps\) the realization of \(\mathcal A(x,D)\) in the ground space \(\rX_0^\eps\), i.e.
\[
A_q^\eps u:=\mathcal A(x,D)u,
\quad
\rD(A_q^\eps)=\rX_1^\eps.
\]
Then \(A_q^\eps+\omega\) again admits a bounded \(\Hinfty\)-calculus on \(\rX_0^\eps\), with \(\Hinfty\)-angle strictly less than \(\pi/2\). Hence the stochastic maximal regularity result applies to the shifted problem
\[
\d V + (A_q^\eps+\omega)V\,\d t
=
e^{-\omega t}f_{\mathrm{sto}}\,\d B_t,
\quad
V(0)=0.
\]
Setting
\[
U(t):=e^{\omega t}V(t),
\]
It\^o's product rule yields
\[
\d U
=
\omega e^{\omega t}V\,\d t + e^{\omega t}\,\d V
=
-A_q^\eps U\,\d t + f_{\mathrm{sto}}\,\d B_t.
\]
Thus \(U\) solves the unshifted problem associated with the realization \(A_q^\eps\),
\[
\d U + A_q^\eps U\,\d t = f_{\mathrm{sto}}\,\d B_t,
\quad
U(0)=0.
\]
Since we work on the finite time interval \((0,T)\), the factors \(e^{\pm\omega t}\) are bounded on \([0,T]\). Therefore multiplication by \(e^{\pm\omega t}\) defines an isomorphism on the relevant time-dependent spaces, so that the estimates for \(V\) and \(U\) are equivalent up to constants depending on \(T\) and \(\omega\). Moreover, the domains of \(A_q^\eps\) and \(A_q^\eps+\omega\) coincide, and hence so do the associated interpolation spaces, up to equivalence of norms. For this reason, the shift by \(\omega\) is again inessential on finite time intervals. The introduction of the shifted realization \(A_q^\eps\) is crucial for the nonlinear analysis, since the stochastic convolution \(U\) enters the nonlinear terms, in particular the boundary term \(F_\Gamma\), and the regularity obtained from stochastic maximal regularity in the original ground space \(\rX_0=\rLq(\Omega_0;\R^3)\) is not sufficient for the corresponding estimates. The following result can be found in \cite{MR4459102}.
\begin{lem}\label{lem:stochastic-U-additive}
Let \(p\in(2,\infty)\), \(q\in(3,\infty)\), and let \(\eps>0\).
Assume that 
\[
f_{\mathrm{sto}}
\in
\rL_\mathscr{P}^0\bigl(\Omega;\rLp(0,T;\gamma(\mathcal{U},\rX_{1/2+\eps}))\bigr).
\]
Then the stochastic problem
\[
\left\{
\begin{aligned}
\d U + A_q^\eps U\,\d t &= f_{\mathrm{sto}}\,\d B_t,
&& \text{in } (0,T),
\\
U(0) &= 0,
\end{aligned}
\right.
\]
admits a unique strong adapted solution
\[
U(\omega)
\in \E_{1,T}^{\mathrm{sto}} :=
\bigcap_{\theta \in [0, \nicefrac{1}{2})}  \rH^{\theta,p}(0,T;\rX_{1+\eps-\theta})
\cap
\rC([0,T];(\rX_0^\eps,\rX_1^\eps)_{1-\nicefrac1p,p}),
\]
and there exists a constant \(C>0\), independent of $T$ and $\omega$, such that
\[
\|U\|_{\rL^0_\mathscr{P}(\Omega;\E_{1,T}^{\mathrm{sto}})} \leq 
C\,
\|f_{\mathrm{sto}}\|_{\rL^0_\mathscr{P}(\Omega; \rLp( 0,T;\gamma(\mathcal{U},\rX_{1/2+\eps})))}.
\]
\end{lem}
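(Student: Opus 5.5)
The plan is to deduce the statement from the abstract stochastic maximal $\rL^p$-regularity theory for operators with a bounded $\Hinfty$-calculus of angle $<\pi/2$ (\cite{MR2952092,MR4459102}), applied in the shifted ground space $\rX_0^\eps=\rD((A_q+\omega)^\eps)$. First we identify the data space. Since $\rX_0$ is an $\rL^q$-space (UMD, type $2$ since $q\ge2$) and $(A_q+\omega)^\eps$ is an isomorphism $\rX_0^\eps\to\rX_0$, the space $\rX_0^\eps$ inherits these properties. The bounded imaginary powers of $A_q+\omega$ give $\rD((A_q+\omega)^{s})=\rX_s=[\rX_0,\rX_1]_s$ for $s\in(0,1)$. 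Hence the complex interpolation scale of $A_q^\eps$ on $\rX_0^\eps$ is $[\rX_0^\eps,\rX_1^\eps]_{s}=\rX_{s+\eps}$, with the obvious reading $\rD((A_q+\omega)^{s+\eps})$ for $s+\eps\ge1$. In particular, the critical noise space for stochastic maximal regularity, namely the domain of $(A_q^\eps+\omega)^{1/2}$ in $\rX_0^\eps$, is exactly $\rX_{1/2+\eps}$. This shows that the hypothesis $f_{\mathrm{sto}}\in\rL^p(0,T;\gamma(\mathcal U,\rX_{1/2+\eps}))$ is precisely the natural data class.

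Next we apply \cite{MR4459102} to $\d V+(A_q^\eps+\omega)V\,\d t=e^{-\omega t}f_{\mathrm{sto}}\,\d B_t$, $V(0)=0$. We first do this for $f_{\mathrm{sto}}\in \rL^p_{\mathscr P}(\Omega\times(0,T);\gamma(\mathcal U,\rX_{1/2+\eps}))$. The result yields a unique adapted strong solution, namely the stochastic convolution $V(t)=\int_0^t e^{-(t-s)(A_q^\eps+\omega)}e^{-\omega s}f_{\mathrm{sto}}(s)\,\d B_s$. It satisfies the mixed regularity $V\in \rH^{\theta,p}(0,T;\rD((A_q^\eps+\omega)^{1-\theta}))$ for all $\theta\in[0,1/2)$, together with continuity in the trace space $(\rX_0^\eps,\rX_1^\eps)_{1-1/p,p}$ (this uses $p>2$). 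The $p$-th moment estimates have constants independent of $T$, thanks to the exponential stability of the shifted operator. Identifying $\rD((A_q^\eps+\omega)^{1-\theta})=\rX_{1+\eps-\theta}$ as above gives $V\in\E^{\mathrm{sto}}_{1,T}$. The passage $U=e^{\omega t}V$ is carried out by the Itô product rule exactly as in the preceding discussion. Multiplication by $e^{\pm\omega t}$ is bounded on each $\rH^{\theta,p}(0,T;\cdot)$ and on $\rC([0,T];\cdot)$. Uniqueness of $U$ follows from uniqueness for $V$, or directly because the difference of two solutions solves the deterministic homogeneous problem with zero initial value.

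Finally, we extend from $\rL^p(\Omega)$-integrable data to $\rL^0$ data by the standard localization argument. We set $\tau_n:=\inf\{t:\|f_{\mathrm{sto}}\|_{\rL^p(0,t;\gamma)}\ge n\}\wedge T$, solve with $\mathbf 1_{[0,\tau_n]}f_{\mathrm{sto}}$, and use uniqueness to see that these solutions are consistent on $[0,\tau_n]$. Since $\tau_n\uparrow T$ a.s., we can glue them together. The $\rL^0$ estimate is then read off in the usual sense of \cite{MR4459102}, that is, as an estimate in probability/on the localized sets. The main obstacle is the first step: one must check carefully that the fractional domain scale of the shifted operator $A_q^\eps$ on $\rX_0^\eps$ coincides with $\rX_{\cdot+\eps}$. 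This requires bounded imaginary powers (available from the $\Hinfty$-calculus of \cite[Theorem~2.3]{MR2047641}) and care about boundary conditions when $s+\eps>1/2+1/(2q)$. Once that identification is made, the remaining steps are direct citations of the abstract theory.
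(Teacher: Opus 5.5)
Your route is the paper's route. The paper gives no argument beyond invoking stochastic maximal regularity from \cite{MR2952092,MR4459102} for the shifted operator on $\rX_0^\eps$, after the $\Hinfty$-calculus and $e^{\omega t}$ discussion preceding the lemma. Your identification $\rD((A_q^\eps+\omega)^{s})=\rX_{s+\eps}$, the transfer $U=e^{\omega t}V$ and the stopping-time extension to $\rL^0$ data are a sound fleshing-out of that citation.

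The gap is your claim that the constants are independent of $T$: it does not survive the back-transfer. Exponential stability makes the bounds for $V$ uniform in $T$. However, $U=e^{\omega t}V$ costs a factor of size $e^{\omega T}$, and $\|e^{\omega\cdot}\|_{\rC^1([0,T])}$ on the $\rH^{\theta,p}$-norms. So you only get $C=C(T)$, uniform for $T\le T_0$; the remark preceding the lemma in the paper already concedes this. It cannot be improved. Under the traction condition every rigid motion $r(x)=a+b\times x$ satisfies $\mathbb S(\nabla r)=0$, so $r\in\ker A_q$ and $(A_q+\omega)^s r=\omega^s r$ for all $s$. Take a unit vector $e\in\mathcal U$ and $f_{\mathrm{sto}}(t)h:=\mathbf 1_{[0,1]}(t)\langle h,e\rangle_{\mathcal U}\,r$. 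Then $U(t)=\beta_{t\wedge 1}\,r$ with $\beta_t:=B_te$ a real Brownian motion, hence $\|U\|_{\rL^p(0,T;\rX_{1+\eps})}\ge (T-1)^{1/p}|\beta_1|\,\|r\|_{\rX_{1+\eps}}\to\infty$, while the right-hand side stays fixed. The same example shows that no single constant serves the whole intersection over $\theta\in[0,\frac12)$. Indeed $C_\theta\to\infty$ as $\theta\uparrow\frac12$, since Brownian paths do not lie in $\rH^{1/2,p}(0,1)$.

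The second point you pass over is that $A_q$ is a random operator. It depends on the sample point through the $\mathcal F_0$-measurable density $\varrho_0$, and so do the norms of $\rX_0^\eps$, $\rX_1^\eps$ and of the trace space. The abstract theorem you apply is for one fixed operator, so applying it to $\d V+(A_q^\eps+\omega)V\,\d t=e^{-\omega t}f_{\mathrm{sto}}\,\d B_t$ needs an extra step. For example:
\begin{enumerate}
\item partition $\Omega$ into $\mathcal F_0$-sets $\Gamma_j$ on which $\|\varrho_0-\rho_j\|_{\rH^{1,q}}$ is small, for deterministic $\rho_j$;
\item on $\Gamma_j$, $A_q=\frac{\rho_j}{\varrho_0}A(\rho_j)$ is a small perturbation of the deterministic operator $A(\rho_j)$ in $\mathcal L(\rX_1^\eps,\rX_0^\eps)$, at least for small $\eps$, so perturbation stability of stochastic maximal regularity applies;
\item glue the pieces via $\mathbf 1_{\Gamma_j}\int_0^t g\,\d B=\int_0^t \mathbf 1_{\Gamma_j}g\,\d B$; alternatively, argue conditionally on $\mathcal F_0$.
\end{enumerate}
The resulting constants depend on $\varrho_\ast$, $\varrho^\ast$ and a bound for $\|\varrho_0\|_{\rH^{1,q}}$, which controls the modulus of continuity of the coefficients. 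So independence of the sample point holds only on sets like $\Omega_N$, which is how the localization step in the proof of \autoref{thm:local-eulerian} uses the lemma.

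What your argument actually proves is the following. For $T\le T_0$ and each fixed $\theta\in[0,\frac12)$, the estimate holds in the localized (or in-probability) sense. Its constant depends on $T_0$, $\theta$ and a bound for $\|\varrho_0\|_{\rH^{1,q}}$. You should state it in that form. Neither defect is addressed by the paper's citation either.
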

\noindent
The preceding two lemmas allow us to eliminate the additive noise by a standard Da Prato--Debussche type decomposition. More precisely, let \(U\) denote the unique strong adapted solution of \eqref{eq:U-system} guaranteed by \autoref{lem:stochastic-U-additive}
and define
\[
\bar u := v+U.
\]
Then \(\bar u\) solves
\begin{equation}
\left\{
\begin{aligned}
\d \bar u - \varrho_0^{-1}\operatorname{div}\mathbb S(\nabla \bar u)\,\d t
&= f_u\,\d t + f_{\mathrm{sto}}\,\d B_t,
&& \text{in } \Omega_0\times(0,T),
\\
\mathbb S(\nabla \bar u)N &= g,
&& \text{on } \Gamma_0\times(0,T),
\\
\bar u(0)&=u_0,
&& \text{in } \Omega_0,
\end{aligned}
\right.
\label{eq:combined-u-additive}
\end{equation}
if and only if \(v\) solves the pathwise deterministic Lam\'e problem \eqref{eq:v-system}.
Consequently, the additive stochastic forcing is completely encoded in the fixed stochastic convolution \(U\), whereas the remaining unknown \(v\) satisfies a deterministic parabolic boundary value problem with random coefficients only through the dependence on \(U\).

\smallskip

\noindent
For the nonlinear problem, the stochastic convolution \(U\) is first constructed once and for all, and is then regarded as fixed. 
Using the notation introduced in \eqref{eq:transformed-FB-CNSE-lame-divrho-F2} and \eqref{eq:transformed-FB-CNSE-lame-divrho-FGamma}, the stochastic evolution problem is recast as a pathwise deterministic system with random coefficients,
\begin{equation}
\left\{
\begin{aligned}
\partial_t v + \mathcal A(x,D)v
&=  F_u(v+U),
\\
\mathcal B(x,D)v
&=  F_\Gamma(v+U),
\\
v(0)&=u_0.
\end{aligned}
\right.
\label{eq:recast-system-v}
\end{equation}
Here \(J\) and \(\mathrm Z\) are the Lagrangian quantities generated by the velocity field \(v+U\). In this formulation, the additive noise no longer appears explicitly in the evolution equations; it enters only through the fixed random field \(U\).

\section{Nonlinear Estimates}
\label{sec: estimates}
\noindent
This section is devoted to the nonlinear estimates for the terms $F_u$, and $F_\Gamma$ defined in \eqref{eq:transformed-FB-CNSE-lame-divrho-F2} and \eqref{eq:transformed-FB-CNSE-lame-divrho-FGamma}. 
We first record a paraproduct bound that will be used to control products in the nonlinear boundary condition $F_\Gamma$ in Triebel--Lizorkin norms. This type of estimate is a key ingredient in the proof of \autoref{thm:local-eulerian} and goes back essentially to Chae~\cite{Chae:02}. For related bilinear estimates in homogeneous Triebel--Lizorkin spaces in the context of the Navier--Stokes equations, we refer to \cite{KS:04}.

\begin{lem}[Paraproduct estimates in $\rF^s_{pq}$]
\label{lem:weighted paraprod estimate} \mbox{} \\
Let $s > 0$, $p \in (1,\infty)$, $q \in (1,\infty]$ and $p_1, p_2, r_1, r_2 \in [1,\infty]$ satisfying $\frac{1}{p} = \frac{1}{p_1} + \frac{1}{p_2} = \frac{1}{r_1} + \frac{1}{r_2}$.
Then there exists a constant $C > 0$ such that it holds that
\begin{equation*}
    \| f g \|_{\rF_{pq}^s(0,T)} \le C\bigl(\| f \|_{\rL^{p_1}(0,T)} \| g \|_{\rF_{p_2 q}^s(0,T)} + \| g \|_{\rL^{r_1}(0,T)} \| f \|_{\rF_{r_2 q}^s(0,T)}\bigr).
\end{equation*}
\end{lem}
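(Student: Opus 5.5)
The plan is the standard Littlewood--Paley paraproduct route on $\R$, followed by restriction to $(0,T)$. I first reduce to the whole line. Since $\rF^s_{pq}(0,T)$ is defined by restriction, I fix extensions and use a universal (Rychkov-type) extension operator $\mathcal E$. This operator is bounded simultaneously on $\rF^s_{pq}(0,T)\to\rF^s_{pq}(\R)$ for all admissible $(s,p,q)$, and on $\rL^r(0,T)\to\rL^r(\R)$ for all $r\in[1,\infty]$. Applying it to $f$ and $g$ and restricting $(\mathcal Ef)(\mathcal Eg)$ back to $(0,T)$ reduces the claim to the estimate on $\R$, with a constant depending only on $s,p,q,p_i,r_i$. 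To avoid a $T$-dependent constant, I would first rescale to $(0,1)$, or simply accept a constant depending on $T$; the latter suffices for the later use with $T$ bounded.

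On $\R$, I use Bony's decomposition
\[
fg=\sum_j S_{j-2}f\,\Delta_j g+\sum_j S_{j-2}g\,\Delta_j f+\sum_{|j-k|\le 1}\Delta_j f\,\Delta_k g=:T_fg+T_gf+R(f,g).
\]

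\textbf{The two paraproducts.} For $T_fg$, the summands have Fourier support in annuli $\sim 2^j$. Hence $\|T_fg\|_{\rF^s_{pq}}\lesssim \|(\sum_j 2^{jsq}|S_{j-2}f\,\Delta_j g|^q)^{1/q}\|_{\rL^p}$. I bound this pointwise by $\sup_j|S_{j-2}f|\cdot(\sum_j 2^{jsq}|\Delta_jg|^q)^{1/q}\le Mf\cdot(\dots)$, where $M$ is the Hardy--Littlewood maximal function. Hölder with $\frac1p=\frac1{p_1}+\frac1{p_2}$ then gives the bound $\|f\|_{\rL^{p_1}}\|g\|_{\rF^s_{p_2q}}$. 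Here I use boundedness of $M$ on $\rL^{p_1}$ for $p_1>1$; for $p_1=\infty$ it is trivial. The term $T_gf$ is handled symmetrically and gives $\|g\|_{\rL^{r_1}}\|f\|_{\rF^s_{r_2q}}$. The endpoint $p_1=1$ forces $p_2=\infty$ and $p<1$, which is excluded, so no problem arises there.

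\textbf{The remainder.} Only here is $s>0$ needed. The blocks $\Delta_jf\,\Delta_kg$ have Fourier support in balls of radius $\sim 2^j$, not annuli. I would use the ball-support lemma for $\rF$-spaces (Triebel / Yamazaki), valid for $s>\max(0,\frac1p-1)=0$ since $p>1$:
\[
\|R(f,g)\|_{\rF^s_{pq}}\lesssim \Bigl\|\Bigl(\sum_j 2^{jsq}|\Delta_jf|^q|\tilde\Delta_jg|^q\Bigr)^{1/q}\Bigr\|_{\rL^p},
\]
with $\tilde\Delta_j=\Delta_{j-1}+\Delta_j+\Delta_{j+1}$. Bounding $|\tilde\Delta_j g|\lesssim Mg$ and applying Hölder gives the second term. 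I expect this to be the main obstacle, because of the ball-support lemma. Its proof needs the Fefferman--Stein vector-valued maximal inequality, which requires $p>1$ and $q>1$; the case $q=\infty$ needs a separate Peetre maximal-function argument. It also uses $s>0$ to sum the geometric tails $\sum_{k\ge j}2^{(j-k)s}$.

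Finally, I restrict to $(0,T)$ and combine the three bounds. I also note that the estimate is used vector-valued, with $\rH^{1,q}(\Omega_0)$-valued functions. For that, I would either invoke the same argument for Banach-valued Triebel--Lizorkin spaces over a UMD Banach function space (the maximal inequalities hold on such lattices) or apply the scalar estimate pointwise in $x$ before taking spatial norms.
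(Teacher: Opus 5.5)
Your proposal is correct and is essentially the standard argument: Bony's decomposition, maximal-function bounds with H\"older for the two paraproducts, and Fefferman--Stein plus the $s>0$ geometric summation for the remainder. This is the proof in Chae's paper, which the authors cite in place of giving one. Two small fixes: the $q=\infty$ case of Fefferman--Stein is immediate from $\sup_j Mf_j\le M(\sup_j|f_j|)$, so no separate Peetre argument is needed; and since the lemma is applied with $p_1=r_1=\infty$, you should extend from $(0,T)$ by a Seeley-type reflection operator, which is bounded on both $\rL^\infty$ and the relevant $\rF^s_{pq}$, rather than assume Rychkov's operator is bounded on $\rL^\infty$.
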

\noindent
We are now in a position to state the key estimates for the nonlinearities, which form the backbone of the fixed point argument.
\begin{lem}[Pathwise estimates for the nonlinearities]
\label{lem:pathwise-estimates-Fu-FGamma} \mbox{} \\
Let \(T>0\), \(p\in(2,\infty)\), \(q\in(3,\infty)\) satisfy
\(
\frac{2}{p}+\frac{3}{q}<1
\), \(\eps>0\), \(\varrho_0(\omega) \in \rH^{1,q}(\Omega_0)\) and $f_{\mathrm{sto}}(\omega)
\in
\rLp(0,T;\gamma(\mathcal{U},\rX_{1/2+\eps}))$. Fix \(R>0\) and \(\delta\in(0,\delta_0]\). Assume that
\[
v(\omega)\in \mathbb E_{1,T} \ \text{ with }\ \|v(\omega) \|_{\E_{1,T}} \leq R
\]
and set $ \bar u(\omega):=v(\omega)+U(\omega)$,
where \(U\) denotes the stochastic convolution constructed in \autoref{lem:stochastic-U-additive}.
Let \(\rX(\omega)\), \(\mathrm Z(\omega)\), and \(J(\omega)\) be the associated stochastic Lagrangian quantities, and define
\[
\bar\varrho(\omega):=\frac{\varrho_0(\omega)}{J(\omega)}.
\]
Moreover, for $\theta \in (0, \nicefrac{1}{2})$ assume that
\[
\theta=\frac12-\frac1{2q}.
\]
Denote by 
\[ 
\tau_{\delta,R, \theta}>0
\]
the $(\mathcal{F}_t)_t$-stopping time defined in 
\autoref{prop:pathwise-stochastic-lagrangian}.
Then, on \([0,\tau_{\delta,R,\theta}(\omega)]\), there exists a constant \(C(\omega)>0\), depending only on \(p\), \(q\), \(\gamma\), \(\eps\), \(\Omega_0\), \(\mu\), \(\lambda\), \(\varrho_\ast\), \(p_{\mathrm{ext}}\), \(\delta_0\), \(M_{\varrho_0}(\omega)\), and \(M_{\varrho_0}^{-1}(\omega)\), such that
\begin{equation*}
\begin{aligned}
\|F_u(v(\omega)+U(\omega))\|_{\mathbb E_{0,\tau_{\delta,R,\theta}}}
&\le
C(\omega)\bigl(
\delta\,\bigl(R+M_{\mathrm{sto}}(\omega)\bigr)
+
\tau_{\delta,R,\theta}(\omega)^{1/p}
\bigr),
\\
\|F_\Gamma(v(\omega)+U(\omega))\|_{\mathbb G_{\tau_{\delta,R,\theta}}}
&\le
C(\omega)\bigl(
\delta\,\bigl(1+R+M_{\mathrm{sto}}(\omega)\bigr)
+
\tau_{\delta,R,\theta}(\omega)^{1/p}
\bigr),
\end{aligned}
\end{equation*}
where
\[
M_{\varrho_0}(\omega)=\|\varrho_0(\omega)\|_{\rH^{1,q}(\Omega_0)},
\
M_{\varrho_0}^{-1}(\omega)=\Bigl\|\frac1{\varrho_0(\omega)}\Bigr\|_{\rH^{1,q}(\Omega_0)} \ \text{ and } \ 
M_{\mathrm{sto}}(\omega)=
\|U(\omega)\|_{\E_{1,\tau}^{\mathrm{sto}}}.
\]
\end{lem}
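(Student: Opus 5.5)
We estimate $F_u$ and $F_\Gamma$ term by term on $[0,\tau]$ with $\tau:=\tau_{\delta,R,\theta}(\omega)$. The two inputs are \autoref{prop:pathwise-stochastic-lagrangian}(ii),(iv), which on $[0,\tau]$ give
\[
\|\mathrm Z-I_3\|_{\rL^\infty(\rH^{1,q})}+\|J-1\|_{\rL^\infty(\rH^{1,q})}\le C_0\delta,\qquad \|\mathrm Z-I_3\|_{\rH^{\theta,p}(\rH^{1,q})}+\|J-1\|_{\rH^{\theta,p}(\rH^{1,q})}\le\delta,
\]
and the bound $\|\bar u\|\le R+M_{\mathrm{sto}}$ in the relevant norms. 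That bound follows from $\|v\|_{\mathbb E_{1,\tau}}\le R$ and $\|U\|_{\mathbb E^{\mathrm{sto}}_{1,\tau}}=M_{\mathrm{sto}}$, using $\rX_{1+\eps-\theta}\hookrightarrow \rH^{2-2\theta,q}$ for the $U$-part. Since $q>3$, $\rH^{1,q}(\Omega_0)$ is a multiplication algebra embedded in $\rC(\overline\Omega_0)$, so $\varrho_0^{-1}$, $J$, $\mathrm Z$ act as bounded multipliers with constants depending on $M_{\varrho_0}$, $M_{\varrho_0}^{-1}$ and $C_0$.

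\emph{Bulk term $F_u$.} Every term of \eqref{eq:transformed-FB-CNSE-lame-divrho-F2} except the pressure term is linear in $\nabla^2\bar u$ or $\nabla\bar u\,\nabla\mathrm Z$, multiplied by a small factor $(J-1)$ or $(\mathrm Z-I_3)$ or by $\nabla\mathrm Z$.
\begin{itemize}
\item For the second-order terms we use $\|(J-1)\nabla^2\bar u\|_{\rL^q}\le C\|J-1\|_{\rL^\infty}\|\nabla^2\bar u\|_{\rL^q}$. Taking $\rL^p$ in time gives $C\delta(R+M_{\mathrm{sto}})$.
\item For the terms $\mathrm Z\,\nabla\bar u\,\nabla\mathrm Z$ we use $\nabla\mathrm Z=\nabla(\mathrm Z-I_3)\in \rL^\infty(\rL^q)$ with norm $\le C_0\delta$, and $\nabla\bar u\in \rL^p(\rL^\infty)$ by $\rH^{1,q}\hookrightarrow \rL^\infty$. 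This again gives $C\delta(R+M_{\mathrm{sto}})$.
\item For the pressure term we expand $\nabla p(\varrho_0/J)=p'(\varrho_0/J)\bigl(\nabla\varrho_0/J-\varrho_0\nabla J/J^2\bigr)$. Here $J\ge 1-C\delta\ge\frac12$ and $\nabla\varrho_0\in\rL^q$, so this term is bounded in $\rL^\infty(\rL^q)$ by $C(\omega)$. Taking the $\rL^p(0,\tau)$-norm produces the factor $\tau^{1/p}$.
\end{itemize}

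\emph{Boundary term $F_\Gamma$.} This is the main obstacle, because $\mathbb G_\tau$ carries the Triebel--Lizorkin time regularity $\rF^{\theta}_{pq}(0,\tau;\rL^q(\Gamma_0))$ with $\theta=\frac12-\frac1{2q}$. The point of the choice $\theta=\frac12-\frac1{2q}$ in $\sigma_{\delta,\theta}$ is that the coefficient differences are controlled in exactly this order of time regularity. Every term in \eqref{eq:transformed-FB-CNSE-lame-divrho-FGamma} except the last is a product $\nabla\bar u\cdot H$, where $H$ is a polynomial in $J$, $\mathrm Z$ vanishing at $(1,I_3)$, so that $H=O(|J-1|+|\mathrm Z-I_3|)$. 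We estimate these on $\Omega_0$ and then apply the trace operator: $\rH^{1,q}(\Omega_0)\to\rB^{1-1/q}_{qq}(\Gamma_0)$ for the spatial part, and the time-space version $\rF^{\theta}_{pq}(\rH^{1,q})\cap \rL^p(\rH^{2,q})\to\mathbb G$ for the full norm, i.e.\ we bound $\|\nabla\bar u\, H\|$ in $\rH^{\theta,p}(0,\tau;\rL^q)\cap \rL^p(0,\tau;\rH^{1,q})$.
\begin{itemize}
\item The $\rL^p(\rH^{1,q})$-part follows from the algebra property: $\|H\|_{\rL^\infty(\rH^{1,q})}\|\nabla\bar u\|_{\rL^p(\rH^{1,q})}\le C\delta(R+M_{\mathrm{sto}})$.
\item For the fractional time part we apply \autoref{lem:weighted paraprod estimate} pointwise in space together with Hölder/Minkowski (the $\rL^q$-valued version) to get
\[
\|H\nabla\bar u\|_{\rF^\theta_{pq}}\lesssim \|H\|_{\rL^\infty(\rL^\infty)}\|\nabla\bar u\|_{\rF^\theta_{pq}(\rL^q)}+\|\nabla\bar u\|_{\rL^{p}(\rL^\infty)}\|H\|_{\rF^\theta_{\infty q}(\rL^q)}.
\]
The first product is $\le C\delta(R+M_{\mathrm{sto}})$, since $\nabla\bar u\in\rH^{\theta,p}(\rH^{1-2\theta,q})$: for $v$ this comes from mixed derivatives of $\mathbb E_{1,\tau}$, and for $U$ from $\mathbb E^{\mathrm{sto}}_{1,\tau}$.
\item The second product is the delicate one. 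The $\rL^\infty$-in-time Triebel norm of $H$ is not directly available; instead we choose exponents $r_1$, $r_2$ in the paraproduct lemma with $\nabla\bar u\in\rL^{r_1}(\rL^\infty)$ and $H\in \rH^{\theta,p}(\rH^{1,q})$, whose norm is $\le C\delta$ by the definition of $\sigma_{\delta,\theta}$. The gap is closed by the embedding $\mathbb E_{1,\tau}\hookrightarrow \rC([0,\tau];\rB^{2-2/p}_{qp})\hookrightarrow \rL^\infty(\rW^{1,\infty})$, valid since $\frac2p+\frac3q<1$, and its analogue for $U$ via the $\rC([0,T];(\rX_0^\eps,\rX^\eps_1)_{1-1/p,p})$ regularity. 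This is where that hypothesis and the $\eps$-shift are used.
\end{itemize}

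The last term $(p(\varrho_0/J)-p_{\mathrm{ext}})J\mathrm Z N$ is not small. We subtract its value at $t=0$, which by the compatibility condition equals $\mathbb S(\nabla u_0)N$; writing $g=F_\Gamma$ gives $g(0)=\mathcal B u_0$, as required by \autoref{asu:v-data}(iv). The remaining difference we bound by $\delta$ via the $\rH^{\theta,p}$-control of $J-1$ and $\mathrm Z-I_3$, and the constant-in-time part by $C\tau^{1/p}$ in $\rL^p$. If the constant part is not admissible in $\rF^\theta_{pq}$ directly, we treat it with a fixed extension of the initial trace; that only affects constants.

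Collecting all terms yields both inequalities, with $C(\omega)$ depending only on the listed quantities.
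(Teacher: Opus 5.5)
Your overall plan matches the paper's. You use the multiplier bounds for $J$ and $\mathrm Z$ from \autoref{prop:pathwise-stochastic-lagrangian}, and your term-by-term bounds for $F_u$ (including $\tau^{1/p}$ from the pressure) are the paper's. You also treat the $\rL^p(\rB^{1-1/q}_{qq}(\Gamma_0))$-part of $F_\Gamma$ via the algebra property and the spatial trace, and split the $\rF^\theta_{pq}$-part with the paraproduct lemma.

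The genuine gap is your bound for the trace of $\nabla\bar u$ in $\rF^{\theta}_{pq}(0,\tau;\rL^q(\Gamma_0))$, $\theta=\frac12-\frac1{2q}$. You justify it by $\nabla\bar u\in\rH^{\theta,p}(0,\tau;\rH^{1-2\theta,q}(\Omega_0))$. But $1-2\theta=\frac1q$ is exactly the critical exponent: the trace operator is not bounded from $\rH^{1/q,q}(\Omega_0)$ into $\rL^q(\Gamma_0)$. So mixed derivatives followed by a spatial trace give nothing on $\Gamma_0$.

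For the same reason, the time--space trace you invoke, from $\rH^{\theta,p}(\rL^q(\Omega_0))\cap\rL^p(\rH^{1,q}(\Omega_0))$ into $\mathbb G_\tau$, does not hold. Interpolating, at any spatial level above $\frac1q$ you only reach time regularity strictly below $\theta$. The correct anisotropic trace into $\mathbb G_\tau$ needs $\rH^{1/2,p}(\rL^q)\cap\rL^p(\rH^{1,q})$. The product $H\nabla\bar u$ does not have this, because $H$ is only controlled in $\rH^{\theta,p}$ with $\theta<\frac12$. So you cannot estimate in the bulk and take the trace at the end.

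What is needed, and what the paper does, is to multiply on $\Gamma_0$ and bound $\nabla\bar u|_{\Gamma_0}$ separately for the two pieces of $\bar u$.

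\emph{The $v$-part.} Use the sharp anisotropic trace theorem (Denk--Hieber--Pr\"uss), $\|\nabla v|_{\Gamma_0}\|_{\rF^{\theta}_{pq}(0,\tau;\rL^q(\Gamma_0))}\le C\|v\|_{\mathbb E_{1,\tau}}$. It exploits the full $\rH^{1,p}(\rL^q)\cap\rL^p(\rH^{2,q})$ regularity, and it is the reason $\mathbb G_\tau$ is built on $\rF_{pq}$ rather than on $\rH^{\theta,p}$.

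\emph{The $U$-part.} That theorem is unavailable here, since the time regularity of $U$ is below $\frac12$. Instead use the $2\eps$ surplus: $U\in\rH^{\theta,p}(0,\tau;\rX_{1+\eps-\theta})\hookrightarrow\rH^{\theta,p}(0,\tau;\rH^{1+1/q+2\eps,q}(\Omega_0))$. Then $\partial_jU$ has a bounded trace in $\rL^q(\Gamma_0)$, and $\rH^{\theta,p}=\rF^\theta_{p2}\hookrightarrow\rF^\theta_{pq}$. You discard this $\eps$ at the outset ($\rX_{1+\eps-\theta}\hookrightarrow\rH^{2-2\theta,q}$). You then credit the $\eps$-shift to the $\rL^\infty(\rW^{1,\infty})$ bound in your second paraproduct term, where it is not needed: $\frac2p+\frac3q<1$ already gives $\rB^{2-2/p}_{qp}\hookrightarrow\rW^{1,\infty}$. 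That second term itself, with $r_1=\infty$ and $\|H\|_{\rH^{\theta,p}(\rH^{1,q})}\le C\delta$, is a valid variant of the paper's pairing.

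\emph{The pressure term.} Subtracting its $t=0$ value via the compatibility condition is a sensible refinement. However, your fallback of a fixed extension for the time-constant part $(p(\varrho_0)-p_{\mathrm{ext}})N$ would give an $O(1)$ contribution, not the $O(\tau^{1/p})$ the statement requires. To get the stated bound, argue as the paper does: a time-constant function has vanishing Slobodeckij seminorm on $(0,\tau)$, hence norm $O(\tau^{1/p})$.
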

\begin{proof}
Let \(\omega\in\Omega\) be fixed. To ease notation, we suppress the dependence on
\(\omega\) in what follows and write
\[
\tau:=\tau_{\delta,R,\theta}(\omega).
\]
From \autoref{prop:pathwise-stochastic-lagrangian} we recall
\begin{equation*}
\|\mathrm Z-I_3\|_{\rL^\infty(0,\tau;\rH^{1,q}(\Omega_0))}
+
\|J-1\|_{\rL^\infty(0,\tau;\rH^{1,q}(\Omega_0))}
\le C\,\delta,
\end{equation*}
and
\begin{equation*}
\|\mathrm Z-I_3\|_{\rH^{\theta,p}(0,\tau;\rH^{1,q}(\Omega_0))}
+
\|J-1\|_{\rH^{\theta,p}(0,\tau;\rH^{1,q}(\Omega_0))}
\le \delta.
\end{equation*}
In particular,
\begin{equation*}
\|\mathrm Z\|_{\rL^\infty(0,\tau;\rH^{1,q}(\Omega_0))}
+
\|J\|_{\rL^\infty(0,\tau;\rH^{1,q}(\Omega_0))}
\le C.
\end{equation*}
Moreover,
\begin{equation*}
\|J-1\|_{\rL^p(0,\tau;\rH^{1,q}(\Omega_0))}
\le
C\,\tau^{1/p}\delta,
\qquad
\|J\|_{\rL^p(0,\tau;\rH^{1,q}(\Omega_0))}
\le
C\,\tau^{1/p}.
\end{equation*}
Here and below we set
\[
M_{\varrho_0}:=\|\varrho_0\|_{\rH^{1,q}(\Omega_0)},
\qquad
M_{\varrho_0}^{-1}:=
\Bigl\|\frac1{\varrho_0}\Bigr\|_{\rH^{1,q}(\Omega_0)}.
\]
Since \(\rH^{1,q}(\Omega_0)\) is a Banach algebra, the bounds for \(J\) and
\(\mathrm Z\) imply
\begin{equation*}
\Bigl\|\frac{J-1}{\varrho_0}\Bigr\|_{\rL^\infty(0,\tau;\rH^{1,q}(\Omega_0))}
\le
C\,M_{\varrho_0}^{-1}\,\delta  \ \text{ and } \  \Bigl\|\frac{J}{\varrho_0}\Bigr\|_{\rL^\infty(0,\tau;\rH^{1,q}(\Omega_0))}
\le
C\,M_{\varrho_0}^{-1}.
\end{equation*}
Moreover, 
\begin{equation*}
    \Bigl\|\frac{J}{\varrho_0}\Bigr\|_{\rL^p(0,\tau;\rH^{1,q}(\Omega_0))}
\le
C\,\tau^{1/p}M_{\varrho_0}^{-1}.
\end{equation*}
Since \(\bar u=v+U\), \(\|v\|_{\mathbb E_{1,T}}\le R\), and
\(
\mathbb E_{1,T}\hookrightarrow \rL^p(0,T;\rH^{2,q}(\Omega_0))
\),
it follows from \autoref{lem:stochastic-U-additive} that
\begin{equation*}
\|\bar u\|_{\rL^p(0,\tau;\rH^{2,q}(\Omega_0))}
\le
\|v+U\|_{\rL^p(0,\tau;\rH^{2,q}(\Omega_0))}
\le
C\bigl(R+M_{\mathrm{sto}}\bigr),
\end{equation*}
where
\[
M_{\mathrm{sto}}
:=
\| U \|_{\E_{1,T}^{\mathrm{sto}}}.
\]
We now estimate \(F_u\). Recall that \(F_u\) is given by
\eqref{eq:transformed-FB-CNSE-lame-divrho-F2}. For the first term, we obtain
\begin{equation*}
\begin{aligned}
\Bigl\|
\frac{J-1}{\varrho_0}
\nabla^2\bar u
\Bigr\|_{\rL^p(0,\tau;\rL^q(\Omega_0))}
&\le
C
\Bigl\|\frac{J-1}{\varrho_0}\Bigr\|_{\rL^\infty(0,\tau;\rH^{1,q}(\Omega_0))}
\|\bar u\|_{\rL^p(0,\tau;\rH^{2,q}(\Omega_0))}
\\
&\le
C\,\delta\,M_{\varrho_0}^{-1}
\bigl(R+M_{\mathrm{sto}}\bigr).
\end{aligned}
\end{equation*}
For the terms involving second derivatives of \(\bar u\) and factors
\(\mathrm Z-I_3\), we use the boundedness of \(J/\varrho_0\), \(J\), and
\(\mathrm Z\) in \(\rL^\infty(0,\tau;\rH^{1,q}(\Omega_0))\). This gives
\begin{equation*}
\begin{aligned}
\Bigl\|
\frac{J}{\varrho_0}
(\mathrm Z-I_3)\nabla^2\bar u
\Bigr\|_{\rL^p(0,\tau;\rL^q(\Omega_0))}
&\le
C
\Bigl\|\frac{J}{\varrho_0}\Bigr\|_{\rL^\infty(0,\tau;\rH^{1,q})}
\|\mathrm Z-I_3\|_{\rL^\infty(0,\tau;\rH^{1,q})}
\|\bar u\|_{\rL^p(0,\tau;\rH^{2,q})}
\\
&\le
C\,\delta\,
\bigl(R+M_{\mathrm{sto}}\bigr).
\end{aligned}
\end{equation*}
The same estimate applies to all second-order terms in \(F_u\) which contain
one factor \(\mathrm Z-I_3\).
For the first-order terms containing \(\nabla\bar u\,\nabla\mathrm Z\), we use
\(\nabla \mathrm Z=\nabla(\mathrm Z-I_3)\). Hence
\begin{equation*}
\begin{aligned}
\Bigl\|
\frac{J}{\varrho_0}
\nabla \bar u\,\nabla \mathrm Z
\Bigr\|_{\rL^p(0,\tau;\rL^q(\Omega_0))}
&\le
C
\Bigl\|\frac{J}{\varrho_0}\Bigr\|_{\rL^\infty(0,\tau;\rH^{1,q})}
\|\nabla \bar u\|_{\rL^p(0,\tau;\rL^\infty)}
\|\nabla(\mathrm Z-I_3)\|_{\rL^\infty(0,\tau;\rL^q)}
\\
&\le
C
\|\bar u\|_{\rL^p(0,\tau;\rH^{2,q})}
\|\mathrm Z-I_3\|_{\rL^\infty(0,\tau;\rH^{1,q})}
\\
&\le
C\,\delta\,
\bigl(R+M_{\mathrm{sto}}\bigr).
\end{aligned}
\end{equation*}
Thus all non-pressure terms in \(F_u\) are bounded by
\[
C(\omega)\,\delta\,\bigl(R+M_{\mathrm{sto}}\bigr).
\]
It remains to estimate the pressure term. Since
\[
p\left(\frac{\varrho_0}{J}\right)
=
a\left(\frac{\varrho_0}{J}\right)^\gamma,
\]
the composition estimate in \(\rH^{1,q}(\Omega_0)\), the lower bound for \(J\),
and the \(\rL^\infty(0,\tau;\rH^{1,q})\)-bounds for \(J\) imply
\begin{equation*}
\left\|
p\left(\frac{\varrho_0}{J}\right)
\right\|_{\rL^\infty(0,\tau;\rH^{1,q}(\Omega_0))}
\le
C(\omega).
\end{equation*}
Consequently,
\begin{equation*}
\left\|
\nabla p\left(\frac{\varrho_0}{J}\right)
\right\|_{\rL^p(0,\tau;\rL^q(\Omega_0))}
\le
C(\omega)\,\tau^{1/p}.
\end{equation*}
Using also the \(\rL^\infty(0,\tau;\rH^{1,q})\)-bounds for
\(\frac{J}{\varrho_0}\) and \(\mathrm Z\), we obtain
\begin{equation*}
\begin{aligned}
\Bigl\|
\frac{J}{\varrho_0}\mathrm Z^\top
\nabla p\left(\frac{\varrho_0}{J}\right)
\Bigr\|_{\rL^p(0,\tau;\rL^q(\Omega_0))}
&\le
C
\Bigl\|\frac{J}{\varrho_0}\Bigr\|_{\rL^\infty(0,\tau;\rH^{1,q})}
\|\mathrm Z\|_{\rL^\infty(0,\tau;\rH^{1,q})}
\left\|
\nabla p\left(\frac{\varrho_0}{J}\right)
\right\|_{\rL^p(0,\tau;\rL^q)}
\\
&\le
C(\omega)\,\tau^{1/p}.
\end{aligned}
\end{equation*}
Collecting the above bounds, we arrive at
\begin{equation*}
\|F_u(v+U)\|_{\mathbb E_{0,\tau}}
\le
C(\omega)\bigl(
\delta\,\bigl(R+M_{\mathrm{sto}}\bigr)
+
\tau^{1/p}
\bigr).
\end{equation*}
Finally, to estimate \(F_\Gamma\), we collect some useful bounds for the
geometric boundary coefficients. Let
\(\widetilde N\in \rC^1(\overline{\Omega}_0;\R^3)\) be a fixed extension of
the outer unit normal \(N\) from \(\Gamma_0\) to \(\Omega_0\), which exists
since \(\Gamma_0\) is of class \(\rC^2\). Then
\[
\widetilde N|_{\Gamma_0}=N,
\qquad
\widetilde N-J\mathrm Z^\top \widetilde N=(I_3-J\mathrm Z^\top)\widetilde N,
\qquad
I_3-J\mathrm Z^\top=(1-J)I_3+J(I_3-\mathrm Z^\top).
\]
Since \(\rH^{1,q}(\Omega_0)\) is a Banach algebra for \(q>3\), it follows from
the bounds for \(J-1\) and \(\mathrm Z-I_3\) that
\begin{equation*}
\begin{aligned}
\|\widetilde N-J\mathrm Z^\top \widetilde N\|_{\rL^\infty(0,\tau;\rH^{1,q}(\Omega_0))}
&\le
C\bigl(
\|J-1\|_{\rL^\infty(0,\tau;\rH^{1,q}(\Omega_0))}
+
\|\mathrm Z-I_3\|_{\rL^\infty(0,\tau;\rH^{1,q}(\Omega_0))}
\bigr)
\\
&\le C\delta,
\end{aligned}
\end{equation*}
and
\begin{equation*}
\|J\mathrm Z^\top \widetilde N\|_{\rL^\infty(0,\tau;\rH^{1,q}(\Omega_0))}
\le C.
\end{equation*}
By the trace theorem and the embedding
\[
\rH^{1,q}(\Omega_0)\hookrightarrow \rC(\overline{\Omega}_0)\hookrightarrow \rL^\infty(\Gamma_0),
\]
we infer
\begin{equation*}
\|N-J\mathrm Z^\top N\|_{\rL^\infty(0,\tau;\rL^\infty(\Gamma_0))}
\le C\delta,
\qquad
\|J\mathrm Z^\top N\|_{\rL^\infty(0,\tau;\rL^q(\Gamma_0))}
\le C.
\end{equation*}
Next, recall that
\[
s=\frac12-\frac1{2q}
\]
and that \(\theta\ge s\). Using again that
\(\rH^{1,q}(\Omega_0)\) is a Banach algebra and that \(\widetilde N\) is
time-independent, we get
\begin{equation*}
\begin{aligned}
\|N-J\mathrm Z^\top N\|_{\rH^{s,p}(0,\tau;\rL^\infty(\Gamma_0))}
&\le
C\|\widetilde N-J\mathrm Z^\top \widetilde N\|_{\rH^{s,p}(0,\tau;\rH^{1,q}(\Omega_0))}
\\
&\le
C\bigl(
\|J-1\|_{\rH^{s,p}(0,\tau;\rH^{1,q}(\Omega_0))}
+
\|\mathrm Z-I_3\|_{\rH^{s,p}(0,\tau;\rH^{1,q}(\Omega_0))}
\bigr)
\\
&\le C\delta.
\end{aligned}
\end{equation*}
Moreover, writing
\[
J\mathrm Z^\top \widetilde N
=
\widetilde N + (J\mathrm Z^\top-I_3)\widetilde N
\ \text{ and }\
J\mathrm Z^\top-I_3=(J-1)\mathrm Z^\top+(\mathrm Z^\top-I_3),
\]
we obtain
\begin{equation*}
\begin{aligned}
\|J\mathrm Z^\top N\|_{\rF_{pq}^s(0,\tau;\rL^q(\Gamma_0))}
&\le
C\|\widetilde N\|_{\rH^{s,p}(0,\tau;\rH^{1,q}(\Omega_0))}
+
C\|(J\mathrm Z^\top-I_3)\widetilde N\|_{\rH^{s,p}(0,\tau;\rH^{1,q}(\Omega_0))}
\\
&\le
C\,\tau^{1/p}
+
C\bigl(
\|J-1\|_{\rH^{s,p}(0,\tau;\rH^{1,q}(\Omega_0))}
+
\|\mathrm Z-I_3\|_{\rH^{s,p}(0,\tau;\rH^{1,q}(\Omega_0))}
\bigr)
\\
&\le
C\bigl(\tau^{1/p}+\delta\bigr),
\end{aligned}
\end{equation*}
using the trace theorem and
\[
\rH^{s,p}(0,\tau;\rL^q(\Gamma_0))
=
\rF_{p2}^s(0,\tau;\rL^q(\Gamma_0))
\hookrightarrow
\rF_{pq}^s(0,\tau;\rL^q(\Gamma_0)),
\qquad q>2.
\]
These estimates will be used repeatedly in the bounds for the different terms in
\(F_\Gamma\).
Since
\[
\mathbb G_{\tau}
=
\rF_{pq}^{ \nicefrac12-\nicefrac{1}{2q}}(0,\tau;\rL^q(\Gamma_0;\R^3))
\cap
\rL^p(0,\tau;\rB_{qq}^{1-\nicefrac1q}(\Gamma_0;\R^3)),
\]
it suffices to estimate separately the two norms. We first consider the
\(\rL^p(0,\tau;\rB_{qq}^{1-\nicefrac1q}(\Gamma_0;\R^3))\)-part.
By the trace theorem and the bounds for the geometric coefficient
\(N-J\mathrm Z^\top N\) established above, we obtain
\begin{equation*}
\begin{aligned}
\Big\|
\frac{\partial \bar u_i}{\partial y_j}
\Bigl(
N_j-\sum_l J\,\mathrm Z_{l,j}N_l
\Bigr)
\Big\|_{\rL^p(0,\tau;\rB_{qq}^{1-\nicefrac1q}(\Gamma_0))}
&\le
C
\Big\|
\frac{\partial \bar u_i}{\partial y_j}
\Bigl(
\widetilde N_j-\sum_l J\,\mathrm Z_{l,j}\widetilde N_l
\Bigr)
\Big\|_{\rL^p(0,\tau;\rH^{1,q}(\Omega_0))}
\\
&\le
C
\|\widetilde N-J\mathrm Z^\top \widetilde N\|_{\rL^\infty(0,\tau;\rH^{1,q}(\Omega_0))}
\|\bar u\|_{\rL^p(0,\tau;\rH^{2,q}(\Omega_0))}
\\
&\le
C\,\delta\,\bigl(R+M_{\mathrm{sto}}\bigr).
\end{aligned}
\end{equation*}
All remaining terms in \(F_\Gamma\) which contain first derivatives of
\(\bar u\) multiplied by either \(N-J\mathrm Z^\top N\) or
\(\mathrm Z-I_3\) are treated in the same way and satisfy the same bound.
It remains to treat the pressure boundary term. By the trace theorem,
\[
\left\|
p\left(\frac{\varrho_0}{J}\right)J\mathrm Z^\top N
\right\|_{\rL^p(0,\tau;\rB_{qq}^{1-\nicefrac1q}(\Gamma_0))}
\le
C
\left\|
p\left(\frac{\varrho_0}{J}\right)J\mathrm Z^\top \widetilde N
\right\|_{\rL^p(0,\tau;\rH^{1,q}(\Omega_0))}.
\]
Using the Banach algebra property of \(\rH^{1,q}(\Omega_0)\), the bounds for
\(J\mathrm Z^\top\widetilde N\), the lower bound for \(J\), and the composition
estimate in \(\rH^{1,q}(\Omega_0)\), we get
\begin{equation*}
\begin{aligned}
\left\|
p\left(\frac{\varrho_0}{J}\right)J\mathrm Z^\top \widetilde N
\right\|_{\rL^p(0,\tau;\rH^{1,q}(\Omega_0))}
&\le
C
\left\|
p\left(\frac{\varrho_0}{J}\right)
\right\|_{\rL^p(0,\tau;\rH^{1,q}(\Omega_0))}
\\
&\le
C(\omega)\,\tau^{1/p}.
\end{aligned}
\end{equation*}
Similarly, since \(p_{\mathrm{ext}}\) is constant,
\begin{equation*}
\begin{aligned}
\|p_{\mathrm{ext}}\,J\mathrm Z^\top N\|_{\rL^p(0,\tau;\rB_{qq}^{1-\nicefrac1q}(\Gamma_0))}
&\le
C\,
\|J\mathrm Z^\top N\|_{\rL^p(0,\tau;\rB_{qq}^{1-\nicefrac1q}(\Gamma_0))}
\\
&\le
C\,\tau^{1/p}.
\end{aligned}
\end{equation*}
Collecting the above estimates, we conclude that
\begin{equation*}
\begin{aligned}
\|F_\Gamma(v+U)\|_{\rL^p(0,\tau;\rB_{qq}^{1-\nicefrac1q}(\Gamma_0;\R^3))}
&\le
C\,\delta\,\bigl(R+M_{\mathrm{sto}}\bigr)
+
C(\omega)\,\tau^{1/p}.
\end{aligned}
\end{equation*}
Next, we estimate the
\[
\rF_{pq}^{s}(0,\tau;\rL^q(\Gamma_0;\R^3))
\]
part of \(\mathbb G_{\tau}\). We first consider the terms in
\(F_\Gamma\) which contain first derivatives of \(\bar u\) multiplied by the
geometric defect \(N-J\mathrm Z^\top N\). By the paraproduct estimate
\autoref{lem:weighted paraprod estimate}, together with the boundary coefficient
estimates established above, we obtain
\begin{equation*}
\begin{aligned}
&\Big\|
\frac{\partial \bar u_i}{\partial y_j}
\Bigl(
N_j-\sum_l J\,\mathrm Z_{l,j}N_l
\Bigr)
\Big\|_{\rF_{pq}^{s}(0,\tau;\rL^q(\Gamma_0))}
\\
&\le
C
\Big\|
N_j-\sum_l J\,\mathrm Z_{l,j}N_l
\Big\|_{\rL^\infty(0,\tau;\rL^\infty(\Gamma_0))}
\Big\|
\frac{\partial \bar u_i}{\partial y_j}
\Big\|_{\rF_{pq}^{s}(0,\tau;\rL^q(\Gamma_0))}
\\
&\quad
+
C
\Big\|
N_j-\sum_l J\,\mathrm Z_{l,j}N_l
\Big\|_{\rF_{pq}^{s}(0,\tau;\rL^\infty(\Gamma_0))}
\Big\|
\frac{\partial \bar u_i}{\partial y_j}
\Big\|_{\rL^\infty(0,\tau;\rL^q(\Gamma_0))}.
\end{aligned}
\end{equation*}
By the estimates for \(N-J\mathrm Z^\top N\) proved above, this yields
\begin{equation*}
\begin{aligned}
&\Big\|
\frac{\partial \bar u_i}{\partial y_j}
\Bigl(
N_j-\sum_l J\,\mathrm Z_{l,j}N_l
\Bigr)
\Big\|_{\rF_{pq}^{s}(0,\tau;\rL^q(\Gamma_0))}
\\
&\le
C\,\delta
\Biggl(
\Big\|
\frac{\partial \bar u_i}{\partial y_j}
\Big\|_{\rF_{pq}^{s}(0,\tau;\rL^q(\Gamma_0))}
+
\Big\|
\frac{\partial \bar u_i}{\partial y_j}
\Big\|_{\rL^\infty(0,\tau;\rL^q(\Gamma_0))}
\Biggr).
\end{aligned}
\end{equation*}
To estimate the trace terms involving \(\nabla\bar u\), we write
\[
\bar u=v+U.
\]
Since
\[
v\in \mathbb E_{1,T}
=
\rH^{1,p}(0,T;\rL^q(\Omega_0;\R^3))
\cap
\rL^p(0,T;\rH^{2,q}(\Omega_0;\R^3)),
\]
the anisotropic trace theorem yields
\[
\|\nabla v\|_{\rF_{pq}^{s}(0,\tau;\rL^q(\Gamma_0;\R^3))}
\le
C\|v\|_{\mathbb E_{1,\tau}}
\le
C R.
\]
Concerning the stochastic convolution \(U\), we use that, by
\autoref{lem:stochastic-U-additive},
\[
U\in \rH^{s,p}(0,\tau;\rX_{1+\eps-s}),
\qquad s<\frac12,
\]
and
\[
\|U\|_{\rH^{s,p}(0,\tau;\rX_{1+\eps-s})}
\le
C\,
\|f_{\mathrm{sto}}\|_{\rL^p(0,\tau;\gamma(\mathcal U,\rX_{1/2+\eps}))}
=
C\,M_{\mathrm{sto}}.
\]
Moreover,
\[
\rH^{s,p}(0,\tau;\rX_{1+\eps-s})
=
\rF_{p2}^{s}(0,\tau;\rX_{1+\eps-s})
\hookrightarrow
\rF_{pq}^{s}(0,\tau;\rX_{1+\eps-s}),
\qquad q>2,
\]
and, by the embedding of the operator scale,
\[
\rX_{1+\eps-s}\hookrightarrow \rH^{2(1+\eps-s),q}(\Omega_0;\R^3)
=
\rH^{1+\nicefrac1q+2\eps,q}(\Omega_0;\R^3).
\]
Hence
\[
U\in
\rF_{pq}^{s}(0,\tau;\rH^{1+\nicefrac1q+2\eps,q}(\Omega_0;\R^3)),
\]
with
\[
\|U\|_{\rF_{pq}^{s}(0,\tau;\rH^{1+\nicefrac1q+2\eps,q}(\Omega_0;\R^3))}
\le
C\,M_{\mathrm{sto}}.
\]
Now, for each \(j\in\{1,2,3\}\), the trace operator
\[
\mathcal T_j :
\rH^{1+\nicefrac1q+2\eps,q}(\Omega_0;\R^3)\to \rL^q(\Gamma_0;\R^3),
\qquad
\mathcal T_j u := (\partial_j u)|_{\Gamma_0},
\]
is bounded. Therefore,
\[
\|\partial_j U\|_{\rF_{pq}^{s}(0,\tau;\rL^q(\Gamma_0;\R^3))}
\le
C\,M_{\mathrm{sto}}.
\]
Combining the previous estimates, we conclude that
\[
\|\partial_j \bar u\|_{\rF_{pq}^{s}(0,\tau;\rL^q(\Gamma_0;\R^3))}
\le
C\bigl(R+M_{\mathrm{sto}}\bigr).
\]
We next estimate the
\(
\rL^\infty(0,\tau;\rL^q(\Gamma_0;\R^3))
\)
norm. Since
\[
v\in \mathbb E_{1,T}
\hookrightarrow
\rC([0,\tau];\rB_{qp}^{2-\nicefrac2p}(\Omega_0;\R^3)),
\]
it follows that
\[
\nabla v\in
\rC([0,\tau];\rB_{qp}^{1-\nicefrac2p}(\Omega_0;\R^3)),
\]
and
\[
\|\nabla v\|_{\rL^\infty(0,\tau;\rB_{qp}^{1-\nicefrac2p}(\Omega_0;\R^3))}
\le
C\|v\|_{\mathbb E_{1,\tau}}
\le
CR.
\]
Likewise, by the pathwise continuity of \(U\) from
\autoref{lem:stochastic-U-additive}, after passing to spatial Besov spaces in
the operator scale, we also have
\[
\nabla U\in
\rC([0,\tau];\rB_{qp}^{1-\nicefrac2p}(\Omega_0;\R^3)),
\]
and
\[
\|\nabla U\|_{\rL^\infty(0,\tau;\rB_{qp}^{1-\nicefrac2p}(\Omega_0;\R^3))}
\le
C\,M_{\mathrm{sto}}.
\]
Therefore
\[
\nabla \bar u\in
\rC([0,\tau];\rB_{qp}^{1-\nicefrac2p}(\Omega_0;\R^3)),
\]
with
\[
\|\nabla \bar u\|_{\rL^\infty(0,\tau;\rB_{qp}^{1-\nicefrac2p}(\Omega_0;\R^3))}
\le
C\bigl(R+M_{\mathrm{sto}}\bigr).
\]
Applying the trace theorem gives
\[
\nabla \bar u|_{\Gamma_0}
\in
\rL^\infty(0,\tau;\rB_{qp}^{1-\nicefrac2p-\nicefrac1q}(\Gamma_0;\R^3)).
\]
Since
\[
1-\frac2p-\frac1q>0,
\]
which follows from
\[
\frac2p+\frac3q<1,
\]
the Besov embedding on \(\Gamma_0\) yields
\[
\rB_{qp}^{1-\nicefrac2p-\nicefrac1q}(\Gamma_0;\R^3)
\hookrightarrow
\rL^q(\Gamma_0;\R^3).
\]
Consequently,
\[
\|\partial_j \bar u\|_{\rL^\infty(0,\tau;\rL^q(\Gamma_0;\R^3))}
\le
C\bigl(R+M_{\mathrm{sto}}\bigr).
\]
Altogether, we arrive at
\begin{equation}
\label{eq:pathwise-trace-est}
\Big\|
\frac{\partial \bar u_i}{\partial y_j}
\Big\|_{\rF_{pq}^{s}(0,\tau;\rL^q(\Gamma_0))}
+
\Big\|
\frac{\partial \bar u_i}{\partial y_j}
\Big\|_{\rL^\infty(0,\tau;\rL^q(\Gamma_0))}
\le
C\bigl(R+M_{\mathrm{sto}}\bigr).
\end{equation}
All remaining terms in \(F_\Gamma\) which contain first derivatives of
\(\bar u\) multiplied by either \(N-J\mathrm Z^\top N\) or
\(\mathrm Z-I_3\) are estimated in the same way and satisfy the same bound.
It remains to treat the pressure contribution in the
\[
\rF_{pq}^{s}(0,\tau;\rL^q(\Gamma_0;\R^3))
\]
norm. We write
\[
p\left(\frac{\varrho_0}{J}\right)
=
p(\varrho_0)
+
\left[
p\left(\frac{\varrho_0}{J}\right)-p(\varrho_0)
\right].
\]
By the trace theorem, the composition estimate in \(\rH^{1,q}(\Omega_0)\), and
the embedding
\[
\rH^{\theta,p}(0,\tau;\rH^{1,q}(\Omega_0))
\hookrightarrow
\rH^{s,p}(0,\tau;\rH^{1,q}(\Omega_0)),
\]
we obtain
\begin{equation*}
\begin{aligned}
\left\|
p\left(\frac{\varrho_0}{J}\right)
\right\|_{\rF_{pq}^{s}(0,\tau;\rL^q(\Gamma_0))}
&\le
C
\left\|
p\left(\frac{\varrho_0}{J}\right)
\right\|_{\rH^{s,p}(0,\tau;\rH^{1,q}(\Omega_0))}
\\
&\le
C
\|p(\varrho_0)\|_{\rH^{s,p}(0,\tau;\rH^{1,q}(\Omega_0))}
\\
&\quad
+
C
\left\|
p\left(\frac{\varrho_0}{J}\right)-p(\varrho_0)
\right\|_{\rH^{s,p}(0,\tau;\rH^{1,q}(\Omega_0))}
\\
&\le
C(\omega)\,\tau^{1/p}
+
C(\omega)\,\delta.
\end{aligned}
\end{equation*}
Moreover,
\[
\left\|
p\left(\frac{\varrho_0}{J}\right)
\right\|_{\rL^\infty(0,\tau;\rL^q(\Gamma_0))}
\le
C(\omega).
\]
Hence, by the paraproduct estimate,
\begin{equation*}
\begin{aligned}
&\quad
\left\|
p\left(\frac{\varrho_0}{J}\right)
J\mathrm Z^\top N
\right\|_{\rF_{pq}^{s}(0,\tau;\rL^q(\Gamma_0))}
\\
&\le
C
\|J\mathrm Z^\top N\|_{\rL^\infty(0,\tau;\rL^q(\Gamma_0))}
\left\|
p\left(\frac{\varrho_0}{J}\right)
\right\|_{\rF_{pq}^{s}(0,\tau;\rL^q(\Gamma_0))}
\\
&\quad
+
C
\|J\mathrm Z^\top N\|_{\rF_{pq}^{s}(0,\tau;\rL^q(\Gamma_0))}
\left\|
p\left(\frac{\varrho_0}{J}\right)
\right\|_{\rL^\infty(0,\tau;\rL^q(\Gamma_0))}
\\
&\le
C(\omega)\bigl(\tau^{1/p}+\delta\bigr).
\end{aligned}
\end{equation*}
Similarly, since \(p_{\mathrm{ext}}\) is constant,
\[
\|p_{\mathrm{ext}}\,J\mathrm Z^\top N\|_{\rF_{pq}^{s}(0,\tau;\rL^q(\Gamma_0))}
\le
C\bigl(\tau^{1/p}+\delta\bigr).
\]
Collecting the above estimates, we conclude that
\begin{equation*}
\begin{aligned}
\|F_\Gamma(v+U)\|_{\rF_{pq}^{s}(0,\tau;\rL^q(\Gamma_0;\R^3))}
&\le
C\,\delta\,\bigl(R+M_{\mathrm{sto}}\bigr)
+
C(\omega)\bigl(\tau^{1/p}+\delta\bigr).
\end{aligned}
\end{equation*}
This completes the proof.
\end{proof}
\noindent
\noindent
We conclude this section by deriving pathwise difference estimates for the nonlinear terms \(F_u\) and \(F_\Gamma\) associated with two velocity fields. These estimates will be used later to establish the contraction property in the fixed-point argument for the transformed system.

\begin{cor}[Pathwise difference estimates for the nonlinearities]
\label{cor:pathwise-difference-nonlinearities}\mbox{} \\
Under the same assumptions and notation as in
\autoref{lem:pathwise-estimates-Fu-FGamma}, let
\[
v_1(\omega),\,v_2(\omega)\in \mathbb E_{1,T}
\ \text{ with }\
\|v_1(\omega)\|_{\mathbb E_{1,T}},\,
\|v_2(\omega)\|_{\mathbb E_{1,T}}\le R.
\]
Set
\[
\bar u_i(\omega):=v_i(\omega)+U(\omega),
\qquad i=1,2,
\]
where \(U\) denotes the stochastic convolution from
\autoref{lem:stochastic-U-additive}. Let
\(
\rX_i(\omega),\, \mathrm Z_i(\omega),\, J_i(\omega),
\ i=1,2,
\)
be the associated stochastic Lagrangian quantities from
\autoref{prop:pathwise-stochastic-lagrangian}, and define
\[
\bar\varrho_i(\omega):=\frac{\varrho_0(\omega)}{J_i(\omega)},
\qquad i=1,2.
\]
Assume in addition that
\(
\theta=\frac12-\frac1{2q},
\)
and let
\(
\alpha\in(\theta,\nicefrac12).
\)
Then, on \([0,\tau_{\delta,R,\theta}(\omega)]\), there exists a constant
\(C(\omega)>0\), depending only on \(p\), \(q\), \(\gamma\), \(\eps\),
\(\Omega_0\), \(\mu\), \(\lambda\), \(\varrho_\ast\), \(p_{\mathrm{ext}}\),
\(\delta_0\), \(M_{\varrho_0}(\omega)\), and
\(M_{\varrho_0}^{-1}(\omega)\), such that
\begin{equation*}
\begin{aligned}
&\quad
\|F_u(v_1(\omega)+U(\omega))
-
F_u(v_2(\omega)+U(\omega))\|_{\mathbb E_{0,\tau_{\delta,R,\theta}(\omega)}}
\\
&\le
C(\omega)\bigl(1+R+M_{\mathrm{sto}}(\omega)\bigr)
\bigl(\delta+\tau_{\delta,R,\theta}(\omega)^{1-\nicefrac1p}\bigr)
\|v_1-v_2\|_{\mathbb E_{1,\tau_{\delta,R,\theta}(\omega)}},
\\[0.5em]
&\quad
\|F_\Gamma(v_1(\omega)+U(\omega))
-
F_\Gamma(v_2(\omega)+U(\omega))\|_{\mathbb G_{\tau_{\delta,R,\theta}(\omega)}}
\\
&\le
C(\omega)\bigl(1+R+M_{\mathrm{sto}}(\omega)\bigr)
\bigl(
\delta
+
\tau_{\delta,R,\theta}(\omega)^{\alpha-s}
+
\tau_{\delta,R,\theta}(\omega)^{1-s}
\bigr)
\|v_1-v_2\|_{\mathbb E_{1,\tau_{\delta,R,\theta}(\omega)}}.
\end{aligned}
\end{equation*}
\end{cor}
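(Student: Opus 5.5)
The plan is to write each difference $F(\bar u_1)-F(\bar u_2)$, for $F\in\{F_u,F_\Gamma\}$, as a telescoping sum. Every term in \eqref{eq:transformed-FB-CNSE-lame-divrho-F2} and \eqref{eq:transformed-FB-CNSE-lame-divrho-FGamma} is a product of coefficient factors built from $J$, $\mathrm Z$, $\varrho_0^{-1}$ and $N$, times a derivative of $\bar u$. Replacing one factor at a time gives two kinds of pieces:
\begin{itemize}
\item[(a)] pieces where the velocity difference $\bar u_1-\bar u_2=v_1-v_2$ appears and the coefficients are evaluated at index $1$ or $2$;
\item[(b)] pieces where a coefficient difference $J_1-J_2$ or $\mathrm Z_1-\mathrm Z_2$ appears and multiplies $\nabla\bar u_i$ or $\nabla^2\bar u_i$, or a pressure difference appears.
\end{itemize}
Note that $U$ cancels in (a). This is why only $\|v_1-v_2\|_{\mathbb E_{1,\tau}}$ enters, while $M_{\mathrm{sto}}$ enters only through the factors $\bar u_i$ in (b). Both stochastic Lagrangian transforms are controlled on $[0,\tau]$ with $\tau=\tau_{\delta,R,\theta}$ by \autoref{prop:pathwise-stochastic-lagrangian}(ii), since $\tau_{\delta,R,\theta}$ depends only on $R$ and not on the particular $v_i$. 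Their differences are controlled by \autoref{cor:pathwise-difference-stochastic-lagrangian}.

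\emph{Estimate for $F_u$.} Each (a)-type piece is estimated exactly as in the proof of \autoref{lem:pathwise-estimates-Fu-FGamma}, with $\bar u$ replaced by $v_1-v_2$. The coefficients contain a factor $J_i-1$ or $\mathrm Z_i-I_3$ (or $\nabla\mathrm Z_i$), so these pieces contribute $C\delta\|v_1-v_2\|_{\mathbb E_{1,\tau}}$. For (b)-type pieces we use the Banach algebra property of $\rH^{1,q}$ and Hölder in time:
\[
\|(J_1-J_2)\nabla^2\bar u_2\|_{\rL^p(\rL^q)}\le C\|J_1-J_2\|_{\rL^\infty(\rH^{1,q})}(R+M_{\mathrm{sto}})\le C\tau^{1-1/p}(R+M_{\mathrm{sto}})\|v_1-v_2\|_{\mathbb E_{1,\tau}}.
\]
The pressure difference $\nabla[p(\varrho_0/J_1)-p(\varrho_0/J_2)]$ is handled through the mean value theorem in $\rH^{1,q}$. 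This uses the lower bound on $J_i$ and the fact that $p$ is smooth on the relevant positive range. It yields $C(\omega)\tau^{1/p}\|J_1-J_2\|_{\rL^\infty(\rH^{1,q})}$, which is again bounded by $\tau^{1-1/p}\|v_1-v_2\|$ up to constants. Summing these gives the first claimed bound.

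\emph{Estimate for $F_\Gamma$, $\rL^p(\rB^{1-1/q}_{qq})$ part.} This part is identical in spirit: we extend by $\widetilde N$, apply the trace theorem, and use the $\rL^\infty(\rH^{1,q})$ coefficient differences. These produce the term $\tau^{1-1/p}\le C\tau^{1-s}$ up to constants, since $s<1/p$ fails in general but $\tau\le T$. More precisely, I would keep the powers as they come and bound $\tau^{1-1/p}$ by $C\tau^{1-s}$ whenever $1-1/p\ge 1-s$. Otherwise the term is absorbed into $\tau^{\alpha-s}$, using $\alpha-s<1/2\le 1-1/p$.

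\emph{Estimate for $F_\Gamma$, $\rF^s_{pq}(\rL^q(\Gamma_0))$ part (main obstacle).} This is where I expect the real difficulty. I apply \autoref{lem:weighted paraprod estimate} to each product.
\begin{itemize}
\item For (a)-type pieces, the trace bounds \eqref{eq:pathwise-trace-est} hold with $\bar u$ replaced by $v_1-v_2$, using only the $\mathbb E_{1,\tau}$-norm, since $U$ has cancelled. The coefficient defects $N-J_i\mathrm Z_i^\top N$ are $O(\delta)$ in $\rL^\infty$ and in $\rF^s$, so these pieces give $C\delta\|v_1-v_2\|$. One subtlety is that the $\rL^\infty(0,\tau;\rB^{2-2/p}_{qp})$ embedding constant for $v_1-v_2$ must be uniform in $\tau$. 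I would secure this by using that $v_1-v_2$ vanishes at $t=0$, so the embedding holds with a $\tau$-independent constant.
\item For (b)-type pieces, the paraproduct puts $\nabla\bar u_i$ in $\rF^s$ and $\rL^\infty$, bounded by $R+M_{\mathrm{sto}}$ from \eqref{eq:pathwise-trace-est}. The coefficient difference is needed in $\rL^\infty(\rL^\infty(\Gamma_0))$, which gives $\tau^{1-1/p}$, and in $\rF^s_{pq}(\rL^\infty(\Gamma_0))$. For the latter I use $\rH^{\theta,p}\hookrightarrow\rH^{s,p}$ (here $\theta=s$) and the second estimate of \autoref{cor:pathwise-difference-stochastic-lagrangian}, which yields the factor $\tau^{\alpha-s}$. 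The pressure term is treated the same way, writing $p(\varrho_0/J_1)-p(\varrho_0/J_2)=(J_1-J_2)\,G(J_1,J_2)$ with $G$ smooth, and applying the paraproduct once more.
\end{itemize}
The $\tau^{1-s}$ term arises from the constant-in-time contributions, namely $\|\widetilde N\|_{\rH^{s,p}}$-type norms multiplied by $\rL^\infty$ differences.

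The delicate points are these. First, the $\rF^s$-norm of products of time-fractional regular objects must not lose the small factor; this is guaranteed by the paraproduct lemma. Second, all constants must be independent of $\tau$ on the interval $[0,\tau]$. I would handle the second point by extension operators for functions vanishing at $t=0$. Collecting all the pieces gives the stated bounds with prefactor $C(\omega)(1+R+M_{\mathrm{sto}})$.
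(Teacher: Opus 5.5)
Your proposal is correct. The paper gives no separate proof of this corollary, and the argument it has in mind is exactly your telescoping. The velocity-difference pieces are controlled by the $\delta$-smallness of $J_i-1$, $\mathrm Z_i-I_3$ and $N-J_i\mathrm Z_i^\top N$, as in \autoref{lem:pathwise-estimates-Fu-FGamma}. The coefficient-difference pieces are controlled by \autoref{cor:pathwise-difference-stochastic-lagrangian}. In the $F_\Gamma$ bound every $\tau^{1-1/p}$ can simply be absorbed into $\tau^{\alpha-s}$, since $\alpha-s<\tfrac12<1-\tfrac1p$; this is cleaner than your case distinction. Your use of $(v_1-v_2)(0)=0$, which is automatic in $\mathbb B_r(0,\tau)$, to get $\tau$-independent trace and embedding constants is a hypothesis the statement leaves implicit but relies on, so it is worth stating explicitly.
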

\section{Local Well-posedness}\label{sec: local}
\noindent
\begin{proof}[Proof of \autoref{thm:local-eulerian}]\noindent
The argument is based on a pathwise fixed-point construction combined with a localization procedure. We first fix \(\omega\in\Omega\) and solve the transformed problem on a random time interval by a contraction argument, treating all stochastic quantities as frozen coefficients. We then localize in \(\omega\) so that the random constants appearing in the nonlinear estimates are bounded by deterministic quantities. This is achieved by combining an \(\mathcal F_0\)-measurable localization for the initial data with a stopping-time localization for the time-dependent stochastic terms. Finally, we consider the associated stopped fixed point iteration and show that the iterates are progressively measurable. Since the contraction is pathwise, the iterates converge to the unique pathwise fixed point, which is therefore progressively measurable and defines the desired local strong solution.
\setcounter{stp}{0}
\begin{step}[Pathwise fixed-point]\label{step 1 proof}\mbox{}\\
Let \(\omega\in\Omega\) be fixed in the following. For notational convenience, we suppress the dependence on \(\omega\) in what follows; in particular, all objects below are understood pathwise for this fixed sample \(\omega\). Suppose that $\varrho_0 \in \rH^{1,q}(\Omega_0)$ and $u_0 \in \rX_\gamma$.
Fix $T>0$, $R>0$, $\delta \in ( 0, \delta_0]$, and  
\[
\theta = \frac12-\frac1{2q}.
\]
Define $\tau$ by   
\begin{equation*}
    \tau := \tau_{\delta, R, \theta} \wedge T,
\end{equation*}
where $\tau_{\delta, R_1, \theta}$ is given as in \autoref{prop:pathwise-stochastic-lagrangian}.
Then before fixing $\omega$, $\tau$ is a $(\mathcal{F}_t)_t$ adapted stopping time.
In order to center the iteration at zero and to incorporate the nonhomogeneous boundary condition at time \(t=0\), we introduce a reference solution \(v_{\mathrm{ref}}\) as the unique strong solution of
\begin{equation}
\left\{
\begin{aligned}
\partial_t v_{\mathrm{ref}} + \mathcal A(x,D)v_{\mathrm{ref}} &= 0,
&& \text{in } \Omega_0\times(0,\tau),
\\
\mathcal B(x,D)v_{\mathrm{ref}} &=
\bigl(p(\varrho_0)-p_{\mathrm{ext}}\bigr)N,
&& \text{on } \Gamma_0\times(0,\tau),
\\
v_{\mathrm{ref}}(0) &= u_0,
&& \text{in } \Omega_0.
\end{aligned}
\right.
\label{eq:vref-system}
\end{equation}
By \autoref{lem:deterministic-v}, together with the compatibility condition
\[
\mathcal B(x,D)u_0
=
\bigl(p(\varrho_0)-p_{\mathrm{ext}}\bigr)N
\ \text{ on }\Gamma_0,
\]
the system \eqref{eq:vref-system} admits a unique solution
\(
v_{\mathrm{ref}}\in \mathbb E_{1,\tau}
\text{ for every }T>0.
\)
As the norm of \(\mathbb E_{1,\tau}\) is given by time-integral norms, it follows that
\[
\|v_{\mathrm{ref}}\|_{\mathbb E_{1,\tau}}
\to 0
\ \text{ as } \ T\to  0.
\]

Next, define the solution ball $\mathbb B_r(0,\tau)$ by
\[
\mathbb B_r(0,\tau)
:=
\bigl\{
v \in \mathbb E_{1,\tau}:
v(0)=u_0,
\ \|v-v_{\mathrm{ref}}\|_{\mathbb E_{1,\tau}}\le r
\bigr\}.
\]
Then for $r>0$ and $T>0$ chosen sufficiently small and every $v\in \mathbb{B}_r(0,\tau)$, we obtain
\[
\|v\|_{\mathbb E_{1,\tau}}
\le
\|v-v_{\mathrm{ref}}\|_{\mathbb E_{1,\tau}}
+
\|v_{\mathrm{ref}}\|_{\mathbb E_{1,\tau}}
\le r+\|v_{\mathrm{ref}}\|_{\mathbb E_{1,\tau}}
\leq R.
\]
We now define the solution map
\[
\Psi:\mathbb B_r(0,\tau)\to  \mathbb E_{1,\tau},
\qquad
\Psi(v_1):=v,
\]
as follows. Let \(v_1\in \mathbb B_r(0,\tau)\) and set
\[
\bar u_1:=v_1+U,
\]
where \(U\) denotes the stochastic convolution solving \eqref{eq:U-system}. Let
\(
\rX_1,\, \mathrm Z_1=(\nabla\rX_1)^{-1},
\, J_1=\det\nabla\rX_1
\)
be the associated stochastic Lagrangian quantities generated by \(\bar u_1\). We recover the corresponding transformed density by
\[
\bar\varrho_1:=\frac{\varrho_0}{J_1}.
\]
Then \(v=\Psi(v_1)\) is defined as the unique solution of
\begin{equation}
\left\{
\begin{aligned}
\partial_t v+\mathcal A(x,D)v
&=
F_u(v_1+U),
&& \text{in } \Omega_0\times(0,\tau),
\\
\mathcal B(x,D)v
&=
F_\Gamma(v_1+U),
&& \text{on } \Gamma_0\times(0,\tau),
\\
v(0)&=u_0,
&& \text{in } \Omega_0.
\end{aligned}
\right.
\label{eq:fixed-point-v}
\end{equation}
Here \(F_u(v_1+U)\) and \(F_\Gamma(v_1+U)\) are evaluated with the Lagrangian quantities \(\mathrm Z_1\), \(J_1\), and the density \(\bar\varrho_1=\varrho_0/J_1\).
To see that the map \(\Psi\) is well defined, fix \(v_1\in\mathbb B_r(0,\tau)\). By construction,
\[
\|v_1\|_{\mathbb E_{1,\tau}}\le R.
\]
Hence, by \autoref{lem:pathwise-estimates-Fu-FGamma}, the corresponding nonlinear terms satisfy
\[
F_u(v_1+U)\in \mathbb E_{0,\tau}
\ \text{ and }\
F_\Gamma(v_1+U)\in \mathbb G_{\tau}.
\]
Thus \autoref{lem:deterministic-v} yields a unique solution \(v\in \mathbb E_{1,\tau}\), and therefore \(\Psi\) is well defined.
In the following we show that \(\Psi\) is a self-map. Set
\[
\Psi(v_1)=v.
\]
Then \(v-v_{\mathrm{ref}}\) has vanishing initial value. Hence, by \autoref{lem:deterministic-v},
\[
\|v-v_{\mathrm{ref}}\|_{\mathbb E_{1,\tau}}
\le
C(\omega)\bigl(
\|F_u(v_1+U)\|_{\mathbb E_{0,\tau}}
+
\|F_\Gamma(v_1+U)-\bigl(p(\varrho_0)-p_{\mathrm{ext}}\bigr)N\|_{\mathbb G_{\tau}}
\bigr),
\]
where \(C(\omega)>0\) is independent of \(\tau\).
Using \autoref{lem:pathwise-estimates-Fu-FGamma}, and the fact that the time-independent boundary datum
\[
\bigl(p(\varrho_0)-p_{\mathrm{ext}}\bigr)N
\]
contributes only a term of order \(\tau^{1/p}\) in the stopped boundary norm, we obtain
\begin{equation*}
\begin{aligned}
\|v-v_{\mathrm{ref}}\|_{\mathbb E_{1,\tau}}
\le
C(\omega)\bigl(
\delta\,\bigl(1+R_1+M_{\mathrm{sto}}(\omega)\bigr)
+
\tau^{1/p}
\bigr),
\end{aligned}
\end{equation*}
where
\[
M_{\mathrm{sto}}(\omega)
=
\|U(\omega)\|_{\E_{1,T}^{\mathrm{sto}}}.
\]
Therefore, by choosing \(\delta>0\) and \(T>0\) sufficiently small, we obtain
\[
\|v-v_{\mathrm{ref}}\|_{\mathbb E_{1,\tau}}\le r.
\]
Since moreover \(v(0)=u_0=v_{\mathrm{ref}}(0)\), it follows that
\[
\Psi(v_1)\in \mathbb B_r(0,\tau).
\]
Thus \(\Psi\) is a self-map.
By similar means, we show that $\Psi$ is a contraction, using the estimates for differences given in \autoref{cor:pathwise-difference-stochastic-lagrangian} and \autoref{cor:pathwise-difference-nonlinearities}.
\end{step}  
\begin{step}[Localization and Fixed Point Iteration]\label{step picard proof}\mbox{} \\ 
We now localize the pathwise construction in order to obtain deterministic bounds for the constants appearing in the fixed-point estimates.
For \(N\in\N\), define the \(\mathcal F_0\)-measurable sets
\[
\Omega_N
:=
\bigl\{ \omega \in \Omega \colon
\|\varrho_0(\omega)\|_{\rH^{1,q}(\Omega_0)}+\|\varrho_0(\omega)^{-1}\|_{\rH^{1,q}(\Omega_0)}+\|u_0(\omega)\|_{\rX_\gamma}\le N
\bigr\}.
\]
Then \((\Omega_N)_{N\in\N}\) is an increasing sequence of sets in \(\mathcal F_0\) with
\[
\Omega_N\subset \Omega_{N+1},
\ \text{ and }\ 
\bigcup_{N\in\N}\Omega_N=\Omega
\quad\text{a.s.}
\]
Next, we localize the time-dependent random quantities. Define \(\tau_N\) by
\[
\tau_N
:=
\inf\bigl\{
t\in[0,T]:
\|v_{\mathrm{ref}}\|_{\mathbb E_{1,t}}
+
\|U\|_{\E_{1,t}^{\mathrm{sto}} }
\ge N
\bigr\}\wedge T.
\]
Since the map
\[
t\longmapsto
\|v_{\mathrm{ref}}\|_{\mathbb E_{1,t}}
+
\|U\|_{\E_{1,t}^{\mathrm{sto}}} 
\]
is adapted, continuous, and nondecreasing, \(\tau_N\) is an \((\mathcal F_t)_t\)-stopping time. Now fix \(N\in\N\).
Then on \(\Omega_N \times [0,\tau_N]\) the constants \(C(\omega)\) in
\autoref{lem:pathwise-estimates-Fu-FGamma} and
\autoref{cor:pathwise-difference-nonlinearities}
are bounded by a deterministic constant \(C_N>0\).
We may therefore choose numbers
\[
\delta_N\in(0,\delta_0]
\ \text{ and }\
T_N\in(0,T],
\]
depending only on \(N\), such that the self-map and contraction estimates from \autoref{step 1 proof} hold with \(C(\omega)\) replaced by \(C_N\). Define
\[
\tilde{\tau}_N
:=
\tau_{\delta_N, R, \theta}
\wedge
\tau_N
\wedge
T_N.
\]
Then \(\tilde{\tau}_N\) is an \((\mathcal F_t)_t\)-stopping time, and on the localized set \(\Omega_N\) the fixed-point map
\[
\Psi:\mathbb B_r(0,\tilde{\tau}_N)\to \mathbb E_{1,\tilde{\tau}_N}
\]
is well defined and, by the same argument as in \autoref{step 1 proof}, is a strict contraction. Hence, for each \(N\in\N\), there exists a unique fixed point
\[
v_N(\omega)\in \mathbb B_r(0,\tilde{\tau}_N).
\]
For each \(N\in\N\), we now recover progressive measurability of the localized fixed point \(v_N\) by means of the stopped fixed point iteration on \([0,\tilde{\tau}_N]\). To this end, define recursively
\[
v_N^{(0)}:=v_{\mathrm{ref}},
\]
and
\[
v_N^{(m+1)}
:=
\mathbf 1_{\Omega_N}\,\Psi(v_N^{(m)})
+
\mathbf 1_{\Omega\setminus\Omega_N}\,v_{\mathrm{ref}},
\quad m\in\N_0.
\]
Since \(\Omega_N\in\mathcal F_0\subset\mathcal F_t\), it is enough to prove by induction that each iterate \(v_N^{(m)}\) is progressively measurable on \([0,\tilde{\tau}_N]\).
The initial iterate \(v_N^{(0)}=v_{\mathrm{ref}}\) is progressively measurable. Assume that \(v_N^{(m)}\) is progressively measurable. Then
\[
\bar u_N^{(m)}:=v_N^{(m)}+U
\]
is progressively measurable as well. Let
\(
\rX_N^{(m)},\, \mathrm Z_N^{(m)},\, J_N^{(m)}
\)
denote the stochastic Lagrangian quantities associated with \(\bar u_N^{(m)}\). Since \(\bar u_N^{(m)}\) is progressively measurable and the coefficients \(Q_k\) are smooth, the corresponding stochastic flow \(\rX_N^{(m)}\) is adapted and has continuous trajectories; hence \(\rX_N^{(m)}\) is progressively measurable. Therefore \(\nabla \rX_N^{(m)}\) is progressively measurable, and so are
\[
\mathrm Z_N^{(m)}=(\nabla \rX_N^{(m)})^{-1}
\ \text{ and }\
J_N^{(m)}=\det\nabla \rX_N^{(m)},
\]
by continuity of inversion and determinant on the neighborhood where \(\nabla \rX_N^{(m)}\) is invertible.
We then recover the transformed density by
\[
\bar\varrho_N^{(m)}:=\frac{\varrho_0}{J_N^{(m)}}.
\]
Thus \(\bar\varrho_N^{(m)}\) is progressively measurable.
Let \(v_N^{(m+1)}\) be the unique solution of
\[
\left\{
\begin{aligned}
\partial_t v_N^{(m+1)}+\mathcal A(x,D)v_N^{(m+1)}
&=
F_u(\bar u_N^{(m)}),
\\
\mathcal B(x,D)v_N^{(m+1)}
&=
F_\Gamma(\bar u_N^{(m)}),
\\
v_N^{(m+1)}(0)&=u_0.
\end{aligned}
\right.
\]
Since \(\bar u_N^{(m)}\), \(\mathrm Z_N^{(m)}\), \(J_N^{(m)}\), and \(\bar\varrho_N^{(m)}\) are progressively measurable, the nonlinearities
\(
F_u(\bar u_N^{(m)}), \,
F_\Gamma(\bar u_N^{(m)})
\)
are progressively measurable as well. Moreover, the Lam\'e system defining \(v_N^{(m+1)}\) is deterministic, continuous, and causal. Consequently, \(v_N^{(m+1)}\) is progressively measurable. This proves the induction step.
Moreover, on \(\Omega_N\), the map \(\Psi\) is a strict contraction on \(\mathbb B_r(0,\tilde\tau_N)\). Therefore, for a.e. \(\omega\in\Omega_N\),
\[
v_N^{(m)}(\omega)
\to
v_N(\omega) \
\text{ in } \
\mathbb E_{1,\tilde\tau_N(\omega)}
\ \text{ as }m\to\infty,
\]
where
\(
v_N(\omega)\in \mathbb B_r(0,\tilde\tau_N(\omega))
\)
is the unique fixed point. Since the iterates \(v_N^{(m)}\) are progressively measurable, it follows that \(v_N\) is progressively measurable on \([0,\tilde\tau_N]\).
Finally, setting
\(
\bar u_N:=v_N+U,
\)
and defining the associated quantities
\(
\rX_N,\, \mathrm Z_N,\, J_N
\)
as in the pathwise construction, we obtain by the same argument that these processes are progressively measurable on \([0,\tilde\tau_N]\). We recover the transformed density by
\[
\bar\varrho_N:=\frac{\varrho_0}{J_N},
\]
which is progressively measurable as well. Since \(v_N\) is a fixed point of \(\Psi\), the tuple
\[
(\bar\varrho_N,v_N,\rX_N,\mathrm Z_N,J_N)
\]
solves the transformed fixed-domain problem on \([0,\tilde\tau_N]\).
\end{step}

\begin{step}[Patching and reconstruction of the Eulerian solution]\mbox{} \\
For each \(N\in\N\), let
\(
\bar\varrho_N,\, v_N,\,
\rX_N,\,
\mathrm Z_N,\,
J_N
\)
denote the progressively measurable localized fixed point and the associated Lagrangian quantities on the stochastic interval \([0,\tilde\tau_N]\) obtained in \autoref{step picard proof}. To this end, define the disjoint sets
\[
A_1:=\Omega_1 \ \text{ and }\
A_N:=\Omega_N\setminus\Omega_{N-1},
\quad N\ge2.
\]
Then \((A_N)_{N\in\N}\) is a partition of \(\Omega\) up to a null set, and each \(A_N\in\mathcal F_0\).
Next, define the global stopping time
\[
\tau
:=
\sum_{N=1}^\infty \mathbf 1_{A_N}\,\tilde\tau_N
\]
and the patched Lagrangian solution by
\[
v
:=
\sum_{N=1}^\infty \mathbf 1_{A_N}\,v_N\ \text{ and }\
\bar\varrho
:=
\sum_{N=1}^\infty \mathbf 1_{A_N}\,\bar\varrho_N.
\]
Likewise, we define the quantities \(\rX\), \(\mathrm Z\), and \(J\).
Since \(A_N\in\mathcal F_0\subset\mathcal F_t\) and each \(\tilde\tau_N\) is an \((\mathcal F_t)_t\)-stopping time, it follows that \(\tau\) is an \((\mathcal F_t)_t\)-stopping time. Moreover, since each \(v_N\), \(\bar\varrho_N\), \(\rX_N\), \(\mathrm Z_N\), and \(J_N\) is progressively measurable, the patched objects \(v\), \(\bar\varrho\), \(\rX\), \(\mathrm Z\), and \(J\) are progressively measurable as well.
By construction, on each set \(A_N\) we have
\[
(v,\bar\varrho,\rX,\mathrm Z,J)
=
(v_N,\bar\varrho_N,\rX_N,\mathrm Z_N,J_N)
\ \text{ on }[0,\tau].
\]
Hence the tuple \((\bar\varrho,v,\rX,\mathrm Z,J)\) solves the transformed fixed-domain problem on the stochastic interval \([0,\tau]\), and
\[
\bar\varrho=\frac{\varrho_0}{J}
\ \text{ on }[0,\tau].
\]
Finally, we return to Eulerian coordinates by applying the inverse Lagrangian transformation. Let
\[
\rF(t,\omega,\cdot):=\rX(t,\omega,\cdot)^{-1}
\]
denote the inverse flow map. We then define the moving domain and the Eulerian unknowns by
\[
\Omega_t(\omega):=\rX(t,\omega,\Omega_0),
\]
and
\[
\varrho(t,\omega,x)
:=
\bar\varrho\bigl(t,\omega,\rF(t,\omega,x)\bigr) \ \text{ and } \
u(t,\omega,x)
:=
\bigl(v+U\bigr)\bigl(t,\omega,\rF(t,\omega,x)\bigr) \ \text{ for } \
x\in\Omega_t(\omega).
\]
Thus the original free-boundary solution in Eulerian variables is recovered from the patched Lagrangian solution by composition with the inverse transformation \(\rF\). In particular,
\[
(\varrho,u,(\Omega_t)_{t\in[0,\tau]})
\]
is a progressively measurable local strong solution in the sense of \autoref{def: local strong solution} of the stochastic free boundary value problem \eqref{eq:stochastic-free-boundary-CNSE-RHS}.
\end{step}
\end{proof}

\appendix

\section{Auxiliary estimates for the stochastic Lagrangian flow}
\label{appendix}
\noindent
In this appendix, we collect the technical auxiliary results needed for the control of
the stochastic Lagrangian map in \autoref{prop:pathwise-stochastic-lagrangian}.
The first two lemmas are purely functional analytic, the third one gives the ODE
estimate for the auxiliary flow \(\mathrm Y\), and the fourth one provides the
time-dependent inverse and determinant bounds.

\begin{lem}
\label{lem:aux-space-lagrangian}
Let \(q\in(3,\infty)\). Then the following assertions hold.

\begin{enumerate}
\item[(i)] The spaces \(\rH^{1,q}(\Omega_0)\) and \(\rH^{2,q}(\Omega_0)\) are
Banach algebras. More precisely, there exists \(C>0\) depending on $q$ and $\Omega_0$ such that
\[
\|fg\|_{\rH^{1,q}(\Omega_0)}
\le
C\|f\|_{\rH^{1,q}(\Omega_0)}\|g\|_{\rH^{1,q}(\Omega_0)},
\]
and
\[
\|fg\|_{\rH^{2,q}(\Omega_0)}
\le
C\|f\|_{\rH^{2,q}(\Omega_0)}\|g\|_{\rH^{2,q}(\Omega_0)}.
\]

\item[(ii)] Let \(F\in \rC_b^2(\R^3;\R^m)\) and
\(Y\in \rH^{2,q}(\Omega_0;\R^3)\). Then
\[
\|F\circ Y\|_{\rH^{2,q}(\Omega_0)}
\le
C\|F\|_{\rC_b^2(\R^3)}
\bigl(
1+\|Y-\mathrm{id}\|_{\rH^{2,q}(\Omega_0)}
+\|Y-\mathrm{id}\|_{\rH^{2,q}(\Omega_0)}^2
\bigr).
\]

\item[(iii)] Let \(F\in \rC_b^3(\R^3;\R^m)\) and
\(Y_1,Y_2\in \rH^{2,q}(\Omega_0;\R^3)\). Then
\[
\|F\circ Y_1-F\circ Y_2\|_{\rH^{2,q}(\Omega_0)}
\le
C\|F\|_{\rC_b^3(\R^3)}
\bigl(
1+\|Y_1-\mathrm{id}\|_{\rH^{2,q}}
+\|Y_2-\mathrm{id}\|_{\rH^{2,q}}
\bigr)^2
\|Y_1-Y_2\|_{\rH^{2,q}(\Omega_0)}.
\]

\item[(iv)] There exist \(\eps_*>0\) and \(C>0\) depending on $q$ and $\Omega_0$ such that the
following holds. If
\[
A,B\in \rH^{1,q}(\Omega_0;\R^{3\times3}),
\qquad
\|A-I_3\|_{\rH^{1,q}(\Omega_0)}
+
\|B-I_3\|_{\rH^{1,q}(\Omega_0)}
\le \eps_*,
\]
then \(A(y)\) and \(B(y)\) are invertible for every \(y\in\Omega_0\), and
\[
\|A^{-1}-B^{-1}\|_{\rH^{1,q}(\Omega_0)}
+
\|\det A-\det B\|_{\rH^{1,q}(\Omega_0)}
\le
C\|A-B\|_{\rH^{1,q}(\Omega_0)}.
\]
In particular, taking \(B=I_3\),
\[
\|A^{-1}-I_3\|_{\rH^{1,q}(\Omega_0)}
+
\|\det A-1\|_{\rH^{1,q}(\Omega_0)}
\le
C\|A-I_3\|_{\rH^{1,q}(\Omega_0)}.
\]
\end{enumerate}
\end{lem}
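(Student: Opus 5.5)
The statement to prove is \autoref{lem:aux-space-lagrangian}. Throughout I use the Sobolev embedding $\rH^{1,q}(\Omega_0)\hookrightarrow \rC(\overline{\Omega}_0)$, valid for $q>3$ since $\Omega_0$ is a bounded $\rC^2$ (hence Lipschitz) domain, with $\|f\|_{\rL^\infty}\le C\|f\|_{\rH^{1,q}}$. The four parts are treated in order, each building on the previous one.

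\emph{Part (i).} By the Leibniz rule, $\nabla(fg)=f\nabla g+g\nabla f$. This gives
\[
\|fg\|_{\rH^{1,q}}\le \|f\|_{\rL^\infty}\|g\|_{\rH^{1,q}}+\|g\|_{\rL^\infty}\|f\|_{\rH^{1,q}},
\]
and the embedding finishes the $\rH^{1,q}$ case. For $\rH^{2,q}$, write $\nabla^2(fg)=f\nabla^2g+2\nabla f\otimes\nabla g+g\nabla^2 f$. The mixed term is bounded in $\rL^q$ by $\|\nabla f\|_{\rL^\infty}\|\nabla g\|_{\rL^q}$, since $\nabla f\in\rH^{1,q}\hookrightarrow\rL^\infty$.

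\emph{Part (ii).} Write $Y=\mathrm{id}+W$ with $W\in\rH^{2,q}$. Then:
\begin{itemize}
\item $|F\circ Y|\le\|F\|_{\rC_b^0}$, and $\Omega_0$ is bounded, so the $\rL^q$ norm is controlled.
\item $\nabla(F\circ Y)=DF(Y)(I_3+\nabla W)$, which is bounded in $\rL^q$ by $\|F\|_{\rC^1_b}(1+\|W\|_{\rH^{1,q}})$.
\item $\nabla^2(F\circ Y)=D^2F(Y)[(I+\nabla W),(I+\nabla W)]+DF(Y)\nabla^2W$. The quadratic term in $\nabla W$ is controlled using $\nabla W\in\rL^\infty$, via $\|\nabla W\|_{\rL^\infty}\|\nabla W\|_{\rL^q}\le C\|W\|_{\rH^{2,q}}^2$.
\end{itemize}
Together these give the stated factor $1+\|W\|+\|W\|^2$.

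\emph{Part (iii).} Start from
\[
F\circ Y_1-F\circ Y_2=\int_0^1 DF(Y_s)\,\d s\,(Y_1-Y_2),\qquad Y_s=sY_1+(1-s)Y_2.
\]
Apply the algebra property from (i) in $\rH^{2,q}$, together with (ii) applied to $DF\in\rC_b^2$ and $Y_s$. Since $\|Y_s-\mathrm{id}\|\le\|Y_1-\mathrm{id}\|+\|Y_2-\mathrm{id}\|$, the factor $(1+a+a^2)$ is absorbed into $(1+\cdots)^2$. One point needs care: (i) requires $\|\cdot\|_{\rH^{2,q}}$ of the constant-shifted function, but $\rC_b$ bounds plus the bounded domain handle this.

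\emph{Part (iv).} This is the step needing the most care. The determinant $\det A$ is a polynomial of degree 3 in the entries, so (i) gives
\[
\|\det A-\det B\|_{\rH^{1,q}}\le C(1+\|A\|+\|B\|)^2\|A-B\|_{\rH^{1,q}},
\]
which is bounded because $\|A\|,\|B\|\le |I_3|+\eps_*$. For the inverse, choose $\eps_*$ with $C_{\mathrm{emb}}\eps_*\le 1/2$. Then $\|A-I_3\|_{\rL^\infty}\le1/2$ pointwise, so $A(y)$ is invertible and the Neumann series
\[
A^{-1}=\sum_{n\ge0}(I_3-A)^n
\]
converges in $\rH^{1,q}$, provided also $C_{\mathrm{alg}}\eps_*\le1/2$, using (i). This gives $\|A^{-1}\|_{\rH^{1,q}}\le C$, and likewise for $B$. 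The resolvent identity $A^{-1}-B^{-1}=A^{-1}(B-A)B^{-1}$ combined with (i) then yields the Lipschitz bound. Finally, set $B=I_3$ for the special case.
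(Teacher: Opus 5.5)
Your proposal is correct and follows the paper's proof essentially step by step: the Leibniz rule with $\rH^{1,q}\hookrightarrow\rL^\infty$ for (i); the chain rule with the quadratic term controlled via $\nabla Y\in\rL^\infty$ for (ii); the mean-value integral combined with (i) and with (ii) applied to $DF\in\rC_b^2$ for (iii); and a Neumann series in the Banach algebra, the resolvent identity, and multilinearity of the determinant for (iv). The only cosmetic difference is in (ii): you write $Y=\mathrm{id}+W$ from the outset, whereas the paper first bounds in terms of $\|Y\|_{\rH^{2,q}}$ and then uses $\|Y\|_{\rH^{2,q}}\le C(1+\|Y-\mathrm{id}\|_{\rH^{2,q}})$.
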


\begin{proof}
(i). Since \(q>3\), we have the Sobolev embeddings
\[
\rH^{1,q}(\Omega_0)\hookrightarrow \rL^\infty(\Omega_0),
\qquad
\rH^{2,q}(\Omega_0)\hookrightarrow \rW^{1,\infty}(\Omega_0).
\]
Thus, (i) follows by differentiating the product.

(ii). Since \(F\in \rC_b^2(\R^3;\R^m)\), we have
\[
\|F\circ Y\|_{\rL^q(\Omega_0)}
\le
|\Omega_0|^{1/q}\|F\|_{\rL^\infty(\R^3)}
\le
C\|F\|_{\rC_b^2(\R^3)}.
\]
Moreover, by the chain rule,
\[
\nabla(F\circ Y)=DF(Y)\nabla Y,
\]
hence
\[
\|\nabla(F\circ Y)\|_{\rL^q(\Omega_0)}
\le
\|DF\|_{\rL^\infty(\R^3)}
\|\nabla Y\|_{\rL^q(\Omega_0)}
\le
C\|F\|_{\rC_b^2(\R^3)}\|Y\|_{\rH^{2,q}(\Omega_0)}.
\]
Differentiating once more, we obtain
\[
\nabla^2(F\circ Y)
=
D^2F(Y)[\nabla Y,\nabla Y]
+
DF(Y)\nabla^2Y.
\]
Therefore,
\[
\|\nabla^2(F\circ Y)\|_{\rL^q(\Omega_0)}
\le
\|D^2F\|_{\rL^\infty(\R^3)}
\|\nabla Y\|_{\rL^\infty(\Omega_0)}
\|\nabla Y\|_{\rL^q(\Omega_0)}
+
\|DF\|_{\rL^\infty(\R^3)}
\|\nabla^2Y\|_{\rL^q(\Omega_0)}.
\]
Since \(q>3\), we have the Sobolev embedding
\[
\rH^{2,q}(\Omega_0)\hookrightarrow \rW^{1,\infty}(\Omega_0),
\]
so
\[
\|\nabla Y\|_{\rL^\infty(\Omega_0)}
\le
C\|Y\|_{\rH^{2,q}(\Omega_0)}.
\]
Combining the previous estimates, we get
\[
\|F\circ Y\|_{\rH^{2,q}(\Omega_0)}
\le
C\|F\|_{\rC_b^2(\R^3)}
\Bigl(
1+\|Y\|_{\rH^{2,q}(\Omega_0)}
+\|Y\|_{\rH^{2,q}(\Omega_0)}^2
\Bigr).
\]
Finally, since \(\mathrm{id}\in \rH^{2,q}(\Omega_0;\R^3)\),
\[
\|Y\|_{\rH^{2,q}(\Omega_0)}
\le
\|Y-\mathrm{id}\|_{\rH^{2,q}(\Omega_0)}
+
\|\mathrm{id}\|_{\rH^{2,q}(\Omega_0)}
\le
C\Bigl(1+\|Y-\mathrm{id}\|_{\rH^{2,q}(\Omega_0)}\Bigr),
\]
and the claim follows.

(iii). Define $
Z_\tau:=Y_2+\tau(Y_1-Y_2),$ for $\tau\in[0,1].$ Then
\[
F(Y_1)-F(Y_2)
=
\int_0^1 DF(Z_\tau)\,(Y_1-Y_2)\,d\tau.
\]
By (i) and (ii), applied to \(DF\in \rC_b^2\), we get
\[
\|DF(Z_\tau)(Y_1-Y_2)\|_{\rH^{2,q}}
\le
C\|F\|_{\rC_b^3}
\Bigl(
1+\|Z_\tau-\mathrm{id}\|_{\rH^{2,q}}
+\|Z_\tau-\mathrm{id}\|_{\rH^{2,q}}^2
\Bigr)
\|Y_1-Y_2\|_{\rH^{2,q}}.
\]
Since
\[
\|Z_\tau-\mathrm{id}\|_{\rH^{2,q}}
\le
\|Y_1-\mathrm{id}\|_{\rH^{2,q}}
+
\|Y_2-\mathrm{id}\|_{\rH^{2,q}},
\]
integration in \(\tau\) gives (iii).

(iv). Since \(q>3\), we have the Sobolev embedding $\rH^{1,q}(\Omega_0)\hookrightarrow \rL^\infty(\Omega_0),$ and by part (i), \(\rH^{1,q}(\Omega_0)\) is a Banach algebra. Thus there exists
\(C_{\mathrm{alg}}>0\) such that
\[
\|FG\|_{\rH^{1,q}(\Omega_0)}
\le
C_{\mathrm{alg}}
\|F\|_{\rH^{1,q}(\Omega_0)}
\|G\|_{\rH^{1,q}(\Omega_0)}.
\]
Choose \(\eps_*>0\) so small that $ C_{\mathrm{alg}}\eps_*<\frac12$
and, by the Sobolev embedding,
\[
\|M-I_3\|_{\rH^{1,q}(\Omega_0)}\le \eps_*
\quad\Longrightarrow\quad
\|M-I_3\|_{\rL^\infty(\Omega_0)}\le \frac12.
\]
Hence every such matrix field \(M\) is pointwise invertible. Moreover, setting
\(N:=I_3-M\), we have $
C_{\mathrm{alg}}\|N\|_{\rH^{1,q}(\Omega_0)}<1,$ so the Neumann series $
\sum_{k=0}^\infty N^k$ converges in the Banach algebra
\(\rH^{1,q}(\Omega_0;\R^{3\times3})\). Its sum is \(M^{-1}\). In particular $
\|M^{-1}\|_{\rH^{1,q}(\Omega_0)}\le C,$ where \(C\) depends only on \(q\) and \(\Omega_0\).

Applying this to \(A\) and \(B\), we get
\[
A^{-1}-B^{-1}=A^{-1}(B-A)B^{-1},
\]
and therefore, by the Banach algebra property,
\[
\|A^{-1}-B^{-1}\|_{\rH^{1,q}(\Omega_0)}
\le
C\|A-B\|_{\rH^{1,q}(\Omega_0)}.
\]

For the determinant, \(\det A-\det B\) is a finite sum of terms, each
containing exactly one factor of \(A-B\), while all remaining factors are
entries of \(A\) or \(B\). Since \(\|A\|_{\rH^{1,q}}\) and \(\|B\|_{\rH^{1,q}}\)
are uniformly bounded on
\[
\|A-I_3\|_{\rH^{1,q}}+\|B-I_3\|_{\rH^{1,q}}\le \eps_*,
\]
the Banach algebra property yields
\[
\|\det A-\det B\|_{\rH^{1,q}(\Omega_0)}
\le
C\|A-B\|_{\rH^{1,q}(\Omega_0)}.
\]
This concludes the proof.
\end{proof}

We now turn to the auxiliary flow \(\mathrm Y\) and establish the basic well-posedness and regularity estimates for its pathwise ODE.

\begin{lem}
\label{lem:aux-flow-lagrangian}
Let $T>0$, \(p\in(1,\infty)\), \(q\in(3,\infty)\), 
\(
a\in \rC([0,T];\rC_b^3(\R^3;\R^{3\times3})),
\)
\(
b\in \rL^p(0,T;\rH^{2,q}(\Omega_0;\R^3)).
\)
Consider the integral equation
\begin{equation}
\label{eq:aux-flow-ode}
Y_t
=
\mathrm{id}
+
\int_0^t a_s(Y_s)\,b(s)\,ds.
\end{equation}
Then there exists a unique solution
\[
Y-\mathrm{id}
\in
\rC([0,T];\rH^{2,q}(\Omega_0;\R^3)),
\qquad
 \frac{\d }{\d t} Y
\in
\rL^p(0,T;\rH^{2,q}(\Omega_0;\R^3)).
\]
Moreover, for
\(
L_a(t):=1+\sup_{0\le s\le t}\|a_s\|_{\rC_b^3(\R^3)},
\text{ and }
B_b(t):=\|b\|_{\rL^p(0,t;\rH^{2,q}(\Omega_0))},
\)
there exists \(C>0\) depending only on $p$, $q$, $\Omega_0$ such that
\begin{equation}
\label{eq:aux-flow-bootstrap}
\|Y-\mathrm{id}\|_{\rL^\infty(0,t;\rH^{2,q})}
\le
C\,L_a(t)\,t^{1-\frac1p}B_b(t)\,
\Bigl(
1+\|Y-\mathrm{id}\|_{\rL^\infty(0,t;\rH^{2,q})}
+\|Y-\mathrm{id}\|_{\rL^\infty(0,t;\rH^{2,q})}^2
\Bigr)
\end{equation}
for every \(t\in[0,T]\). In particular, if $
\beta_{a,b}(t):=C\,L_a(t)\,t^{1-\frac1p}B_b(t)\le \frac18,$ then
\begin{equation}
\label{eq:aux-flow-smalltime}
\|Y-\mathrm{id}\|_{\rL^\infty(0,t;\rH^{2,q})}
\le 2,
\qquad
\|Y-\mathrm{id}\|_{\rL^\infty(0,t;\rH^{2,q})}
\le 7\,\beta_{a,b}(t),
\end{equation}
and
\begin{equation}
\label{eq:aux-flow-derivative}
\|\frac{\d}{\d t} Y\|_{\rL^p(0,t;\rH^{2,q})}
\le
C\,L_a(t)\,\bigl(1+\beta_{a,b}(t)\bigr)\,B_b(t).
\end{equation}
Moreover, for every \(\theta\in(0,1)\)
\begin{equation}
\label{eq:aux-flow-fractional}
\|Y-\mathrm{id}\|_{\rH^{\theta,p}(0,t;\rH^{2,q})}
\le
C\,t^{1-\theta}\,
\|\frac{\d}{\d t}   Y\|_{\rL^p(0,t;\rH^{2,q})}.
\end{equation}
\end{lem}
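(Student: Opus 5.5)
We treat \eqref{eq:aux-flow-ode} as a fixed-point problem for $W:=Y-\mathrm{id}$ in the Banach space $\rC([0,t];\rH^{2,q}(\Omega_0;\R^3))$:
\[
W_t=\int_0^t a_s(\mathrm{id}+W_s)\,b(s)\,ds .
\]
The basic pointwise-in-time bound combines \autoref{lem:aux-space-lagrangian}(i) and (ii). Since $a_s\in\rC_b^3\subset\rC_b^2$, we get
\[
\|a_s(Y_s)b(s)\|_{\rH^{2,q}}\le C\|a_s\|_{\rC_b^2}\bigl(1+\|W_s\|_{\rH^{2,q}}+\|W_s\|_{\rH^{2,q}}^2\bigr)\|b(s)\|_{\rH^{2,q}} .
\]
Integrating in time and applying Hölder gives $\int_0^t\|b\|\,ds\le t^{1-1/p}B_b(t)$. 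Taking the supremum over $[0,t]$ then yields \eqref{eq:aux-flow-bootstrap} directly, with $L_a(t)$ absorbing $\|a_s\|_{\rC_b^3}$.

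For existence and uniqueness we use a Picard argument on a short interval. By \autoref{lem:aux-space-lagrangian}(iii), with $F=a_s\in\rC_b^3$, we get the Lipschitz estimate
\[
\|a_s(\mathrm{id}+W^1)-a_s(\mathrm{id}+W^2)\|_{\rH^{2,q}}\le C L_a (1+\|W^1\|+\|W^2\|)^2\|W^1-W^2\|_{\rH^{2,q}} .
\]
Multiplying by $b(s)$ and using the algebra property, the map is therefore a contraction on a ball of radius $2$ in $\rC([0,t_0];\rH^{2,q})$ whenever $C L_a t_0^{1-1/p}B_b(t_0)$ is small. Continuity in time of the integral holds because $b\in\rL^p\subset \rL^1$ and the integrand is strongly measurable, since $a\in\rC([0,T];\rC_b^3)$ and composition is continuous by (iii).

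To reach all of $[0,T]$ we iterate on successive intervals; the intervals have length bounded below because $s\mapsto \|b\|_{\rL^p(s,s+h)}$ is uniformly small for small $h$. The one delicate point is that the radius must be restarted. On an interval $[t_k,t_{k+1}]$ we write $Y=Y_{t_k}+\int_{t_k}^t\cdots$, and the smallness condition then involves $\|Y_{t_k}-\mathrm{id}\|$. A priori this could grow, but it grows at most linearly per step because \eqref{eq:aux-flow-bootstrap} is only quadratic. We expect this to be the main obstacle: a Gronwall-type argument is needed to exclude blow-up of $\|W\|$ in finite time under merely quadratic growth. We plan to handle it by noting that with the integrated-in-time form, $\varphi(t)=\|W\|_{\rL^\infty(0,t)}$ satisfies $\varphi\le \int_0^t k(s)(1+\varphi+\varphi^2)$ with $k\in\rL^1$. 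This does not rule out blow-up by itself, so instead we use the flow structure: $\nabla Y$ satisfies a \emph{linear} equation $\frac{d}{dt}\nabla Y=Da_s(Y)\nabla Y\, b+a_s(Y)\nabla b$. Gronwall in $\rL^\infty$, via $\rH^{2,q}\hookrightarrow \rW^{1,\infty}$, and then in $\rH^{1,q}$ gives a linear bound on $\|W\|_{\rH^{2,q}}$, so global existence follows. Uniqueness is immediate from the local Lipschitz estimate.

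For \eqref{eq:aux-flow-smalltime}, set $x=\|W\|_{\rL^\infty(0,t)}$. It satisfies $x\le\beta(1+x+x^2)$, with $x(0)=0$ and $x$ continuous in $t$. For $\beta\le 1/8$ the quadratic inequality has solution set $[0,x_-]\cup[x_+,\infty)$, where $x_-\le 2$ and $x_+>2$; by continuity $x$ stays in the lower branch, so $x\le 2$. Inserting this back gives $x\le\beta(1+2+4)=7\beta$. Next, \eqref{eq:aux-flow-derivative} follows from $\frac{d}{dt}Y=a_t(Y_t)b(t)$ and the pointwise bound together with $1+x+x^2\le C(1+\beta)$. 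Finally, \eqref{eq:aux-flow-fractional} holds because $W(0)=0$: we use $\|W\|_{\rL^p(0,t)}\le t\|\partial_tW\|_{\rL^p}$ and interpolate $\rH^{\theta,p}$ between $\rL^p$ and $\rH^{1,p}_0$ on $(0,t)$. An extension-by-zero/reflection operator gives constants independent of $t$ for functions vanishing at $0$, and rescaling $(0,t)$ to $(0,1)$ yields the factor $t^{1-\theta}$.
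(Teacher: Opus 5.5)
Your proposal is correct and follows essentially the same route as the paper: a local contraction in $\rC([0,\tau];\rH^{2,q})$ via \autoref{lem:aux-space-lagrangian}(ii)--(iii), exclusion of blow-up through linear Gronwall bounds for $\nabla Y$ in $\rL^\infty$ and then in $\rH^{1,q}$ (the paper does $\nabla^2 Y$ in $\rL^q$), the continuity argument giving $Y_*\le 2$ and hence $Y_*\le 7\beta_{a,b}$, and the derivative bound read off from $\frac{\d}{\d t}Y=a_t(Y_t)b(t)$. The one real difference is \eqref{eq:aux-flow-fractional}: the paper cites \autoref{lem:weighted paraprod estimate}(ii), which contains no such statement, whereas your Poincar\'e-plus-interpolation/rescaling argument for functions vanishing at $t=0$ actually proves it (with the harmless caveat that the lower-order $\rL^p$ part contributes $t\,\|\frac{\d}{\d t}Y\|_{\rL^p}$, so the constant depends on an upper bound for $t$, e.g.\ on $T$).
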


\begin{proof}
Set \(E_\tau:=\rC([0,\tau];\rH^{2,q}(\Omega_0;\R^3))\), and for \(Z\in E_\tau\) define
\[
(\Phi Z)(t):=\mathrm{id}+\int_0^t a_s(Z_s)\,b(s)\,ds.
\]
By \autoref{lem:aux-space-lagrangian}(ii), for every \(Z\in E_\tau\),
\[
\|a_s(Z_s)\|_{\rH^{2,q}}
\le
C\,L_a(\tau)\Bigl(1+\|Z_s-\mathrm{id}\|_{\rH^{2,q}}+\|Z_s-\mathrm{id}\|_{\rH^{2,q}}^2\Bigr).
\]
Using also the Banach algebra property from \autoref{lem:aux-space-lagrangian}(i), we obtain on the ball
\[
\mathbb B_R^\tau
:=
\Bigl\{
Z\in E_\tau:\ \|Z-\mathrm{id}\|_{\rL^\infty(0,\tau;\rH^{2,q})}\le R
\Bigr\}
\]
the estimate
\[
\|\Phi Z-\mathrm{id}\|_{\rL^\infty(0,\tau;\rH^{2,q})}
\le
C\,L_a(\tau)\,\tau^{1-\frac1p}B_b(\tau)\,(1+R+R^2).
\]
Likewise, by \autoref{lem:aux-space-lagrangian}(iii) and again (i),
\[
\|\Phi Z_1-\Phi Z_2\|_{\rL^\infty(0,\tau;\rH^{2,q})}
\le
C\,L_a(\tau)\,\tau^{1-\frac1p}B_b(\tau)\,(1+R)^2
\|Z_1-Z_2\|_{\rL^\infty(0,\tau;\rH^{2,q})}.
\]
Hence, for \(\tau>0\) small enough, \(\Phi\) is a contraction on \(\mathbb B_R^\tau\), so
\eqref{eq:aux-flow-ode} has a unique solution on \([0,\tau]\).

Let \([0,T_{\max})\) be the maximal interval of existence of the local solution.
We claim that
\[
\sup_{0\le s<T_{\max}}\|Y_s-\mathrm{id}\|_{\rH^{2,q}(\Omega_0)}<\infty.
\]

We first estimate the first derivatives. Differentiating \eqref{eq:aux-flow-ode} once in space, we obtain
\[
\nabla Y_t
=
I_3
+
\int_0^t
\Bigl(
D a_s(Y_s)\,\nabla Y_s\, b(s)
+
a_s(Y_s)\,\nabla b(s)
\Bigr)\,ds,
\]
where all products/contractions are understood componentwise. Hence, using
\autoref{lem:aux-space-lagrangian}(i)-(ii) together with
\(\rH^{2,q}\hookrightarrow \rW^{1,\infty}\), which holds since $q>3$, we have
\[
\|\nabla Y_t\|_{\rL^\infty}
\le
1
+
C\,L_a(T)\int_0^t
\|b(s)\|_{\rH^{2,q}}
\Bigl(
1+\|\nabla Y_s\|_{\rL^\infty}
\Bigr)\,ds.
\]
Setting $
G(t):=\|\nabla Y_t\|_{\rL^\infty},$ Gronwall's lemma gives
\[
\sup_{0\le s<T_{\max}}G(s)<\infty.
\]

We next estimate the second derivatives. Differentiating \eqref{eq:aux-flow-ode} twice in space and using the chain and product rules, we obtain schematically
\[
\nabla^2\!\bigl(a_s(Y_s)b(s)\bigr)
=
\Bigl(
D^2 a_s(Y_s)[\nabla Y_s,\nabla Y_s]
+
D a_s(Y_s)\,\nabla^2 Y_s
\Bigr)b(s)
+
2\bigl(D a_s(Y_s)\,\nabla Y_s\bigr)\nabla b(s)
+
a_s(Y_s)\,\nabla^2 b(s).
\]
Therefore, by \autoref{lem:aux-space-lagrangian}(i)-(ii), the boundedness of the derivatives of \(a\), and the embedding
\(\rH^{2,q}\hookrightarrow \rW^{1,\infty}\),
\[
\|\nabla^2 Y_t\|_{\rL^q}
\le
C\,L_a(T)\int_0^t
\|b(s)\|_{\rH^{2,q}}
\Bigl(
1+\|\nabla Y_s\|_{\rL^\infty}^2+\|\nabla^2 Y_s\|_{\rL^q}
\Bigr)\,ds.
\]
Since \(\sup_{0\le s<T_{\max}}\|\nabla Y_s\|_{\rL^\infty}<\infty\) and
\(b\in \rL^1(0,T;\rH^{2,q})\), another application of Gronwall's lemma gives
\[
\sup_{0\le s<T_{\max}}\|\nabla^2 Y_s\|_{\rL^q}<\infty.
\]

Finally, from \eqref{eq:aux-flow-ode} and the boundedness of \(a\),
\[
\|Y_t-\mathrm{id}\|_{\rL^q}
\le
C\,L_a(T)\int_0^t \|b(s)\|_{\rL^q}\,ds
\le
C\,L_a(T)\int_0^t \|b(s)\|_{\rH^{2,q}}\,ds.
\]
Since \(\Omega_0\) is bounded,
\[
\|\nabla(Y_t-\mathrm{id})\|_{L^q}
=
\|\nabla Y_t-I_3\|_{L^q}
\le
|\Omega_0|^{1/q}\|\nabla Y_t-I_3\|_{L^\infty}
\lesssim 1+\|\nabla Y_t\|_{L^\infty}.
\]
Combining this with the \(L^q\)-bound for \(Y_t-\mathrm{id}\) and the
\(L^q\)-bound for \(\nabla^2Y_t\), we obtain
\[
\sup_{0\le s<T_{\max}}\|Y_s-\mathrm{id}\|_{H^{2,q}(\Omega_0)}<\infty.
\]
Hence the local fixed-point argument can be restarted at any time
\(t_0<T_{\max}\), with an existence time depending only on
\(L_a(T)\) and an upper bound for \(\|Y_{t_0}-\mathrm{id}\|_{\rH^{2,q}}\),
which is uniform by the previous estimate. This excludes finite-time blow-up,
and therefore \(T_{\max}=T\).

 Moreover,
\begin{equation}
    Y'(t)=a_t(Y_t)b(t)\qquad\text{for a.e. }t,
    \label{eq: aux 1}
\end{equation}
so \(\partial_tY\in \rL^p(0,T;\rH^{2,q})\).

Now set $
Y_*(t):=\|Y-\mathrm{id}\|_{\rL^\infty(0,t;\rH^{2,q})}.$ From \eqref{eq:aux-flow-ode}, H\"older's inequality, and \autoref{lem:aux-space-lagrangian}(i)-(ii),
\[
Y_*(t)
\le
C\,L_a(t)\,t^{1-\frac1p}B_b(t)\Bigl(1+Y_*(t)+Y_*(t)^2\Bigr),
\]
which is \eqref{eq:aux-flow-bootstrap}. If
\[
\beta _{a,b}(t):=C\,L_a(t)\,t^{1-\frac1p}B_b(t)\le \frac18,
\]
then, since \(Y_*(0)=0\) and \(Y_*\) is continuous, reasoning by contradiction it is possible to check that $
Y_*(t)\le 2.$ Substituting this into \eqref{eq:aux-flow-bootstrap}, we get $
Y_*(t)\le 7\,\beta_{a,b}(t),$ which concludes \eqref{eq:aux-flow-smalltime}.

Next, applying \autoref{lem:aux-space-lagrangian}(i)-(ii) to \eqref{eq: aux 1}, we obtain
\[
\|\frac{\d}{\d t}Y\|_{\rL^p(0,t;\rH^{2,q})}
\le
C\,L_a(t)\Bigl(1+Y_*(t)+Y_*(t)^2\Bigr)B_b(t).
\]
If \(\beta_{a,b}(t)\le \frac18\), then \(Y_*(t)\le 7\beta_{a,b}(t)\), and therefore
\[
\|\frac{\d}{\d t}Y\|_{\rL^p(0,t;\rH^{2,q})}
\le
C\,L_a(t)\,\bigl(1+\beta _{a,b}(t)\bigr)\,B_b(t),
\]
which is \eqref{eq:aux-flow-derivative}.

Finally, since \(Y(0)=\mathrm{id}\) by \eqref{eq:aux-flow-ode}, \autoref{lem:weighted paraprod estimate}(ii) applied to \(f=Y-\mathrm{id}\) yields
\[
\|Y-\mathrm{id}\|_{\rH^{\theta,p}(0,t;\rH^{2,q})}
\le
C\,t^{1-\theta}\,
\|\frac{\d}{\d t} Y\|_{\rL^p(0,t;\rH^{2,q})},
\]
that is, \eqref{eq:aux-flow-fractional}.
\end{proof}
\medskip

Finally, we state the time-dependent inverse and determinant estimates needed to control \(\mathrm Z\) and \(J\) once \(\nabla \rX\) stays close to \(I_3\).
\begin{lem}
\label{lem:aux-time-inverse-det}
Let \(q\in(3,\infty)\), \(p\in(1,\infty)\), \(\theta\in(0,1)\), and let
\[
A-I_3
\in
\rL^\infty(0,t;\rH^{1,q}(\Omega_0;\R^{3\times 3}))
\cap
\rH^{\theta,p}(0,t;\rH^{1,q}(\Omega_0;\R^{3\times 3})).
\]
Assume that $
\|A-I_3\|_{\rL^\infty(0,t;\rH^{1,q}(\Omega_0))}\le \eps_*,$ where \(\eps_*\) is the constant from
\autoref{lem:aux-space-lagrangian}(iv). Define $
Z:=A^{-1}$ and $
J:=\det A.$ Then
\[
Z-I_3,\quad J-1
\in
\rL^\infty(0,t;\rH^{1,q}(\Omega_0))
\cap
\rH^{\theta,p}(0,t;\rH^{1,q}(\Omega_0)),
\]
and there exists \(C=C(p,q,\theta,\Omega_0)>0\) such that
\[
\|Z-I_3\|_{\rL^\infty(0,t;\rH^{1,q})}
+
\|J-1\|_{\rL^\infty(0,t;\rH^{1,q})}
\le
C\|A-I_3\|_{\rL^\infty(0,t;\rH^{1,q})},
\]
and
\[
\|Z-I_3\|_{\rH^{\theta,p}(0,t;\rH^{1,q})}
+
\|J-1\|_{\rH^{\theta,p}(0,t;\rH^{1,q})}
\le
C\Bigl(
\|A-I_3\|_{\rH^{\theta,p}(0,t;\rH^{1,q})}
+
t^{1/p}\|A-I_3\|_{\rL^\infty(0,t;\rH^{1,q})}
\Bigr).
\]
\end{lem}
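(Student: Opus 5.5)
The statement to prove is \autoref{lem:aux-time-inverse-det}. I would treat it pointwise in time and then lift to the vector-valued time spaces. For each fixed \(s\in(0,t)\), the hypothesis gives \(\|A(s)-I_3\|_{\rH^{1,q}(\Omega_0)}\le\eps_*\). Hence \autoref{lem:aux-space-lagrangian}(iv) applies with \(B=I_3\): \(A(s)\) is pointwise invertible and
\[
\|Z(s)-I_3\|_{\rH^{1,q}}+\|J(s)-1\|_{\rH^{1,q}}\le C\|A(s)-I_3\|_{\rH^{1,q}}.
\]
Taking the essential supremum in \(s\) gives the \(\rL^\infty\)-bound. Strong measurability of \(Z-I_3\) and \(J-1\) as \(\rH^{1,q}\)-valued functions follows because \(M\mapsto M^{-1}\) and \(M\mapsto\det M\) are continuous (indeed Lipschitz, by (iv)) on the \(\eps_*\)-ball around \(I_3\) in \(\rH^{1,q}\).

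For the fractional time regularity I would use the intrinsic Sobolev--Slobodeckij characterization. For \(\theta\in(0,1)\), \(\theta\neq\tfrac12\) say, or otherwise via the equivalence \(\rH^{\theta,p}\)-\(\rW^{\theta',p}\) sandwich below, one has
\[
\|f\|_{\rW^{\theta,p}(0,t;E)}\simeq \|f\|_{\rL^p(0,t;E)}+\Bigl(\int_0^t\!\!\int_0^t\frac{\|f(r)-f(s)\|_E^p}{|r-s|^{1+\theta p}}\,\d r\,\d s\Bigr)^{1/p}.
\]
The key point is that \(\Phi:M\mapsto(M^{-1}-I_3,\det M-1)\) is Lipschitz on the \(\eps_*\)-ball, with constant \(C\) by \autoref{lem:aux-space-lagrangian}(iv). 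Hence \(\|\Phi(A(r))-\Phi(A(s))\|\le C\|A(r)-A(s)\|\), so the Gagliardo seminorm of \((Z-I_3,J-1)\) is bounded by \(C\) times that of \(A-I_3\). The \(\rL^p\)-part is controlled by \(t^{1/p}\) times the \(\rL^\infty\)-norm, which produces exactly the term \(t^{1/p}\|A-I_3\|_{\rL^\infty(0,t;\rH^{1,q})}\).

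The main obstacle is that \(\rH^{\theta,p}\) (Bessel potential) is not literally \(\rW^{\theta,p}\), and the Lipschitz superposition argument works directly only in the Slobodeckij (Besov \(\rB^\theta_{pp}\)) scale. My first attempt would be to observe that for \(\rH^{1,q}\) (a UMD space) one has the Triebel--Lizorkin characterization \(\rH^{\theta,p}=\rF^\theta_{p2}\). There I would use the difference-based characterization of \(\rF^\theta_{p2}\) on intervals via the square function
\[
\Bigl(\int_0^{t}\Bigl(\int_0^1 h^{-2\theta}\Bigl(h^{-1}\int_{|\xi|\le h}\|f(r+\xi)-f(r)\|_E\,\d\xi\Bigr)^2\frac{\d h}{h}\Bigr)^{p/2}\d r\Bigr)^{1/p},
\]
which is also monotone under pointwise Lipschitz maps. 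Then the same argument goes through verbatim, and the constants are uniform in \(t\) after extension by a bounded extension operator from \((0,t)\). The uniformity of the extension constant in \(t\) must be handled by extending \(A-I_3\) to a fixed interval with norm comparable up to the \(t^{1/p}\rL^\infty\) correction. This is also where that extra term naturally appears.

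If the square-function route proves too delicate, the fallback is a sandwich argument. I would write \(\rW^{\theta+\eta,p}\hookrightarrow\rH^{\theta,p}\hookrightarrow\rW^{\theta-\eta,p}\), accept a slightly different \(\theta\) on the right-hand side, and absorb the loss into the choice \(\alpha>\theta\) already made in \autoref{prop:pathwise-stochastic-lagrangian}.
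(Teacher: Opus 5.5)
Your core argument is the paper's proof. You bound the $\rL^\infty$-norm by applying \autoref{lem:aux-space-lagrangian}(iv) at each fixed time. You then insert a Lipschitz bound $\|Z(r)-Z(s)\|_{\rH^{1,q}}+\|J(r)-J(s)\|_{\rH^{1,q}}\le C\|A(r)-A(s)\|_{\rH^{1,q}}$ into the Gagliardo double integral. Finally, the $\rL^p(0,t)$-part is bounded by $t^{1/p}$ times the $\rL^\infty$-norm.

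One small correction. Applying (iv) with $A=A(r)$, $B=A(s)$ requires $\|A(r)-I_3\|+\|A(s)-I_3\|\le\eps_*$, and the hypothesis does not give this. The paper instead uses $Z(r)-Z(s)=A(r)^{-1}(A(s)-A(r))A(s)^{-1}$ together with $\sup_{\tau}\|A(\tau)^{-1}\|_{\rH^{1,q}}\le C$ and the multilinear expansion of $\det$. That route needs only $\|A(\tau)-I_3\|_{\rH^{1,q}}\le\eps_*$ for each $\tau$.

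The paper then simply reads the resulting $\rW^{\theta,p}$-bound as the $\rH^{\theta,p}$-bound, i.e.\ it tacitly identifies $\rH^{\theta,p}(0,t;\rH^{1,q})$ with the Slobodeckij space. So the issue you flag is genuine, and the paper does not address it either. Up to that point your proposal is as complete as the paper's proof.

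Your proposed repair, however, does not work.

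\emph{The square-function route.} The identification $\rH^{\theta,p}(\R;E)=\rF^\theta_{p2}(\R;E)$, with $\|\cdot\|_E$ inside the square function as in your ball-mean expression, is not a consequence of UMD. It fails unless $E$ is isomorphic to a Hilbert space; already for $\theta=0$, a Littlewood--Paley square function with $\|\cdot\|_E$ inside characterizes $\rL^p(\R;E)$ only for Hilbertian $E$. The space $\rH^{1,q}(\Omega_0)$ contains isomorphic copies of $\ell^q$ (take disjointly supported bumps), so it is not Hilbertian for $q>3$. For UMD spaces the correct characterization uses Rademacher sums. For $\rL^q$-type spaces this becomes a square function taken pointwise in $x$ inside the $\rL^p_t\rL^q_x$-norm. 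That quantity is not controlled by a Lipschitz bound in the $E$-norm, because $\nabla_x Z=-Z(\nabla_x A)Z$ is not a pointwise function of $A(t,x)$. One is led to a genuine vector-valued product estimate, which neither you nor the paper provides.

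\emph{The sandwich fallback.} It uses valid embeddings but proves a different lemma. To land in $\rH^{\theta,p}$ you need $Z-I_3\in\rW^{\theta+\eta,p}$. The right-hand side then contains $\|A-I_3\|_{\rW^{\theta+\eta,p}}$ (or $\|A-I_3\|_{\rH^{\theta+2\eta,p}}$) instead of $\|A-I_3\|_{\rH^{\theta,p}}$. This weaker statement can plausibly be absorbed in \autoref{prop:pathwise-stochastic-lagrangian}(iii). There \eqref{eq:aux-flow-fractional} holds for every $\theta\in(0,1)$, and $K_\alpha(t)t^{\alpha-\theta}$ leaves room as long as $\theta+2\eta<\alpha$. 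But it is not the lemma as stated.

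Finally, the term $t^{1/p}\|A-I_3\|_{\rL^\infty(0,t;\rH^{1,q})}$ has nothing to do with extension operators from $(0,t)$. It is just the $\rL^p(0,t)$-part of the norm, as you correctly say earlier.
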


\begin{proof}

The \(\rL^\infty_t\rH^{1,q}_x\)-bound follows by applying
\autoref{lem:aux-space-lagrangian}(iv) at each fixed time and taking the
supremum in time. In particular,
\[
\sup_{\tau\in[0,t]}
\Bigl(
\|A(\tau)^{-1}\|_{\rH^{1,q}(\Omega_0)}
+
\|\det A(\tau)\|_{\rH^{1,q}(\Omega_0)}
\Bigr)
\le C.
\]

For the fractional seminorm, for \(r,s\in[0,t]\) we have
\[
Z(r)-Z(s)
=
A(r)^{-1}\bigl(A(s)-A(r)\bigr)A(s)^{-1}.
\]
Hence, using the Banach algebra property and the uniform bound above,
\[
\|Z(r)-Z(s)\|_{\rH^{1,q}}
\le
C\|A(r)-A(s)\|_{\rH^{1,q}}.
\]
Likewise, since \(\det A\) is a polynomial of degree \(3\) in the entries of
\(A\), the difference \(\det A(r)-\det A(s)\) is a finite sum of terms each
containing exactly one factor of \(A(r)-A(s)\), while all remaining factors are
entries of \(A(r)\) or \(A(s)\). Since \(\rH^{1,q}(\Omega_0)\) is a Banach
algebra and $
\sup_{\tau\in[0,t]}\|A(\tau)\|_{\rH^{1,q}(\Omega_0)}\le C,$ we obtain
\[
\|J(r)-J(s)\|_{\rH^{1,q}}
=
\|\det A(r)-\det A(s)\|_{\rH^{1,q}}
\le
C\|A(r)-A(s)\|_{\rH^{1,q}}.
\]
After raising to the power \(p\), dividing by \(|r-s|^{1+\theta p}\), and
integrating over \((0,t)^2\), we obtain
\[
[Z-I_3]_{\rW^{\theta,p}(0,t;\rH^{1,q})}
+
[J-1]_{\rW^{\theta,p}(0,t;\rH^{1,q})}
\le
C[A-I_3]_{\rW^{\theta,p}(0,t;\rH^{1,q})}.
\]

Moreover, by the pointwise-in-time estimate from
\autoref{lem:aux-space-lagrangian}(iv) with \(B=I_3\),
\[
\|Z(\tau)-I_3\|_{\rH^{1,q}}
+
\|J(\tau)-1\|_{\rH^{1,q}}
\le
C\|A(\tau)-I_3\|_{\rH^{1,q}}
\qquad\text{for all }\tau\in[0,t].
\]
Hence
\[
\|Z-I_3\|_{\rL^p(0,t;\rH^{1,q})}
+
\|J-1\|_{\rL^p(0,t;\rH^{1,q})}
\le
Ct^{1/p}\|A-I_3\|_{\rL^\infty(0,t;\rH^{1,q})}.
\]
Combining the \(\rL^p\)-estimate with the fractional seminorm estimate yields
\[
\|Z-I_3\|_{\rH^{\theta,p}(0,t;\rH^{1,q})}
+
\|J-1\|_{\rH^{\theta,p}(0,t;\rH^{1,q})}
\le
C\Bigl(
\|A-I_3\|_{\rH^{\theta,p}(0,t;\rH^{1,q})}
+
t^{1/p}\|A-I_3\|_{\rL^\infty(0,t;\rH^{1,q})}
\Bigr).
\]
This concludes the proof.
\end{proof}

\noindent 
{\bf Acknowledgements. }{\small All three authors acknowledge the support from the DFG project FOR~5528.}
\nocite{*}
\bibliographystyle{plain}
\bibliography{biblio}

\end{document}